\documentclass[12pt, a4paper]{amsart} 

\usepackage{mathptmx} 
\usepackage{amsfonts,amsthm,amssymb,dsfont,stmaryrd,mathrsfs} 
\usepackage[leqno]{amsmath} 
\usepackage{array}
\usepackage{colortbl}

\usepackage{graphicx,caption} 
\usepackage[pdfstartview=FitH]{hyperref} 
\usepackage{pdfpages} 
\usepackage{longtable,supertabular, multirow} 
\usepackage{doi}
\usepackage{tikz, tikz-cd}
\usetikzlibrary{matrix,arrows,decorations.pathmorphing}



\usepackage[hmarginratio={1:1}, vmarginratio={1:1}, top=3cm, bottom=2cm, left=3cm, right=3cm]{geometry} 



\usepackage{mathtools} 







\theoremstyle{plain}
\newtheorem*{thm*}{Theorem} 
\newtheorem{thm}{Theorem}[section] 
\newtheorem*{thm2*}{Theorem \ref{thmDecNilp}}
\newtheorem*{thm3*}{Theorem \ref{thmDecNilp2}}

\newtheorem{thmA}{Theorem}
\newtheorem{lem}[thm]{Lemma}
\newtheorem*{lem*}{Lemma}
\newtheorem{prop}[thm]{Proposition}
\newtheorem*{prop*}{Proposition}

\newtheorem{cor}[thm]{Corollary}
\newtheorem*{cor*}{Corollary}

\theoremstyle{definition}
\newtheorem{defn}[thm]{Definition}
\newtheorem*{defn*}{Definition}
\newtheorem{conjecture}{Conjecture}
\newtheorem*{conjecture*}{Conjecture}
\newtheorem*{conjecture3*}{Conjecture \ref{conjND}}
\newtheorem{exmp}[thm]{Example}
\newtheorem*{exmp*}{Example}

\newtheorem{prob}{Problem}
\newtheorem*{prob*}{Problem}

\newtheorem*{ques*}{Question}

\theoremstyle{remark}
\newtheorem{remk}[thm]{Remark}
\newtheorem*{remk*}{Remark}

\def\nn{\mathbb{N}}
\def\zz{\mathbb{Z}}
\def\qq{\mathbb{Q}}

\def\cc{\mathbb{C}}
\def\ff{\mathbb{F}}
\def\hh{{\mathbb H}}

\def\gcd{{\rm gcd}}



\def\supp{{\rm supp}}

\def\GL{{\rm GL}}

\def\Aut{{\rm Aut}}

\def\ord{{\rm ord}}

\def\Gal{{\rm Gal}}

\def\cen{{\rm Cen}}

\def\aug{{\rm aug}}
\def\ND{\textup{ND}} 
\def\SN{\textup{SN}}






\begin{document}

\title{Nilpotent Decomposition in Integral Group Rings}

\author{Eric Jespers}
\address{Department of Mathematics and Data Science, Vrije Universiteit Brussel, Pleinlaan 2, B-1050 Elsene, Belgium}
\email{eric.jespers@vub.be}

\author{Wei-Liang Sun}
\address{Department of Mathematics, National Taiwan Normal University, Taipei 11677, Taiwan, ROC}
\email{wlsun@ntnu.edu.tw}

\maketitle

\begin{abstract}
A finite group  $G$ is said to have the nilpotent decomposition property (ND)  if for every nilpotent element $\alpha $ of the integral group ring  $\zz[G]$ one has that $\alpha e$ also belong to $\zz[G]$, for every primitive central idempotent  $e$ of the rational group algebra $\qq[G]$. Results of Hales, Passi and Wilson, Liu and Passman show that this property is fundamental in the investigations of the multiplicative Jordan decomposition of integral group rings. If $G$ and all its subgroups have ND then Liu and Passman showed that $G$ has property SSN, that is, for subgroups $H$, $Y$ and $N$ of $G$, if $N\lhd H $ and $Y\subseteq H$ then $N\subseteq Y$ or $YN$ is normal in $H$; and such groups have been described.
In this article, we study the nilpotent decomposition property in integral group rings and we  classify finite SSN groups $G$ such that the rational group algebra $\qq[G]$ has only one Wedderburn component which is not a division ring.\\

Keywords: integral group ring, rational group algebra, nilpotent decomposition, multiplicative Jordan decomposition, Wedderburn component, Shoda pair, strong Shoda pair, SN group, SSN group.
\end{abstract}

\section{Introduction}
\label{Int}
Let $G$ be a finite group.
Because $\qq$ is a perfect field, every unit $\alpha \in \qq[G]$ has a unique multiplicative  Jordan decomposition,
that is $\alpha $ can be written uniquely as the product of a unit  $\alpha_s$ that is semisimple and  a commuting unipotent unit $a_u$. 
Following  \cite{HalPasWil}, one says  that the integral group ring $\zz[G]$ has the multiplicative Jordan decomposition property (MJD) (or simply, $G$ has (MJD) if for every unit of $\zz[G]$, its semisimple and unipotent parts are both contained in $\zz[G]$. 
By Maschke's Theorem, the semisimple algebra $\qq[G]=M_{n_1}(D_1) \times \cdots \times M_{n_m}(D_m)$, a direct product of matrices over skew fields $D_i$.
We call $n_1,\ldots , n_k$ the reduced degrees of $G$.
These matrix algebras we call the Wedderburn components of $\qq[G]$. 
Obviously, if $\qq[G]$ does not have nilpotent elements, i.e. when all $n_i=1$, then $G$ has (MJD). It is well-known that these groups are the abelian groups and the Hamiltonian groups of order $2^k t$ with $t$ an odd positive integer such that the multiplicative order of $2$ mod $t$ is odd. The (MJD) for $G$ has been intensively studied by Hales, Passi, Wilson, Liu and Passman and Kuo and Sun. In particular in  \cite{HalPasWil} it is shown that if $\qq[G]$ does have nonzero nilpotent elements and $G$ has (MJD) then $|G|=2^a3^b$ or $G=Q_8\times C_p$ or $G= C_p\rtimes C_n$ with $n=2^a$ or $n=3^b$ (here $C_n$ denotes a cyclic group of order $n$). Furthermore the reduced degrees of $G$ are at most $3$. Liu and Passman have  shown that the only non-abelian 3-groups with (MJD) are the groups of order 27. It remains an open problem to classify the finite groups that have (MJD).
 
In \cite{HalPasWil} another remarkable fact (and fundamental in the investigations) of groups $G$ with (MJD) has been shown: if $\alpha $ is a nilpotent element of $\zz[G]$ then all its components $\alpha e$ also belong to $\zz[G]$, where $e$ runs through the primitive central idempotents of $\qq[G]$. We call this property the  {\it nilpotent decomposition property} (ND for short) of $\zz[G]$ (or simply $G$). 
 In \cite[Proposition 2.5]{LiuPas}, C.-H. Liu and D.S. Passman  essentially proved that if $\zz[G]$ has ND, then $G$ is an SN group,
 that is for any subgroup $Y$ of $G$ and any normal subgroup $N$ of $G$  we have  $N\subseteq Y$ or $YN \lhd G$.
 A group is said to have SSN if all its subgroups have SN. Since the (MJD) property is inherited by subgroups, it follows that a finite group $G$ is an SSN group if $\zz[G]$ has (MJD).
 In \cite{LiuPas5}, Liu and Passman classified the finite SSN groups, and  in \cite{LiuPas,LiuPas2,LiuPas3,LiuPas4} they 
 classified the finite groups $G$  having (MJD) in case $G$ is a $3$-group or a  $\{2, 3\}$-group. Liu \cite{Liu} also gave a much simpler proof to determine $2$-groups $G$ such that $\zz[G]$ has (MJD).

In order to obtain a full classification of all finite groups $G$ with the (MJD) property it thus is crucial to study which finite SSN groups $G$ have the ND property.
This is the main aim of this paper. For a finite group $G$, we call a Wedderburn component of $\qq[G]$ a {\it matrix component} if it has reduced degree larger than $1$. Obviously, if $\qq[G]$ has at most one matrix component, then $G$ has ND. Finite groups $G$ such that $\qq[G]$  does not have a matrix component are
those for which $\qq[G]$ has no nilpotent elements and thus these are  well-known. 
It is natural to ask when $\qq[G]$ has precisely one matrix component. In this article, we classify finite SSN groups $G$ such that $\qq[G]$ has only one matrix component.
In  Theorem~\ref{thmDecNilp2}  we deal with the nilpotent groups and in Theorem~ \ref{thmDecNilp} we handle the groups that are not nilpotent. We also describe  the matrix component for each group in both theorems. In our investigations we did  not discover any finite group $G$ that has ND but has more than one matrix component and we actually expect that such groups 
do not exist.

The article is organised  as follows. We give basic requirements in Section~\ref{secPre} including knowledge of Shoda pair, strong Shoda pair and cyclic algebra. In Section~\ref{secII2}, the concepts of nilpotent decomposition, SN group and SSN group are introduced. Several examples are also given. Finite $p$-groups whose non-cyclic subgroups are normal are classified by Z. Bo\v{z}ikov and Z. Janko \cite{BozJan}. We use their result to classify rational group algebras of nilpotent SSN groups that have only one matrix component in Section~\ref{secII4}. In Section~\ref{secII3}, we classify rational group algebras of non-nilpotent SSN groups that have only one matrix component. For one type of such groups, we use S.A. Amitsur's work \cite{Ami} on the classification of finite groups that are embedded in division rings.

\section{Preliminaries}
\label{secPre}

Let $G$ be a finite group. For an element $\alpha = \sum_{g \in G} a_g g$ of $\qq[G]$, 
we denote by $\supp(\alpha) = \{g \in G \mid a_g \neq 0 \}$, the support of $\alpha$.
For a subgroup $H$  of $G$ we put $\widehat{H} = \sum_{h \in H} h \in \zz[G]$ and $\widetilde{H} = \frac{\widehat{H}}{|H|}$ an idempotent in $\qq[G]$. 
If $H=\langle h\rangle$, a cyclic group, then we simply denote $\widehat{H}$ and $\widetilde{H}$ as $\widehat{h}$ and $\widetilde{h}$.
For a normal subgroup $K$ of $H$, we set $$\varepsilon(H, K) = \left\{ \begin{array}{ll} \widetilde{H} & \text{if } H = K \\ \prod_{M/K \in \mathcal{M}(H/K)} (\widetilde{K} - \widetilde{M}) & \text{if } H \neq K , \end{array} \right.$$ where $\mathcal{M}(H/K)$ is the set of all minimal normal nontrivial subgroups of $H/K$. Clearly, $\varepsilon(H, K)$ is a central idempotent of $\qq[H]$. For $\alpha \in \qq[G]$, the {\it $G$-centralizer of $\alpha$} is the subgroup  $\cen_G(\alpha) = \{ g \in G \mid g \alpha = \alpha g\}$. If $T$ is a right transversal of $\cen_G(\varepsilon(H,K))$ in $G$ then  $$e(G, H, K) = \sum_{t \in T} \varepsilon(H, K)^t$$
is a central element of $\qq[G]$ (which is independent of the choice of  $T$).

Olivieri,  del R\'{i}o, and J.J. Sim\'{o}n in \cite{OliRioSim} showed that for a rich class of groups  (for example finite abelian-by-supersolvable groups) these elements (based on (strong) Shoda pairs $(H,K)$) are the primitive central idempotents of $\qq[G]$, hence obtaining a character free description of these elements. We will make intensively use of this and hence we recall the necessary background. For details and more work on this we refer the reader to \cite[Chapter~3]{JesRio}.

A pair $(H, K)$ of subgroups of a group $G$ is called a {\it Shoda pair} if it satisfies the following conditions: $K \trianglelefteq H$, $H / K$ is cyclic and, for every $g \in G \setminus H$, there exists $h \in H$ so that $(h, g) = h^{-1} g^{-1} h g \in H \setminus K$. A pair $(H, K)$ of subgroups of a group $G$ is called a {\it strong Shoda pair} if it satisfies the following conditions: $K \trianglelefteq H \trianglelefteq N_G(K)$, $H / K$ is cyclic and a maximal abelian subgroup of $N_G(K) / K$, and, for every $g \in G \setminus N_G(K)$, $\varepsilon(H,K) g^{-1} \varepsilon(H, K) g = 0$ (one says that the conjugates of the elements $\varepsilon(H,K)$ are orthogonal). It turns out that a pair of subgroups $(H,K)$ is a strong Shoda pair  if and only if it is a Shoda pair with $H \trianglelefteq N_G(K)$ and such that all distinct $G$-conjugates of $\varepsilon(H,K)$ are orthogonal. In this case it follows that  $\cen_G(\varepsilon(H, K)) = N_G(K)$ and $e(G, H, K)$ is a primitive central idempotent of $\qq[G]$.

In the case of finite metabelian groups $G$ the primitive central idempotents are determined by strong Shoda pairs as described in the following result (see \cite[Theorem 3.5.2]{JesRio}). 
This will be next widely used throughout the paper. Recall that a group $G$  is said to be metabelian if it contains a normal abelian subgroup $A$ so that $G/A$ also is abelian.

\begin{thm}[{\cite[Theorem 4.7]{OliRioSim}}]
\label{thmMetabelian}
Let $G$ be a metabelian finite group and let $A$ be a maximal abelian subgroup of $G$ containing $G'$. The primitive central idempotents of $\qq[G]$ are the elements of the form $e(G, H, K)$, where $(H, K)$ is a pair of subgroups of $G$ satisfying the following conditions\textup{:}
\begin{enumerate}
\item[(i)]
$H$ is a maximal element in the set $\{B \leq G \mid A \leq B \text{ and } B' \leq K \leq B\}$ and
\item[(ii)]
$H / K$ is cyclic.
\end{enumerate}
\end{thm}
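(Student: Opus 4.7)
The plan is to establish the classification in two directions: every pair $(H,K)$ satisfying (i) and (ii) is a strong Shoda pair of $G$, so the associated element $e(G,H,K)$ is automatically a primitive central idempotent of $\qq[G]$; and conversely, every primitive central idempotent of $\qq[G]$ arises in this way.

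For the forward direction, I would verify the strong Shoda conditions. Since $G' \le A \le H$, the quotient $G/H$ is abelian, hence $H \trianglelefteq G$ and in particular $H \trianglelefteq N_G(K)$. Cyclicity of $H/K$ is condition (ii). For the maximal-abelian requirement of $H/K$ inside $N_G(K)/K$, suppose $H \le B \le N_G(K)$ with $B/K$ abelian; then $B' \le K$ and $A \le H \le B$, so the maximality in (i) forces $B=H$. For orthogonality of distinct $G$-conjugates of $\varepsilon(H,K)$, normality of $H$ yields $\varepsilon(H,K)^g = \varepsilon(H,K^g)$; since $\varepsilon(H,K)$ is the primitive central idempotent of $\qq[H]$ attached to a faithful rational character of the cyclic group $H/K$, distinct $G$-conjugates are distinct primitive central idempotents of $\qq[H]$ and hence orthogonal. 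These checks also identify $\cen_G(\varepsilon(H,K))$ with $N_G(K)$, and the general strong Shoda theory then ensures that $e(G,H,K)$ is a primitive central idempotent of $\qq[G]$.

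For the reverse direction, I would apply Clifford theory to the normal abelian subgroup $A$. The primitive central idempotents of $\qq[A]$ are exactly the $\varepsilon(A,L)$ with $L \trianglelefteq A$ and $A/L$ cyclic, and every primitive central idempotent of $\qq[G]$ is a $G$-sum of conjugates of some such $\varepsilon(A,L)$, refined by the character-theoretic data of the inertia group $T=\cen_G(\varepsilon(A,L))$. From this data I would construct the pair $(H,K)$ by taking $H$ maximal with $A \le H$ and $H' \le K \le H$ for an appropriate $K \trianglelefteq H$ extending $L$; cyclicity of $H/K$ would follow because maximality forces $H/K$ to be a maximal abelian subgroup of the inertia quotient, which embeds in the multiplicative group of the corresponding Wedderburn division factor and so must be cyclic as a finite abelian subgroup of a division ring.

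The main obstacle is this reverse direction: translating the Clifford-theoretic parametrization into combinatorial pairs $(H,K)$ satisfying (i) and verifying that the two constructions are mutually inverse up to $G$-equivalence on pairs. A cleaner alternative is to show directly that $\sum e(G,H,K) = 1$, summed over $G$-equivalence classes of pairs satisfying (i) and (ii); this identity would follow from summing over $G$-orbits of primitive central idempotents of $\qq[A]$ and applying Frobenius reciprocity, bypassing the need to identify each primitive central idempotent individually.
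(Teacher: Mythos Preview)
The paper does not prove this theorem; it is quoted verbatim from \cite[Theorem~4.7]{OliRioSim} and used as a black box throughout. There is therefore no proof in the paper to compare your proposal against. The only remark the paper adds is the sentence immediately following the statement: that $A$ may be chosen arbitrarily and that every such pair $(H,K)$ is in fact a strong Shoda pair of $G$.

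Your forward direction---showing that conditions (i) and (ii) force $(H,K)$ to be a strong Shoda pair---is correct and essentially is that remark. Your reverse direction, as you yourself note, is only a plan: the Clifford-theoretic translation from primitive central idempotents to pairs $(H,K)$ satisfying (i) is genuinely the nontrivial content of the Olivieri--del R\'{i}o--Sim\'{o}n result, and neither of your two suggested routes (explicit reconstruction of $(H,K)$, or a direct proof that $\sum e(G,H,K)=1$) is carried out. If you want to actually write a proof, you will need to go to the original source or to \cite[Theorem~3.5.12]{JesRio}, where the argument is given in full; the paper under review simply does not contain one.
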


Note that in Theorem~\ref{thmMetabelian} the subgroup $A$ can be arbitrarily chosen and every pair $(H,K)$ is a strong Shoda pair of $G$.

For a strong Shoda pair $(H,K)$ of a finite group one can also give a description  of the simple Wedderburn component $\qq[G] e(G,H,K)$. This will play an important role in the proofs of our results. Hence, we need to describe the statement in detail and therefore we  have to recall some notation (see  \cite[Section 2.6]{JesRio} and also 
\cite[Section~5]{Pas2}).

Let $R$ be an associative ring with identity and $G$ a group. The multiplicative group of invertible elements of $R$ is denoted $\mathcal{U}(R)$. A {\it crossed product} $R*G$ of   $G$ over $R$ is  an associative ring which has a set of invertible elements $\{u_g: g \in G\}$, a copy of $G$, such that $R*G = \oplus_{g \in G} R u_g$ is a free left $R$-module. The multiplication is determined by the following rules: $u_g r = \alpha_g(r) u_g$  and $u_g u_h = f(g,h) u_{g h}$, for $g, h \in G$, $r \in R$, where $\alpha: G \to \Aut(R)$ and  $f: G \times G \to \mathcal{U}(R)$ are maps and $\alpha_g = \alpha(g)$ (called respectively the {\it action} and the  {\it twisting} of the crossed product). The associativity of $R*G$ is equivalent to the assertions that, for all $g, h, k \in G$, we have $\alpha_g \alpha_h = \iota_{f(g, h)} \alpha_{g h}$  and $f(g, h) f(g h, k) = \alpha_g (f(h, k)) f(g, h k)$, where $\iota_u$ denotes the inner automorphism of $R$ defined by $\iota_u(x) = u x u^{-1}$ for $u \in \mathcal{U}(R)$. We also denote $R*G$ as $(R, G, \alpha, f)$.

Group rings are obvious examples of crossed products;  here the action and twisting are trivial (i.e.  $\alpha_g = 1$ and $f(g, h) = 1$ for all $g, h \in G$).
Another well-known example is a {\it classical crossed product}  $(E/F, f)$, where $E / F$ is a finite Galois extension and $G = \Gal(E/F)$. Here 
we have a natural action  $\alpha: G \to \Aut(E)$ (we identify $\alpha(\sigma)$ and  $\sigma$) and $f : G \times G \to \mathcal{U}(E)$ satisfies  $f(\sigma, \tau) f(\sigma \tau, \rho) = \alpha(\sigma) ( f(\tau, \rho)) f(\sigma, \tau \rho)$, for $\sigma, \tau, \rho \in G$.
If $G = \langle \sigma \rangle$, a cyclic group, say of order $n$, then one can take for $f$ the mapping defined by $f(\sigma^i, \sigma^j) = 1$ if $i+j<n$ and  $f(\sigma^i, \sigma^j) = a$ otherwise, where $0 \leq i, j < n$ and  $a \in \mathcal{U}(F)$. A crossed product of this type is called a {\it cyclic algebra} and  is denoted by $(E/F, \sigma, a)$, or simply as $(E, \sigma, a)$ or $(E/F, a)$ if $\sigma$ is clear from the context.

Recall that an $F$-algebra is called a {\it central algebra} if its center is exactly the field $F$. For our investigations we will make use of the following well-known properties (see for example {\cite[Theorem~2.6.3 and Proposition~2.6.7]{JesRio}}).

\begin{prop} $\;$
\label{propCyclicAlg}
\begin{enumerate}
\item[(i)]
Every classical crossed product $(E/F, f)$ is a finite dimensional central simple $F$-algebra.
\item[(ii)]
A cyclic algebra $(E/F, \sigma, a) \simeq M_n(F)$ if and only if $a \in N_{E/F}(E)$ where $n = |\Gal(E/F)|$ and $N_{E/F}(E)$ consists elements of the norm $N_{E/F}(x) = \prod_{\sigma\in \Gal(E/F)} \sigma(x)$ for $x \in E$. In particular, $(E/F, \sigma, 1) \simeq M_n(F)$. 
\end{enumerate}
\end{prop}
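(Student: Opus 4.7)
The plan is to treat (i) and (ii) separately; both are classical facts about crossed products that reduce, once the bookkeeping is set up, to linear independence of Galois automorphisms and a dimension count.

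For (i), writing $A = (E/F, f)$ and $G = \Gal(E/F)$, the decomposition $A = \bigoplus_{\sigma \in G} E u_\sigma$ immediately yields $\dim_F A = [E:F]^2$. For centrality, I would take $z = \sum_\sigma c_\sigma u_\sigma \in Z(A)$; commutation with every $r \in E$ gives $c_\sigma(r - \sigma(r)) = 0$, forcing $c_\sigma = 0$ for $\sigma \neq 1$, and then commutation with each $u_\tau$ gives $c_1 \in E^G = F$. For simplicity, a standard minimal-support argument does the job: pick a nonzero $x = \sum_{\sigma \in S} c_\sigma u_\sigma$ in a nonzero two-sided ideal $I$ with $|S|$ minimal; if $|S| \geq 2$, choose $\sigma \neq \tau$ in $S$ and $r \in E$ with $\sigma(r) \neq \tau(r)$, so $xr - \sigma(r) x \in I$ strictly shrinks the support while keeping the $u_\tau$-coefficient nonzero, a contradiction. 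Hence $|S| = 1$, $x$ is a unit, and $I = A$.

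For one direction of (ii), assume $a = N_{E/F}(b)$ with $b \in E^*$ and set $v = b^{-1} u_\sigma$ in $(E/F, \sigma, a)$. A direct computation gives $v r v^{-1} = \sigma(r)$ and $v^n = N_{E/F}(b^{-1}) \cdot a = 1$, so $(E/F, \sigma, a) \simeq (E/F, \sigma, 1)$. The latter acts on $E$ (as an $F$-vector space of dimension $n$) with $E$ acting by multiplication and $u_\sigma$ acting as $\sigma$; the resulting $F$-algebra map to $\End_F(E) = M_n(F)$ is nonzero, hence injective by (i), and a dimension count completes the isomorphism.

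The converse is the only real hurdle. Assume $(E/F, \sigma, a) \simeq M_n(F) = \End_F(V)$ with $V$ the simple module. By the Skolem--Noether theorem, applied to the two $F$-algebra embeddings of $E$ into $M_n(F)$ (the one inherited from the cyclic algebra and the left regular representation on $V \simeq E$), I may conjugate so that $E$ acts on $V \simeq E$ by left multiplication. Letting $U \in \End_F(E)$ be the image of $u_\sigma$, the relation $U r U^{-1} = \sigma(r)$ forces $U(x) = \sigma(x) \cdot b$ with $b = U(1) \in E^*$, and a short induction yields $U^n(1) = \prod_{j=0}^{n-1} \sigma^j(b) = N_{E/F}(b)$. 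Since $U^n = a$ acts on $V$ as the scalar $a$, we conclude $a = N_{E/F}(b)$. Thus Skolem--Noether is the technical tool that extracts the norm identity from an abstract splitting; the remainder is mechanical.
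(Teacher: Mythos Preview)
Your argument is correct and follows the standard textbook route: Dedekind independence of Galois automorphisms for centrality and simplicity in (i), and for (ii) the rescaling $v = b^{-1}u_\sigma$ plus Skolem--Noether to extract the norm identity from an abstract splitting. One tiny point worth making explicit in the converse of (ii): you need that the simple $M_n(F)$-module $V$ has $F$-dimension $n$ (which it does, since $\dim_F A = n^2$), so that $V$ and $E$ are isomorphic as $F$-vector spaces and the second embedding of $E$ by left multiplication makes sense; after that, your application of Skolem--Noether is clean.

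As for comparison with the paper: the paper does not give its own proof of this proposition. It is stated as a well-known fact with a reference to \cite[Theorem~2.6.3 and Proposition~2.6.7]{JesRio}, and then used as a black box in later computations (e.g.\ Lemma~\ref{lemDecNilpBJ1} and Example~\ref{exmpSSN2}). So there is nothing to compare your approach against; you have supplied a self-contained proof where the paper simply cites the literature.
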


We are now in a position to describe the  Wedderburn component associated to a strong Shoda pair (see for example Theorem 3.5.5 in \cite{JesRio}). By $\xi_n$ we denote  a primitive $n$-th root of $1$ in $\cc$.

\begin{prop}[{\cite[Proposition 3.4]{OliRioSim}}]
\label{propSShodaPair}
Let $(H, K)$ be a strong Shoda pair of a finite group $G$. Let $h = [H:K]$, $N = N_G(K)$, $n = [G: N]$, $H / K = \langle x \rangle$. Denote the (conjugation) action of $N / H \simeq (N / K) / (H / K)$ on $H / K$ by $(x^i)^a$ for $a \in N / H$ and $x^i \in H/K$. Then $$\qq[G] e(G, H, K) \simeq M_n(\qq(\xi_h) * N/H),$$ where the action and twisting are given by 
   $$\begin{array}{lll} \text{action\textup{:}} & \xi_h^a = \xi_h^i & \text{if } x^a = x^i  ,\\ 
                                  \text{twisting\textup{:}} & f(a, b) = \xi_h^j & \text{if } a' b' = x^j (a b)'   ,\end{array}$$ 
for $a, b \in N / H$ and integers $i, j$, and  where $a' , b', (ab)'$ are fixed preimages of $a, b, ab$ under the natural homomorphism $N/K \to N/H$.
\end{prop}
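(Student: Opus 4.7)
My plan is to pass from $\qq[G] e(G,H,K)$ to $\qq[N]\varepsilon(H,K)$ via a matrix algebra identification, and then to exhibit the latter algebra as a crossed product of $N/H$ over $\qq(\xi_h)$. The backbone of the argument is the structural content of the strong Shoda pair hypothesis, namely $\cen_G(\varepsilon(H,K)) = N$ together with the orthogonality of the distinct $G$-conjugates of $\varepsilon(H,K)$.

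First, fix a right transversal $T = \{t_1, \ldots, t_n\}$ of $N$ in $G$ with $t_1 = 1$ and set $e_i = t_i^{-1} \varepsilon(H,K) t_i$, so that $e(G,H,K) = e_1 + \cdots + e_n$ is a decomposition into pairwise orthogonal idempotents. I would then introduce the candidate matrix units $E_{ij} = t_i^{-1}\varepsilon(H,K) t_j$ in $\qq[G] e(G,H,K)$. Because $t_j t_k^{-1} \notin N$ whenever $j\neq k$, the strong Shoda property $\varepsilon(H,K) g \varepsilon(H,K) = 0$ for $g \notin N$ gives $E_{ij} E_{kl} = \delta_{jk} E_{il}$, so the $E_{ij}$ really are matrix units. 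Combining the decomposition $\qq[G] = \bigoplus_{t \in T} \qq[N] t$ with the same orthogonality shows that the diagonal block satisfies $E_{11} \qq[G] E_{11} = \varepsilon(H,K)\qq[N]\varepsilon(H,K) = \qq[N]\varepsilon(H,K)$, where the second equality uses that $\varepsilon(H,K)$ is central in $\qq[N]$. This yields the $\qq$-algebra isomorphism $\qq[G] e(G,H,K) \simeq M_n(\qq[N]\varepsilon(H,K))$.

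Next I would realize $\qq[N]\varepsilon(H,K)$ as a crossed product. Since $H \trianglelefteq N$ and $\varepsilon(H,K)$ is central in $\qq[N]$, choosing lifts $\{a' : a \in N/H\}$ of $N/H$ in $N$ exhibits $\qq[N]\varepsilon(H,K)$ as a free left $\qq[H]\varepsilon(H,K)$-module with basis $\{a'\varepsilon(H,K)\}_{a\in N/H}$. The subalgebra $\qq[H]\varepsilon(H,K)$ is the primitive component of $\qq[H]$ corresponding to faithful characters of the cyclic quotient $H/K = \langle x\rangle$ of order $h$, so $\qq[H]\varepsilon(H,K) \simeq \qq(\xi_h)$ via $x\varepsilon(H,K) \mapsto \xi_h$. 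For the action, conjugation by a lift $a'$ sends $x\bmod K$ to $x^i\bmod K$ whenever $x^a = x^i$ in $H/K$, which translates under the above isomorphism into $\xi_h \mapsto \xi_h^i$. For the twisting, the identity $a' b' = x^j (ab)'$ in $N/K$ gives $(a'\varepsilon(H,K))(b'\varepsilon(H,K)) = \xi_h^j (ab)' \varepsilon(H,K)$, so $f(a,b) = \xi_h^j$. Associativity of this crossed product is automatic since it is inherited from the associative algebra $\qq[N]\varepsilon(H,K)$.

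The principal obstacle is the matrix reduction step; in particular, one has to use both parts of the strong Shoda condition carefully to verify the matrix unit relations and to identify the diagonal block with $\qq[N]\varepsilon(H,K)$. Once this is in place, realising $\qq[N]\varepsilon(H,K)$ as $\qq(\xi_h) \ast N/H$ and pinning down the formulas for action and twisting amount to routine bookkeeping with coset representatives.
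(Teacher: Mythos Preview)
Your argument is correct and is precisely the standard proof of this result. The paper does not supply its own proof of Proposition~\ref{propSShodaPair}; it is quoted from \cite[Proposition~3.4]{OliRioSim} (see also \cite[Theorem~3.5.5]{JesRio}), and your two-step outline---building matrix units $E_{ij}=t_i^{-1}\varepsilon(H,K)t_j$ from the orthogonality of the $G$-conjugates of $\varepsilon(H,K)$ to obtain $\qq[G]e(G,H,K)\simeq M_n(\qq[N]\varepsilon(H,K))$, and then reading off the crossed product $\qq[N]\varepsilon(H,K)\simeq \qq(\xi_h)\ast N/H$ via the identification $x\varepsilon(H,K)\mapsto \xi_h$---matches the approach in those references.
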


To investigate the ND property we only need to consider the Wedderburn components of reduced degree larger than $1$, in particular they need to be non-commutative.
The following is an easy criterium to determine these in terms of strong Shoda pairs.

\begin{lem}
\label{lemSSPcomm}
Let $(H,K)$ be a strong Shoda pair of $G$ and thus  $e(G,H,K)$ is primitive central idempotent of $\qq[G]$. Then, the Wedderburn component $\qq[G] e(G,H,K)$ is commutative if and only if $H = G$ if and only if $K$ contains the commutator subgroup $G'$.
\end{lem}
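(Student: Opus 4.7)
The plan is to apply Proposition~\ref{propSShodaPair}, which gives $\qq[G] e(G,H,K) \simeq M_n(\qq(\xi_h) * N/H)$ with $N = N_G(K)$, $n = [G:N]$ and $h = [H:K]$, and to analyze when the right-hand side is commutative.

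The matrix ring $M_n(\qq(\xi_h) * N/H)$ is commutative if and only if $n = 1$ and the crossed product $\qq(\xi_h) * N/H$ is commutative. The condition $n = 1$ is equivalent to $N_G(K) = G$. For the crossed product, the key input is the maximal abelian condition in the definition of a strong Shoda pair: if some coset $a \in (N/K) \setminus (H/K)$ centralized $H/K$, then $\langle a, H/K \rangle$ would be an abelian subgroup of $N/K$ strictly larger than $H/K$, a contradiction. Hence whenever $N/H$ is non-trivial, any non-trivial coset $a \in N/H$ acts on $H/K = \langle x \rangle$ by $x \mapsto x^i$ with $i \not\equiv 1 \pmod{h}$, and by the action formula in Proposition~\ref{propSShodaPair} we get $\xi_h^a = \xi_h^i \neq \xi_h$. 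In particular $u_a \xi_h = \xi_h^i u_a \neq \xi_h u_a$ in the crossed product, so the latter is non-commutative. Conversely, if $N/H$ is trivial, the crossed product reduces to $\qq(\xi_h)$, which is commutative. Combining, $\qq[G] e(G,H,K)$ is commutative if and only if $N = G$ and $H = N$, equivalently $H = G$.

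For the second equivalence $H = G \Leftrightarrow G' \leq K$: if $H = G$, then $K \trianglelefteq G$ by the strong Shoda pair axioms, and $G/K = H/K$ is cyclic, hence abelian, so $G' \leq K$. Conversely, if $G' \leq K$, then $K \trianglelefteq G$ (since $[K, G] \leq G' \leq K$), so $N_G(K) = G$, and $G/K$ is abelian; since $H/K$ is a maximal abelian subgroup of the abelian group $G/K$, we must have $H/K = G/K$, i.e., $H = G$. The only non-formal step in the whole argument is the use of the maximal abelian hypothesis to force non-triviality of the crossed product action whenever $N \neq H$; once that is in place the rest is routine.
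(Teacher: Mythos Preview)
Your proof is correct and follows essentially the same approach as the paper: both invoke Proposition~\ref{propSShodaPair} for the first equivalence (you spell out why a non-trivial $N/H$ forces a non-trivial action via the maximal abelian condition, whereas the paper summarizes this in one clause), and both handle $H=G \Leftrightarrow G'\leq K$ by the same normality/maximal-abelian argument.
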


\begin{proof}
By \cite[Proposition 3.4]{OliRioSim} (see also Proposition~\ref{propSShodaPair}), $\qq[G] e(G,H,K)$ is commutative if and only if $H = G$ (for the ``only if'' direction, one has that $H / K$ is central in $N_G(K) / K$ and $N_G(K) = G$). If $H = G$, then $G' = H' \subseteq K$ because $H / K$ is abelian. If $G' \subseteq K$, then $N_G(K) = G$ and $G / K$ is abelian. Hence $H = G$ because $H / K$ is maximal abelian in $N_G(K) / K = G / K$. The result follows.
\end{proof}


\section{Nilpotent Decomposition Property, \textup{SN} and \textup{SSN} Groups}
\label{secII2}

Let $G$ be a finite group. If   $\qq[G] = A_1 \oplus A_2 \oplus \cdots \oplus A_m$, where each $A_i \simeq M_{s_i}(D_i)$ for some division ring $D_i$, the Wedderburn decomposition, then every nilpotent element $\alpha$ of $\zz[G]$ has the Wedderburn decomposition $\alpha = \sum_i \alpha_i$, where $\alpha_i$ is a nilpotent element of $A_i$. In general $\alpha_i$ does not have to belong to $\zz [G]$.  Therefore the following definition.

\begin{defn}
\label{defnND}
A nilpotent element $\alpha$ of $\zz[G]$ is said to have the {\it nilpotent decomposition property} (ND) if  $\alpha e \in \zz[G]$ for every central idempotent $e \in \qq[G]$. An integral group ring $\zz[G]$, or simply  the group $G$, is said to have the {\it nilpotent decomposition property} (ND) if every nilpotent element of $\zz[G]$ has ND. 
\end{defn}

For two relatively prime positive integers we denote by $\ord_n(m)$ the multiplicative order of $m$ modulo $n$. Clearly, we have the following observation.
Recall that we say that a Wedderburn component of  $\qq[G]$ is a {\it matrix component} if it has reduced degree larger than one.

\begin{lem}
\label{lemOneMatrix}
For a finite group $G$, if $\qq[G]$ has at most one Wedderburn component which is a matrix component then $\zz[G]$ has \ND{}. Thus, the following Dedekind groups have \textup{ND:}
\begin{enumerate}
\item[(i)]
abelian groups\textup{;}
\item[(ii)]
$Q_8 \times E \times A$ where $E$ is an elementary abelian $2$-group and $A$ is an abelian group of odd order $m$ such that $\ord_2(m)$ is odd\textup{;}
\item[(iii)]
$Q_8 \times C_p$ where $p$ is an odd prime such that $\ord_2(p)$ is even.
\end{enumerate}
In particular, $\qq[G]$ does not have  any matrix component for groups $G$ of types \textup{(i)} and \textup{(ii)}.
\end{lem}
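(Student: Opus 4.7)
The plan is to dispose of the three parts in sequence.

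For the principal claim, decompose $\qq[G] = A_1 \oplus \cdots \oplus A_m$ into Wedderburn components with primitive central idempotents $e_1,\ldots,e_m$. A division ring has no nonzero nilpotent elements, so any nilpotent $\alpha \in \zz[G]$ satisfies $\alpha e_i = 0$ whenever $A_i$ is not a matrix component. Under the hypothesis at most one $A_{i_0}$ is a matrix component, so $\alpha = \alpha e_{i_0}$ (possibly $0$). An arbitrary central idempotent $e \in \qq[G]$ is a sum $\sum_{i \in S} e_i$ over some $S \subseteq \{1,\ldots,m\}$, whence $\alpha e$ equals $\alpha$ if $i_0 \in S$ and $0$ otherwise; in either case $\alpha e \in \zz[G]$, giving \ND{}.

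For parts (i) and (ii) I would verify the stronger fact that $\qq[G]$ has no matrix component at all, so $\qq[G]$ contains no nonzero nilpotents and \ND{} is vacuous. Part (i) is immediate since $\qq[G]$ is a direct sum of cyclotomic fields. For part (ii) this is precisely the classical description of the finite groups whose rational group algebra is free of nonzero nilpotent elements, already recalled in the Introduction: the Wedderburn components of $\qq[Q_8 \times E \times A]$ are subfields of cyclotomic fields or rational quaternion algebras $\hh(\qq(\xi_d))$ for divisors $d$ of $|A|$, and the hypothesis $\ord_2(m)$ odd (which forces $\ord_2(d)$ odd for every such $d$) is exactly what keeps each $\hh(\qq(\xi_d))$ a division ring.

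For part (iii) an explicit decomposition suffices. Using the primitive central idempotents $\widetilde{C_p}$ and $1 - \widetilde{C_p}$ of $\qq[C_p]$,
\[\qq[Q_8 \times C_p] \simeq \qq[Q_8] \oplus \bigl(\qq[Q_8] \otimes_\qq \qq(\xi_p)\bigr) \simeq \qq^{4} \oplus \hh(\qq) \oplus \qq(\xi_p)^{4} \oplus \hh(\qq(\xi_p)).\]
All summands but the last are either commutative or a division ring, so $\hh(\qq(\xi_p))$ is the only candidate matrix component, and it becomes $M_2(\qq(\xi_p))$ exactly when the Hamilton quaternion algebra splits over $\qq(\xi_p)$. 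The step I expect to be the main obstacle is justifying the classical splitting criterion: $\hh(\qq(\xi_p)) \simeq M_2(\qq(\xi_p))$ iff $\ord_2(p)$ is even. I would handle this by a local--global argument: $\qq(\xi_p)$ is totally complex for odd $p$, so the infinite places contribute nothing; at finite odd primes $\hh$ is already split over $\qq$; and at the primes above $2$ the local extension degree equals $\ord_2(p)$, so the nontrivial invariant $1/2$ of $\hh(\qq_2)$ restricts to $\ord_2(p)/2 \pmod{\zz}$, which is trivial iff $\ord_2(p)$ is even. With this in hand, $\qq[Q_8 \times C_p]$ has exactly one matrix component, and the first part of the lemma finishes the proof.
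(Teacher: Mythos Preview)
Your argument is correct. The paper itself does not prove this lemma: it is stated as an ``observation'' and left without proof, the authors treating all three items as well known. Your write-up supplies exactly the details the paper suppresses.

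A few remarks on the comparison. For the principal claim, your reduction $\alpha=\alpha e_{i_0}$ is precisely the intended (trivial) reasoning. For (i) and (ii), the paper simply invokes in the Introduction the classical characterization of finite groups whose rational group algebra has no nonzero nilpotent elements; your explicit identification of the Wedderburn components as cyclotomic fields and quaternion algebras $\hh(\qq(\xi_d))$, together with the divisibility argument showing $\ord_d(2)\mid\ord_m(2)$, makes this self-contained. For (iii), the paper later (in the proof of Lemma~\ref{lemDecNilpHam}) records the decomposition $\qq[Q_8\times C_p]\simeq 4\qq\oplus\hh_\qq\oplus 4\qq(\xi_p)\oplus M_2(\qq(\xi_p))$ without justifying the splitting of $\hh(\qq(\xi_p))$; your Brauer-group/local--global computation that the invariant at a prime above $2$ is $\ord_p(2)/2\pmod{\zz}$ is the standard argument (cf.\ Moser or \cite[p.~153]{GiaSeh}) and is exactly what is needed.

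One notational caveat: the paper defines $\ord_n(m)$ as the order of $m$ modulo $n$, so the symbols $\ord_2(m)$ and $\ord_2(p)$ in the lemma statement are evidently misprints for $\ord_m(2)$ and $\ord_p(2)$ (compare the Introduction and Lemma~\ref{lemDecNilpHam}). You have interpreted them correctly.
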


All the groups of order at most $11$ satisfy the hypothesis of Lemma~\ref{lemOneMatrix} and hence they have property ND. Also the alternating group $A_4$ of degree 4 and the generalized quaternion group $Q_{12}$ of order $12$ satisfy the hypothesis of Lemma~\ref{lemOneMatrix}, and thus they have ND, since 
$\qq[A_4] \simeq \qq \oplus \qq(\xi_3) \oplus M_3(\qq)$ and 
$\qq[Q_{12}] \simeq 2 \qq \oplus \qq(\sqrt{-1}) \oplus \left( \frac{-3, -1}{\qq} \right) \oplus M_2(\qq)$, where $\left( \frac{-3, -1}{\qq} \right)$ is the quaternion division algebra over $\qq$ defined by $-3$ and $-1$  (see for example \cite[Example 2.1.7(3)]{JesRio}). The group $D_{12} = \langle a, b \mid a^6 = b^2 = 1, b a b^{-1} = a^{-1} \rangle$, the dihedral group of order 12, does not have ND. Indeed, 
 $\alpha = (1-b) a (1+b) \in \zz[D_{12}]$ is a nilpotent element and $e = \widetilde{a^3} - \widetilde{a} \in \qq[D_{12}]$ is a central idempotent but $\alpha e = \frac{1}{2} (1+a^3) (a - a^{-1}) (1 + b) \not\in \zz[D_{12}]$.

Another useful observation os the following.

\begin{lem}[{\cite[Corollary 12 and Corollary 13]{HalPasWil}}]
\label{lemDecNilp}
Let $N$ be a nontrivial normal subgroup of  a nonabelian finite group $G$. Assume that $G$ has \ND{}. If $H$ is a subgroup of $G$ such that $H \cap N = 1$, then $\qq[H]$ has no nonzero nilpotent elements.
In particular, if $H$ is a finite group such that $\qq[H]$ has a nonzero nilpotent element, then the group $H \times N$ does not have \textup{ND} for any nontrivial finite group $N$.
\end{lem}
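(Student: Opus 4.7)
My plan is to contradict the \ND{} property by producing an explicit nilpotent element of $\zz[G]$ which, when multiplied by the central idempotent $\widetilde{N}$, falls outside $\zz[G]$. Since $\widetilde{N}$ is a sum of primitive central idempotents, this will force some individual summand to violate the defining condition of \ND{}.

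First, suppose for contradiction that $\qq[H]$ contains a nonzero nilpotent element. Clearing denominators produces a nonzero nilpotent $\beta_0 \in \zz[H]$. If every coefficient of $\beta_0$ is divisible by $|N|$, I would replace $\beta_0$ by $\beta_0/|N|$, which is still a nonzero nilpotent in $\zz[H]$, and iterate. This process must terminate (for instance, the sum of the $p$-adic valuations, over primes $p$ dividing $|N|$, of the coefficients strictly decreases), producing $\beta \in \zz[H] \subseteq \zz[G]$ nonzero nilpotent in which at least one coefficient $c$ is \emph{not} divisible by $|N|$.

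Next, view $\beta$ as an element $\alpha \in \zz[G]$ and consider $\alpha\widetilde{N} = \tfrac{1}{|N|}\beta\widehat{N}$. Because $H \cap N = 1$, the map $H \times N \to HN$, $(h,n) \mapsto hn$, is a bijection onto its image, so every element of $\supp(\beta)\cdot N$ has a unique expression $hn$ with $h \in \supp(\beta)$ and $n \in N$. Consequently $\beta\widehat{N}$ has integer coefficients, and in particular the coefficient of $h_0 \cdot 1 = h_0$ in $\beta\widehat{N}$ equals $c$. Therefore the coefficient of $h_0$ in $\alpha\widetilde{N}$ is $c/|N| \notin \zz$, so $\alpha\widetilde{N} \notin \zz[G]$. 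Since $\widetilde{N}$ is a central idempotent of $\qq[G]$, it is a sum of primitive central idempotents $\widetilde{N} = e_1 + \cdots + e_r$; if every $\alpha e_i$ lay in $\zz[G]$, their sum $\alpha\widetilde{N}$ would too. Thus some $\alpha e_i \notin \zz[G]$, contradicting the \ND{} hypothesis for $G$. The ``in particular'' part then follows by applying the first assertion to $G = H \times N$, with the two factors regarded as normal subgroups meeting trivially.

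The only step I expect to require any care is the reduction in the second paragraph, to arrange that $|N|$ fails to divide some coefficient of $\beta$; this is purely a scaling argument but is what makes the use of the non-primitive idempotent $\widetilde{N}$ legitimate. Everything else reduces to the observation that $\widehat{N}$ is central, that $(\beta\widehat{N})^k = \beta^k \widehat{N}^k = 0$ guarantees nilpotence is preserved, and that unique factorization in $HN$ gives a transparent formula for the coefficients of $\beta\widehat{N}$.
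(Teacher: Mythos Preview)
Your argument is correct and follows the same approach as the paper: pick a nilpotent in $\zz[H]\setminus |N|\,\zz[H]$, use $H\cap N=1$ to see that the cosets $hN$ are disjoint so no cancellation occurs in $\beta\widehat{N}$, and conclude that $\widetilde{N}\beta\notin\zz[G]$. Your final step decomposing $\widetilde{N}$ into primitive central idempotents is unnecessary, since Definition~\ref{defnND} already quantifies over all central idempotents, but it does no harm.
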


\begin{proof}
Consider the central idempotent  $e =\widetilde{N}=\frac{1}{|N|}\sum_{n\in N}n\in \qq[G]$. If $\qq[H]$ has some nonzero nilpotent element, then there exists a nilpotent element $\alpha$ in $\zz[H] \setminus |N| \zz[H]$. However, $H \cap N = 1$ implies that $N h \cap N h' = \varnothing$ for $h \neq h' \in H$ and thus it easily follows that $e \alpha \not\in \zz[G]$, a contradiction.
\end{proof}

The aim is to describe the  finite groups that have property  ND. Note that we have a natural way to find nilpotent elements in $\zz[G]$, namely $$(1-y) g \widehat{Y} \quad \text{and} \quad \widehat{Y} g (1-y),$$ for every subgroup $Y$ of $G$, $y \in Y$ and $g \in G$. Using these nilpotent elements, 
one shows that the ND property leads to the class of groups with property SN, as introduced by Liu and Passman.

\begin{prop}[{\cite[Proposition 2.5]{LiuPas}}]
\label{propDecNilp}
Let $G$ be  a finite group. If $G$ has property \ND{} then $G$ satisfies property \SN, that is, 
if  $N \trianglelefteq G$ and  $Y\leq G$, then  $Y \supseteq N$ or $Y N \trianglelefteq G$.
\end{prop}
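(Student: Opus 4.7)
The plan is to exploit the natural family of nilpotent elements $\alpha_{y,g} := (1-y)\,g\,\widehat{Y} \in \zz[G]$ for $y \in Y$ and $g \in G$; these square to zero because $\widehat{Y}(1-y) = \widehat{Y} - \widehat{Y}y = 0$ whenever $y \in Y$. Let $N \trianglelefteq G$, $Y \leq G$, and assume $N \not\subseteq Y$. I will test the ND hypothesis on each $\alpha_{y,g}$ against the central idempotent $e = \widetilde{N} = \frac{1}{|N|}\widehat{N}$ of $\qq[G]$, which is central precisely because $N$ is normal.

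The key computation is $\widehat{Y}\widehat{N}$. Since $N \trianglelefteq G$, the product $YN$ is a subgroup, and a short counting argument shows that each element of $YN$ has exactly $|Y \cap N|$ expressions as a product $y'n$ with $y' \in Y$, $n \in N$, hence $\widehat{Y}\widehat{N} = |Y\cap N|\,\widehat{YN}$. Therefore
\begin{equation*}
\alpha_{y,g}\,e \;=\; \frac{|Y\cap N|}{|N|}(1-y)g\widehat{YN} \;=\; \frac{1}{[N : Y\cap N]}\bigl(g\widehat{YN} - yg\widehat{YN}\bigr).
\end{equation*}
The cosets $gYN$ and $ygYN$ are either equal, in which case $\alpha_{y,g}e = 0 \in \zz[G]$, or disjoint, in which case the nonzero coefficients of $\alpha_{y,g}e$ are all $\pm 1/[N:Y\cap N]$. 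The hypothesis $N \not\subseteq Y$ forces $[N:Y\cap N] > 1$, so the integrality of $\alpha_{y,g}e$ required by ND forces $gYN = ygYN$ for every $y \in Y$ and every $g \in G$.

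The equation $gYN = ygYN$ is equivalent to $g^{-1}yg \in YN$, so that $g^{-1}Yg \subseteq YN$ for every $g \in G$; replacing $g$ by $g^{-1}$ gives $gYg^{-1} \subseteq YN$ as well. Combining this with $gNg^{-1} = N$ yields $g(YN)g^{-1} = (gYg^{-1})N \subseteq YN$ for all $g$, so $YN \trianglelefteq G$, which is the SN conclusion. The only delicate step is the coefficient bookkeeping in $\alpha_{y,g}e$ --- specifically identifying the denominator as $[N:Y\cap N]$ via the class-sum product $\widehat{Y}\widehat{N} = |Y\cap N|\widehat{YN}$ --- while the remainder is formal and rests on choosing $\widetilde{N}$ as the central idempotent that detects the obstruction coming from $N \not\subseteq Y$.
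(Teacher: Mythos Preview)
Your argument is correct and follows essentially the same approach as the paper: both use the nilpotent elements $(1-y)g\widehat{Y}$, the central idempotent $\widetilde{N}$, and the key identity $\widehat{Y}\widehat{N}=|Y\cap N|\,\widehat{YN}$. The only difference is logical organisation---the paper assumes $YN$ is not normal and picks a single pair $(y,g)$ with $y^g\notin YN$ to force $N\subseteq Y$, whereas you assume $N\not\subseteq Y$ and run the integrality test over all $(y,g)$ to force $g^{-1}Yg\subseteq YN$; these are contrapositives of the same computation.
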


\begin{proof}
Suppose $H = Y N$ is not normal in $G$. We claim that $Y \supseteq N$. Let $g \in G$ be such that $H^g \neq H$. Then $Y^g \not\subseteq H$. Choose $y \in Y$ with $y^g \not\in H$. Consider the nilpotent element $\alpha = (1-y) g \widehat{Y} \in \zz[G]$ and central idempotent  $e =\widetilde{N}\in \qq[G]$. Because, $G$ has ND, we have $\alpha e \in \zz[G]$. Note that $\widehat{Y} \widehat{N} = \widehat{H} |Y \cap N|$ and $\alpha e = (1-y) g \widehat{H} |Y \cap N| / |N|$. As $y^g \not\in H$, we have that the two cosets $g H$ and $y g H$ are disjoint. Hence, we must have $|Y \cap N| / |N| \in \zz$ and thus $N = Y \cap N \subseteq Y$.
\end{proof}

The cyclic group of order $n$ we denote by $C_n$ and if $g$ is a generator then we also denote it as $\langle g \rangle_{n}$.
The nonabelian group $C_3 \rtimes C_8$ has SN but it does not have property ND (see for example the proof of \cite[Lemma 2.3]{LiuPas2} or \cite[Theorem 4.1]{HalPas2})
 We have another counterexample in Example~\ref{exmpSSN3}. So the class of SN groups is larger than the class of ND groups.

Note that the dihedral group $D_{12}=\langle a\rangle_6 \rtimes \langle b\rangle_{2}$ does not have SN by considering $Y = \langle b \rangle$ and $N = \langle a^3 \rangle$. Thus, $D_{12}$ does not have ND by Proposition~\ref{propDecNilp}. This gives another easy argument for this fact.

It is easy to check that the SN property is inherited by quotient groups. However, the ND property may be not inherited by quotient groups because in general nilpotent elements of an integral group ring can not be lifted via an epimorphism of groups, see \cite[p. 117]{HalPasWil} for an example.
However, subgroups of SN groups need not be SN (see Example~\ref{exmpSSN2}). Hence, the following definition introduced by Liu and Passman \cite{LiuPas5}.

\begin{defn}
\label{defnSSN}
A finite group $G$ is said to have SSN if every subgroup of $G$ has SN.
\end{defn}

That the classes of SN groups and SNN groups are distinct  follows for example from  Example~\ref{exmpSSN2}. 
We now present the classification of finite SSN groups obtained in \cite{LiuPas5}.
It will be used in Section~\ref{secII4} and Section~\ref{secII3}. We begin with nilpotent groups.

\begin{prop}[{\cite[Proposition 2.2 and Proposition 2.3]{LiuPas5}}]
\label{propSSN1}
Let $G$ be a finite nilpotent group.
\begin{enumerate}
\item If $G$ is not a $p$-group for any prime $p$ then $G$ has \textup{SSN} if and only if $G$ is either abelian or Hamiltonian.
\item If $G$ is a $p$-group for some prime $p$ then $G$ has  \textup{SSN} if and only if all non-cyclic subgroups of $G$ are normal (one calls this the \textup{NCN} property).
\end{enumerate}
\end{prop}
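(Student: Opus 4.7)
For the ``if'' direction, abelian and Hamiltonian groups are Dedekind, so every subgroup is normal and SN holds vacuously in every subgroup. For the ``only if'' direction, write $G = P_1 \times \cdots \times P_k$ with $k \geq 2$ distinct prime Sylow subgroups; I claim each $P_i$ must be Dedekind. If some $Y \leq P_i$ were non-normal in $P_i$, I would apply SN to $G$ itself with this $Y$ and $N := P_j$ for any $j \neq i$: then $N \trianglelefteq G$ (by nilpotence), $N \not\subseteq Y$ by coprime orders, yet $YN = Y \times P_j$ is not normal in $G$ because its $P_i$-conjugates $Y^g \times P_j$ vary with $g$ since $Y$ is not normal in $P_i$. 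This contradicts SN, so every $P_i$ is Dedekind. Because every subgroup of a finite nilpotent group is the direct product of its intersections with the Sylows, every subgroup of $G$ is then normal; hence $G$ is Dedekind, and being non-$p$ it is abelian or of the form $Q_8 \times E \times A$.

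\textbf{Part (2), the ``if'' direction.} Assume every non-cyclic subgroup of $G$ is normal (the NCN property) and take $H \leq G$, $Y \leq H$, $N \trianglelefteq H$ with $N \not\subseteq Y$. If $YN$ is non-cyclic, NCN yields $YN \trianglelefteq G$, and a fortiori $YN \trianglelefteq H$. Otherwise $YN$ is a cyclic $p$-group, whose subgroups form a chain under inclusion; since $Y$ and $N$ both lie in $YN$ and $N \not\subseteq Y$, we must have $Y \subseteq N$, whence $YN = N \trianglelefteq H$. This establishes SSN.

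\textbf{Part (2), the ``only if'' direction.} Assume $G$ is a $p$-group with SSN and let $Y \leq G$ be non-cyclic; the goal is $Y \trianglelefteq G$. Suppose not, so there is $g \in G$ with $Y^g \neq Y$, and some $y \in Y$ satisfies $y^g \notin Y$. Since $Y$ is non-cyclic I can extend $\{y\}$ to a minimal generating set $\{y = y_1, y_2, \ldots, y_d\}$ of $Y$ with $d \geq 2$, and set $Y_0 := \langle y_2, \ldots, y_d \rangle$, a proper subgroup of $Y$ with $y \notin Y_0$ by minimality. Inside $H := \langle Y, g \rangle$ let $N$ be the normal closure of $\langle y \rangle$. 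Then $N \trianglelefteq H$ with $y \in N \setminus Y_0$, so SN applied in $H$ forces $Y_0 N \trianglelefteq H$. \emph{The main obstacle} is now this final step: converting the abstract conclusion $Y_0 N \trianglelefteq H$ into an explicit contradiction, for instance by exploiting $y^g \in N \setminus Y$ and iterating the argument on a smaller non-cyclic non-normal witness produced inside $Y_0 N$. This requires the fine structure of $p$-groups, in particular that index-$p$ subgroups are normal and Frattini quotients are elementary abelian, and will likely proceed by induction on $|Y|$ or on the index $[G : N_G(Y)]$.
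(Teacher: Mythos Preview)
This proposition is quoted in the paper without proof (it is attributed to Liu and Passman), so there is no in-paper argument to compare against. Your Part~(1) and the ``if'' half of Part~(2) are correct and cleanly written.

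The ``only if'' half of Part~(2) has a genuine gap, which you yourself flag. The problem is that the conclusion $Y_0 N \trianglelefteq H$ carries essentially no information: since $Y = \langle Y_0, y\rangle$ and $y \in N$, you already have $Y \subseteq Y_0 N$, and nothing prevents $Y_0 N$ from being all of $H = \langle Y, g\rangle$ (the normal closure of $\langle y\rangle$ in $H$ can be large), in which case its normality in $H$ is vacuous. Your proposed recursion ``on a smaller non-cyclic non-normal witness produced inside $Y_0 N$'' is never actually manufactured --- the construction gives no reason for $Y_0 N$ to contain a non-cyclic, non-normal subgroup strictly smaller than $Y$, so there is nothing on which to iterate. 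The normal-closure idea does not seem to close.

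A more tractable route is induction on $|G|$: SSN passes to subgroups and to quotients, so by induction every proper subgroup and every proper quotient of $G$ already has NCN. For a non-cyclic $Y \lneq G$, any maximal subgroup $M \supseteq Y$ then satisfies $Y \trianglelefteq M$; if $Y$ lies in two distinct maximal subgroups $M_1, M_2$ one gets $Y \trianglelefteq \langle M_1, M_2\rangle = G$ immediately. The residual case, where $Y$ is contained in a unique maximal subgroup (equivalently $Y\Phi(G)$ is maximal), is where the genuine work sits, and is handled by combining SN in $G$ with NCN of suitable quotients $G/N$. That analysis, rather than a normal-closure trick, is what the cited reference supplies.
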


A complete classification of finite $p$-groups with NCN has been given by  Bo\v{z}ikov and Janko in \cite[Theorem 1.1]{BozJan}. 
For a formulation we refer to  Theorem~\ref{thmNCN2}. 

Recall that a group $Q$ is said to act {\it reducibly} on a group $P$ if $P = P_1 \times P_2$ for some nontrivial $Q$-stable subgroups $P_1$ and $P_2$ of $P$. Otherwise, $Q$ {\it acts irreducibly} on $P$. For solvable SSN groups that are not nilpotent, one has the following result.

\begin{thm}[{\cite[Theorem 2.7]{LiuPas5}}]
\label{thmSSN}
Let $G$ be a finite solvable group that is not nilpotent. Then $G$ has \textup{SSN} if and only if $G$ is one of the following two types of groups\textup{:}
\begin{enumerate}
\item[(i)]
$G = P \rtimes Q$ where $P$ is a normal elementary abelian $p$-subgroup of $G$ for some prime $p$, $Q$ is a cyclic $p'$-subgroup of $G$ which acts faithfully on $P$, and every nontrivial subgroup of $Q$ acts irreducibly on $P$.
\item[(ii)]
$G = P \rtimes Q$ where $P$ is a normal subgroup of $G$ of order $p$ for some prime $p$, $Q$ is a cyclic $q$-group for some prime $q \neq p$ with $|Q| \geq q^2$, and $Q$ does not act faithfully on $P$.
\end{enumerate}
\end{thm}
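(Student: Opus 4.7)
The plan splits into an easy sufficiency check and a harder necessity argument.

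For \emph{sufficiency}, I would verify \SSN{} directly for each family. Since $\gcd(|P|,|Q|)=1$ in both (i) and (ii), Schur--Zassenhaus gives that every subgroup of $G = P \rtimes Q$ has the form $P_0 \rtimes Q_0$ (up to $P$-conjugation of $Q_0$), with $P_0 \leq P$ a $Q_0$-invariant subgroup and $Q_0 \leq Q$. In case (i) the irreducibility hypothesis forces $P_0 \in \{1, P\}$ whenever $Q_0 \neq 1$, and in case (ii) $|P|=p$ already gives $P_0\in\{1,P\}$. This sharply restricts the subgroup lattice, and the \SN{} condition then reduces to an elementary case analysis in each ambient subgroup: the normal subgroups of a given $P_0 \rtimes Q_0$ are of a very restricted form, and checking whether $YN$ is normal becomes mechanical.

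For \emph{necessity}, let $G$ be a finite solvable non-nilpotent group with \SSN. First, I would show that $G$ has a unique minimal normal subgroup $P$, necessarily an elementary abelian $p$-group: distinct minimal normal subgroups $N_1,N_2$ satisfy $N_1 \cap N_2 = 1$, and combining this with the existence of a non-normal Sylow subgroup (supplied by non-nilpotency of $G$) and the \SN{} property applied in an appropriate subgroup yields a contradiction. Using the solvable-group identity $C_G(F(G))\subseteq F(G)$, uniqueness of $P$ forces $F(G)$ to be a $p$-group, and Hall's theorem then writes $G = P\rtimes Q$ with $Q$ a $p$-complement. Next I would show $Q$ is cyclic, by applying \SN{} inside subgroups of the form $PQ_0$ with $Q_0 \leq Q$: a non-cyclic $Q$ would admit two distinct maximal subgroups, and a witness $(Y,N)$ comparing them would violate \SN{} in a suitable subgroup. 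Finally, I split on whether $Q$ acts faithfully on $P$. In the faithful case, any nontrivial $Q_0 \leq Q$ preserving a proper nontrivial $P_0 < P$ violates \SN{} in $P\rtimes Q_0$ (take $Y = Q_0$ and $N = P_0$), forcing the irreducibility condition of (i). In the non-faithful case an analogous argument using the kernel of the action forces $|P|=p$, $Q$ to be a cyclic $q$-group with $q\neq p$, and $|Q|\geq q^2$, yielding (ii).

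The main obstacle is the necessity direction, and specifically constructing the explicit witness subgroups $(Y,N)$ that rule out non-cyclic $Q$ or reducible actions. Because \SSN{} imposes \SN{} at \emph{every} level of the subgroup lattice, a simple induction on $|G|$ via quotienting by $P$ is insufficient: one must also handle subgroups that do not arise as preimages of subgroups of $G/P$. The careful bookkeeping required to isolate exactly the parameters appearing in (i) and (ii) --- $|P|$, whether $P$ is merely cyclic or fully elementary abelian, the coprimality of $|P|$ and $|Q|$, and the faithfulness of the action --- is where the genuine work lies.
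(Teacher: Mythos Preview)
The paper does not prove this theorem: it is quoted from \cite{LiuPas5} and used as a black box, so there is no in-paper argument to compare against. I can therefore only comment on the internal soundness of your outline.

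Your necessity argument has a genuine gap at the very first step. You claim that any finite solvable non-nilpotent \SSN{} group has a \emph{unique} minimal normal subgroup, and then use this to force $F(G)$ to be a $p$-group and hence $G = P \rtimes Q$ with $Q$ a $p'$-complement. But groups of type~(ii) already violate both conclusions. Take $G = C_p \rtimes C_{q^k}$ with $k\geq 2$ and non-faithful action: the kernel of the action is a nontrivial normal $q$-subgroup, so its unique minimal subgroup $M\simeq C_q$ is minimal normal in $G$, and so is $P = C_p$. Thus $G$ has (at least) two minimal normal subgroups, and $F(G) \supseteq P \times M$ is not a $p$-group. Your proposed contradiction --- coming from $N_1 \cap N_2 = 1$ together with a non-normal Sylow subgroup --- therefore cannot exist, because this configuration actually occurs in the groups you are trying to classify. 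Since the uniqueness step feeds directly into your derivation of the splitting $G = P \rtimes Q$ and the cyclicity of $Q$, the whole chain after it is unsupported. The argument needs to be restructured so that the case split (faithful vs.\ non-faithful, or equivalently ``one minimal normal subgroup vs.\ two'') happens \emph{before} you try to pin down $F(G)$ and the complement, rather than after.
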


There is only one non-solvable SSN group.

\begin{thm}[{\cite[Theorem 3.3 and Corollary 3.4]{LiuPas5}}]
\label{thmSSN2}
The alternating group $A_5$ of degree $5$ is the unique nonabelian simple group with \textup{SSN}. Moreover, $A_5$ is the unique non-solvable finite group with \textup{SSN}.
\end{thm}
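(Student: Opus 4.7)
The plan is to verify directly that $A_5$ has SSN and then use that SSN descends to subquotients to reduce the uniqueness claim to classifying nonabelian simple SSN groups. First, I would check SSN for $A_5$ by running through its conjugacy classes of subgroups $1$, $C_2$, $C_3$, $C_5$, $V_4$, $S_3$, $D_{10}$, $A_4$, $A_5$: cyclic groups and the abelian $V_4$ are trivially SN; in $S_3$ and $D_{10}$ the unique nontrivial proper normal subgroup is the cyclic derived subgroup, and any other proper subgroup together with it generates the whole group; in $A_4$ the unique nontrivial proper normal subgroup is $V_4$ and $Y V_4 \in \{V_4, A_4\}$ for every $Y \leq A_4$; and $A_5$ itself is simple, so its SN property is vacuous.

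Second, I would show that every nonabelian simple finite group $G$ with SSN is isomorphic to $A_5$, arguing by a minimal counterexample. By Proposition~\ref{propSSN1}(2) every Sylow subgroup of $G$ is NCN, which by the Bo\v{z}ikov--Janko classification (Theorem~\ref{thmNCN2}) is a strong restriction. Every proper subgroup of $G$ is SSN, hence by minimality either solvable (and so described by Proposition~\ref{propSSN1} and Theorem~\ref{thmSSN}) or isomorphic to $A_5$. If $A_5 \leq G$ properly, then any proper subgroup of $G$ containing $A_5$ is a non-solvable SSN group and hence by minimality equals $A_5$, so $A_5$ is maximal in $G$; the faithful action of $G$ on the $n = [G:A_5]$ cosets embeds $G \hookrightarrow S_n$ and forces $(n-1)! \geq 60$, so $n \geq 6$, after which the short list of simple groups of order $60n$ admitting such an embedding (principally $A_6$ at $n=6$ and $\mathrm{PSL}(2,11)$ at $n = 11$) is eliminated by exhibiting a subgroup failing SN, such as $S_4 \leq A_6$ or $D_{12} \leq \mathrm{PSL}(2, 11)$ (both failing SN, the latter as noted just after Proposition~\ref{propDecNilp}). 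If $A_5 \not\leq G$, then $G$ is a minimal simple group in Thompson's sense, and each family in Thompson's list (the $\mathrm{PSL}(2,q)$ families, $\mathrm{PSL}(3,3)$, and the Suzuki groups $\mathrm{Sz}(2^p)$) is excluded by exhibiting a concrete subgroup failing SN, typically a dihedral subgroup of order twice a number with at least two distinct prime factors, or a Sylow $2$-subgroup violating NCN. The main obstacle lies in this last step, where the NCN restriction on Sylow subgroups must be combined with Thompson's list to rule out each family uniformly.

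Third, I would promote this simple-group classification to the assertion that every non-solvable finite SSN group equals $A_5$. SSN is inherited by subgroups by definition, and by quotients because every subgroup of $G/N$ has the form $H/N$ for some $H \leq G$ with SN (by SSN of $G$) and SN descends to quotients as noted before Definition~\ref{defnSSN}. Hence $G$ has a nonabelian simple subquotient which, by the previous step, is $A_5$. To conclude $G = A_5$, pick a minimal normal subgroup $N \lhd G$. If $N$ is a direct product of nonabelian simples, then already $A_5 \times A_5$ fails SN (take the normal $A_5 \times 1$ and the non-normal $1 \times C_5$: their product $A_5 \times C_5$ is not normal in $A_5 \times A_5$), forcing $N \cong A_5$; then $G/N$ embeds in $\Out(A_5) = C_2$, and the case $G = S_5$ is excluded since its subgroup $S_4$ fails SN. If $N$ is an elementary abelian $p$-group, induction on $|G|$ gives $G/N \cong A_5$; picking a prime $q \in \{2,3,5\}$ with $q \neq p$, a Sylow $q$-subgroup of $G$ has the same order as a Sylow $q$-subgroup of $A_5$ and maps isomorphically onto a non-normal Sylow $q$-subgroup of $A_5$ (namely $V_4$, $C_3$, or $C_5$), providing a subgroup $Y \leq G$ with $Y \cap N = 1$, $N \not\subseteq Y$, and $YN$ not normal in $G$, contradicting SN of $G$.
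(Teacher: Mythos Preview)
The paper does not prove Theorem~\ref{thmSSN2}: it is quoted from \cite[Theorem~3.3 and Corollary~3.4]{LiuPas5} and used as input, so there is no proof in the present paper to compare your attempt against.

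Your overall plan --- verify SSN for $A_5$ directly, classify nonabelian simple SSN groups, then deduce the non-solvable case via subquotients --- is the natural one and matches the shape of Liu and Passman's treatment. Two points need attention. First, your induction in step two is set up for \emph{simple} SSN groups only, yet you invoke it to conclude that every non-solvable proper subgroup of the minimal counterexample is already $A_5$; that requires the full statement ``non-solvable SSN $\Rightarrow A_5$'' for groups of smaller order, so the induction should be run on that stronger assertion from the outset rather than on the simple case in isolation. Second, and more seriously, in the sub-case $A_5 \le G$ you pass from ``$A_5$ is maximal of index $n\ge 6$'' to ``$G$ lies on a short explicit list (principally $A_6$ and $\mathrm{PSL}(2,11)$)'' without justification; determining all finite simple groups containing a maximal $A_5$ is itself a classification-level statement, and naming two candidates is not a proof that no others occur. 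The Thompson-list analysis you sketch for the case $A_5\not\le G$ is on firmer ground, since each family there can indeed be eliminated by exhibiting a dihedral subgroup $D_{2m}$ with $m$ composite (which fails SN for the reason used for $D_{12}$ after Proposition~\ref{propDecNilp}), but as you acknowledge this still demands case-by-case work. Finally, in step three the embedding $G/N\hookrightarrow\Out(A_5)$ tacitly uses $C_G(N)=1$; this holds because otherwise $N\times C_G(N)\le G$ would fail SN by the same mechanism you used for $A_5\times A_5$, but it should be said.
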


We now give some concrete examples of SSN groups that satisfy Theorem~\ref{thmSSN}(i). Not all of them have property ND.

\begin{exmp}
\label{exmpSSN}
(i) $A_4 = (C_2 \times C_2) \rtimes C_3 = \langle a, b, c \mid a^2 = b^2 = c^3 = 1, a b = b a, a^c = b, b^c = a b \rangle$ has SSN and ND (since $\qq[A_4]$ has only one matrix component).

(ii) $G = C_5 \rtimes C_4 = \langle a, b \mid a^{5} = b^4 = 1, b a b^{-1} = a^2 \rangle$  has SSN and ND because $\qq[G] \simeq 2 \qq \oplus \qq(i) \oplus M_4(\qq)$ (see  Lemma~\ref{lemDecNilp8}).

(iii) $G = C_{11} \rtimes C_5 = \langle a, b \mid a^{11} = b^5 = 1, b a b^{-1} = a^3 \rangle$  has SSN and ND because 
 $\qq[G] \simeq \qq \oplus \qq(\xi_5) \oplus M_5(\qq(\alpha))$ (see Lemma~\ref{lemDecNilp8}), where $\alpha = \xi_{11} + \xi_{11}^3 + \xi_{11}^4 + \xi_{11}^5 + \xi_{11}^9$ and $\qq(\alpha) = \qq(\sqrt{-11})$.
\end{exmp}

\begin{exmp}
\label{exmpSSN2}
We give an example to illustrate that there is a finite SN group which does not have SSN. This example also shows that the ND property is not necessarily inherited by subgroups. Let $$G = (C_3 \times C_3) \rtimes C_8 = \langle a, b, c \mid a^3 = b^3 = c^8 = 1, a b = b a, a^c = b, b^c = a b \rangle.$$ 
It is easy to check that each nontrivial normal subgroup $N$ of $G$ must contain $G'=\langle a, b \rangle$. Thus, $G / N$ is cyclic and then $Y N \trianglelefteq G$ for every subgroup $Y \leq G$. So $G$ has SN. Consider the subgroup $G_1 = \langle a, b, c^4 \rangle = (C_3 \times C_3) \rtimes C_2$. 
Since $a^{c^4} = a^2$, $N_1 = \langle a \rangle$ is a normal subgroup of $G_1$. Let $Y = \langle b c^4 \rangle$. Then $Y \not\supseteq N_1$. Moreover, $(b c^4)^{c^4} = b^2 c^4 \not\in Y N_1$. Thus, $G_1$ does not have SN and then $G$ does not have SSN. 

From Proposition~\ref{propDecNilp} we also conclude that  $G_1$ does not have ND (note that $\qq[G_1] \simeq 2 \qq \oplus 4 M_2(\qq)$ has four matrix components, see for example \cite[Group Type 18/5 on p. 193]{Gir}). However, $G$ has ND since $\qq[G]$ has only one matrix component. To see this, we first note that $G' = \langle a, b \rangle$ is a maximal abelian subgroup of $G$ containing $G'$ and that 
$(H,K)=(G',\langle a \rangle )$ 
is a pair satisfying conditions in Theorem~\ref{thmMetabelian}. Thus, $e(G, H, K)$ is a primitive central idempotent of $\qq[G]$ since $G$ is metabelian. One can check that $e(G, H, K) = 1 - \widetilde{G'}$. It follows that  $\qq[G]$ has only one non-commutative component. So $G$ indeed has ND. 

Further note that $(H, K)$ is a strong Shoda pair of $G$ with  $N = N_G(K) = \langle a, b, c^4 \rangle$. Hence, by Proposition~\ref{propSShodaPair}, $\qq[G] e(G, H, K) \simeq M_4(A)$ where $A = \qq(\xi_3) * N / H$. Since $N / H = \langle \overline{c^4} \rangle$ and $a^{c^4} = a^2$, the action is $\xi_3^{\overline{c^4}} = \xi_3^2$. Fix preimages $K1$ and $K c^4$ of $\overline{1}$ and $\overline{c^4}$, respectively, under the natural homomorphism $N / K \to N / H$. The twisting is $f(\overline{c^4}^i, \overline{c^4}^j) = 1$ for $0 \leq i, j \leq 1$. Note that the fixed field of $N/K$ in $\qq(\xi_3)$ is $\qq$. It follows that $A$ is actually a cyclic algebra $(\qq(\xi_3) / \qq, \overline{c^4}, 1)$. Thus, by Proposition~\ref{propCyclicAlg}(ii), we have $A \simeq M_2(\qq)$. Hence, $\qq[G] e(G,H,K) \simeq M_4(A) \simeq M_8(\qq)$. 
\end{exmp}

\begin{exmp}
\label{exmpSSN3}
The group $G$ in  Example~\ref{exmpSSN2} has SN and ND. We now show that the subgroup 
$K = \langle a, b, c^2 \rangle$ has SN but does not have ND.
Observe that $K' = \langle a, b \rangle$ 
and that every nontrivial normal subgroup of $K$ contains $K'$. Thus, $K$ has SN.
To prove that $K$ does not have ND consider the nilpotent element  $\alpha = (1-c^4) a (1+c^4) \in \zz[K]$.
Note that $e = e(K, K', \langle a \rangle)$ is a primitive central idempotent of $\qq[K]$ by Theorem~\ref{thmMetabelian}. 
 Since $\cen_g(\varepsilon(K', \langle a \rangle)) = \langle a, b, c^4 \rangle$, we have $e = \varepsilon(K', \langle a \rangle) + \varepsilon(K', \langle a \rangle)^{c^2} 
 = \widetilde{a} + \widetilde{a b} - 2 \widetilde{a} \widetilde{b}$. Note that  $\alpha = a (1 - a c^4) (1 + c^4) = a (1 - a) \widehat{c^4}$
 and thus  $e \alpha = \widetilde{a b} \, a (1 - a) \widehat{c^4} \not\in \zz[K]$. Hence $K$ is not ND.
\end{exmp}

\begin{remk}\label{remkSN}
The reader can consult Figure~\ref{figure1} at  the end of this article for a concise summary in relations between MJD, ND, SN and SSN. 
The following examples are used in this context.\\
\indent  (i) 
Note that groups $S_3$, $D_8$, $Q_8$, $D_{10}$, $Q_{12}$ and $A_4$ have SN since they have ND. The group of smallest order that is a non-SN group is $D_{12}$. Note that all groups of order $24$ containing $D_{12}$ are $D_{24} \simeq C_4 \rtimes S_3$, $C_3 \rtimes D_8$, $C_4 \times S_3$ and $C_2 \times C_2 \times S_3$. These groups do not have SN since, with notation as in Proposition~\ref{propDecNilp}, we can choose subgroups $N$ and $Y$ such that
 such that $N \cap Y = 1$ and $N Y$ is not normal. So all SN groups of order $24$ must have SSN. The only nonabelian group of order $14$ is $D_{14}$ which has SSN since it is a group in Theorem~\ref{thmSSN}(i). Groups of order $15$ are abelian. The group $C_2 \times D_8$ does not have SN by choosing $N = C_2$ and $Y \leq D_8$ such that $Y \not\trianglelefteq D_8$. Moreover, the central product (\cite[p. 29]{Gor}) $D_8 \Ydown D_8$ of order $32$ has ND since $\qq[D_8 \Ydown D_8] \simeq 16 \qq \oplus M_4(\qq)$ has only one matrix component, see \cite[Group Type 32/42 on p. 199]{Gir}. Thus, $D_8 \Ydown D_8$ has SN. However, this group has a subgroup isomorphic to $C_2 \times D_8$. As a consequence, $D_8 \Ydown D_8$ is the smallest SN group which does not have SSN. It is also the smallest non-SSN group such that its rational group algebra has only one matrix component.\\
\indent (ii) If $G$ is a nonabelian SN group with $|G| \leq 22$, then one can verify (via a  case-by-case argument)  that $\qq[G]$ has only one matrix component. The group $C_3 \rtimes C_8$ with a nontrivial action is an SSN group by Theorem~\ref{thmSSN}(ii) and it does not have ND by the proof of \cite[Lemma 2.3]{LiuPas2} or \cite[Theorem 4.1]{HalPas2}. In other words, $C_3 \rtimes C_8$ is a group of smallest order that has SN but which does not have ND.
\end{remk}



\vspace{12pt}
\section{Nilpotent Decomposition for Nilpotent \textup{SSN} Groups}
\label{secII4}

In this section, we study finite nilpotent groups $G$ that have SSN. This with the aim of proving our first main result: a description of such groups $G$ so that $\qq G$ has only one matrix component. In the first subsection we give a series of lemmas that deal with  $p$-groups, in the second subsection we handle with the groups that are not $p$-groups and in the third subsection we state the conclusion, the first main result.


\subsection{Finite SSN groups that are $p$-Groups} $\;$
\label{secII41}

By Proposition~\ref{propSSN1}, a finite  $p$-group has the SSN property if and only if it has the NCN property, i.e.  all its non-cyclic subgroups are normal.
Since  rational group algebras of Hamiltonian and abelian  $2$-groups have no matrix components, in this section, we only have to deal with non-Hamiltonian  NCN groups
that are nonabelian.
Non-Hamiltonian NCN finite $p$-groups had been classified by Z. Bo\v{z}ikov and Z. Janko \cite{BozJan}. In order to state their result we recall some terminology and notation.
A $p$-group is called {\it minimal nonabelian} (resp. {\it minimal non-metacyclic}) if it is nonabelian (resp. non-metacyclic) and all proper subgroups are abelian (resp. metacyclic). A $p$-group $G$ is said to be  of {\it maximal class} if $G$ has nilpotency class $n$ where $|G| = p^{n+1}$. Moreover, the notation $\Omega_1(G)$ is defined by $\Omega_1(G) = \langle g \in G \mid g^{p} = 1 \rangle$. 

\begin{thm}[{\cite[Theorem 1.1]{BozJan}}]
\label{thmNCN2}
Let $G$ be a finite $p$-group. Then all the non-cyclic subgroups of $G$ are normal if and only if $G$ is either abelian, Hamiltonian or satisfies one of the following conditions\textup{:}
\begin{enumerate}
\item[(BJ1)]
$G$ is metacyclic minimal nonabelian and $G$ is not isomorphic to $Q_8$, the quaternion group of order $8$.
\item[(BJ2)]
$G = G_0 \Ydown Z$, the central product of a nonabelian group $G_0$ of order $p^3$ with a cyclic group $Z$, where $G_0 \cap Z = \mathcal{Z}(G_0)$, the center of $G_0$, and if $p=2$, then $|Z| > 2$.
\item[(BJ3)]
$p=2$ and $G = Q_8 \times Z$ where $Z$ is cyclic of order $>2$.
\item[(BJ4)]
$G$ is a group of order $3^4$ and of maximal class with $\Omega_1(G) = G' \simeq C_3 \times C_3$.
\item[(BJ5)]
$G = \langle a, b \mid a^8 = 1, a^b = a^{-1}, a^4 = b^4 \rangle$.
\item[(BJ6)]
$G = Q_{16}$.
\item[(BJ7)]
$G = D_8 \Ydown Q_8$.
\item[(BJ8)]
$G = \langle a, b, c \mid a^4 = b^4 = [a, b] = 1, c^2 = a^2, a^c = a b^2, b^c = b a^2 \rangle$, the minimal non-metacyclic group of order $2^5$.
\item[(BJ9)]
$G = \langle a, b, c, d \mid a^4 = b^4 = [a, b] = 1, c^2 = a^2 b^2 , a^c = a^{-1}, b^c = a^2 b^{-1}, d^2 = a^2, a^d = a^{-1} b^2, b^d = b^{-1}, [c, d] = 1 \rangle$, a special\footnote{A $p$-group $P$ is called {\it special} if either it is elementary abelian or $P' = \mathcal{Z}(P) = \Phi(P)$ is elementary abelian where $\Phi(P)$ is the Frattini subgroup of $P$. 
} $2$-group of order $2^6$ in which every maximal subgroup is isomorphic to the minimal non-metacyclic group of order $2^5$ of type  (BJ8).
\end{enumerate}
Conversely, all the above groups satisfy the assumptions of the theorem.
\end{thm}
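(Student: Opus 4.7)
The plan is to prove both directions separately. For the converse I would verify type by type that each listed group has NCN: abelian and Hamiltonian cases are immediate; for (BJ1) the unique minimal normal subgroup $\langle a^{p^{m-1}}\rangle$ lies in every non-cyclic subgroup, forcing normality; for (BJ2)--(BJ9) the presentations are explicit enough to enumerate the non-cyclic subgroups (there are only a handful) and check each is normal by direct conjugation with the given generators.

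For the forward direction, the first case split is on whether $G$ is itself minimal nonabelian. Here I would invoke R\'edei's classification of minimal nonabelian $p$-groups. The metacyclic family (excluding $Q_8$, which is Hamiltonian) gives (BJ1) directly. The non-metacyclic minimal nonabelian groups have the form $\langle a, b, c \mid a^{p^m} = b^{p^n} = c^p = 1,\ [a,b] = c,\ [a,c] = [b,c] = 1 \rangle$; the NCN condition must be tested against their abelian proper subgroups of rank $2$, and one shows this forces $m = n = 1$, yielding the nonabelian groups of order $p^3$, which are the base case of (BJ2) with $|Z| = |\mathcal{Z}(G_0)|$.

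If $G$ is not minimal nonabelian, then $G$ contains a proper nonabelian subgroup, and hence a proper minimal nonabelian subgroup $M$. Since $M$ is non-cyclic, NCN forces $M \trianglelefteq G$, and by induction one may assume $M$ is of type (BJ1) or a nonabelian group of order $p^3$. My strategy is then to exploit the interaction of $M$ with the possibly non-normal cyclic subgroups: if $\langle x \rangle$ is not normal in $G$, then for every $g \in G$ the subgroup $\langle x, x^g \rangle$ is non-cyclic and thus normal, which sharply constrains the conjugation action on $\langle x \rangle$. Combined with standard bounds on the rank of $\Omega_1(G)$, on the class of $G$, and on the exponent, this yields a finite search. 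A natural secondary split is by rank: if $G$ admits an abelian normal subgroup of rank $\geq 3$ one is driven into (BJ2), (BJ3), or (BJ9), whereas if every abelian subgroup has rank $\leq 2$ then $G$ is ``close to metacyclic'' and lands in (BJ1), (BJ4), (BJ5), (BJ6), or (BJ8).

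The hard part will be the $p = 2$ analysis, where the sporadic groups (BJ5)--(BJ9) arise. Here I expect to compute commutators inside candidates of small order ($2^4$, $2^5$, $2^6$) and to rule out infinite parameter families by showing that an NCN $2$-group which is neither metacyclic nor of type (BJ2) or (BJ3) has a tightly constrained Frattini subgroup, $\Omega_1$, and exponent. Isolating (BJ8) as the essentially unique minimal non-metacyclic $2$-group with NCN, and then identifying (BJ9) as the unique non-metacyclic cover all of whose maximal subgroups are isomorphic to (BJ8), is the most delicate point; for that step I would likely need a detailed analysis of central extensions $1 \to \mathcal{Z}(G) \to G \to G/\mathcal{Z}(G) \to 1$ combined with the requirement that every cyclic non-normal subgroup sit inside a normal non-cyclic one.
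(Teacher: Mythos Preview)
The paper does not prove this theorem at all: it is quoted verbatim from Bo\v{z}ikov and Janko \cite{BozJan} and used as a black box in Section~\ref{secII41}. There is therefore no ``paper's own proof'' to compare your proposal against.

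Your outline is a reasonable high-level sketch of how such a classification might go (R\'edei's theorem for the minimal nonabelian case, then an inductive analysis via a minimal nonabelian normal subgroup, with the sporadic $2$-groups isolated by rank and exponent bounds), and it is broadly in the spirit of how Bo\v{z}ikov and Janko argue in the original paper. But within the present paper no argument is expected or given for this statement, so for the purposes of this manuscript you should simply cite \cite[Theorem~1.1]{BozJan} rather than attempt an independent proof.
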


So, in order to deal with finite  $p$-groups that have  SSN we will have to deal separately with each type listed  in the Theorem. To do so we 
we will make  use of  Theorem~\ref{thmMetabelian} that describes Wedderburn components via strong Shoda pairs $(H,K)$. Recall from Lemma~\ref{lemSSPcomm} 
that the corresponding component is not commutative when $K \not\supseteq G'$.\\

{\it Groups of type \textup{(BJ1)}}\\

According to \cite[Proposition 1.3]{BozJan} or \cite[Lemma 3.1]{BerJan}, groups of type  (BJ1) have the following presentation (proved by L. R\'{e}dei) $$G = \langle a, b \mid a^{p^m} = b^{p^n} = 1, \, a^b = a^{1+p^{m-1}} \rangle = C_{p^m} \rtimes C_{p^n},$$ where $m \geq 2$, $n \geq 1$, $|G| = p^{m+n}$ and $|G'| = p$. 
Note that 
$a^p$ and $b^p$ are central,  and  $G' = \langle a^{p^{m-1}} \rangle$ since $[a, b] = a^{p^{m-1}}$ and $|G'| = p$. We remark here that $G$ has ND when $(p, m, n) = (2, 2, 1), (2, 2, 2), (2, 3, 1)$ and $(3, 2, 1)$, since they have MJD by \cite[Lemma~2.10]{Liu} and \cite[Lemma~3.6]{LiuPas3}. Moreover, the proof of \cite[Lemma~2.8]{Liu} shows that $G$ does not have ND for $p=2$ and $(m, n) = (2, \, \geq 4)$, $(3, \, \geq 2)$. Now we discuss when $\qq[G]$ has only one matrix component.

\begin{lem}
\label{lemDecNilpBJ1}
Let $n = 1$. Then $\qq[G]$ has only one matrix component for any $p, m \geq 2$. Moreover, this component is isomorphic to $M_p(\qq(\xi_{p^{m-1}}))$.
\end{lem}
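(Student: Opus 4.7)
The plan is to apply Theorem~\ref{thmMetabelian} to enumerate the primitive central idempotents of $\qq[G]$ and then use Proposition~\ref{propSShodaPair} to identify the unique non-commutative Wedderburn component. First I would note that $G$ is metabelian with $A = \langle a\rangle$ a maximal abelian subgroup containing $G' = \langle a^{p^{m-1}}\rangle$: normality and abelianness are clear, and maximality follows from $[G:A] = p$ together with $G$ being nonabelian. The only subgroups of $G$ containing $A$ are $A$ and $G$, so the maximal $H$ appearing in Theorem~\ref{thmMetabelian} is either $A$ or $G$. Pairs $(G,K)$ yield commutative components, while pairs $(A,K)$ with $K \leq A$ yield components that are non-commutative precisely when $G' \not\subseteq K$, by Lemma~\ref{lemSSPcomm}; moreover $A/K$ is automatically cyclic, and maximality of $H=A$ is equivalent to $G' \not\subseteq K$.

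Next I would isolate the non-commutative components. Since $A$ is cyclic of order $p^m$, every nontrivial subgroup of $A$ contains the unique subgroup of order $p$, which is precisely $G'$; hence $G' \not\subseteq K$ forces $K = 1$. This gives exactly one non-commutative primitive central idempotent $e(G, A, 1)$, and so $\qq[G]$ has at most one matrix component.

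Finally I would compute $\qq[G]e(G, A, 1)$ via Proposition~\ref{propSShodaPair} with $h = p^m$, $N = N_G(1) = G$ and $n = [G:N] = 1$, so this component equals $\qq(\xi_{p^m}) * (G/A)$ where $G/A = \langle \bar b\rangle$ is cyclic of order $p$. Conjugation by $b$ sends $a$ to $a^{1+p^{m-1}}$, and therefore acts as $\sigma : \xi_{p^m} \mapsto \xi_{p^m}^{1+p^{m-1}}$; a short binomial computation gives $(1+p^{m-1})^p \equiv 1 \pmod{p^m}$ for $m \geq 2$, so $\sigma$ has order exactly $p$, fixes $\xi_{p^{m-1}} = \xi_{p^m}^{p}$, and thus has fixed field $\qq(\xi_{p^{m-1}})$ on degree grounds. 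Choosing the obvious preimages $1, b, b^2, \ldots, b^{p-1}$ of $\bar b^0, \ldots, \bar b^{p-1}$ in $G$, the relation $b^p = 1$ makes each product $b^i b^j$ already equal to the chosen preimage of $\bar b^{i+j}$, so no power of $a$ is introduced and the twisting is trivial. Hence the component is the cyclic algebra $(\qq(\xi_{p^m})/\qq(\xi_{p^{m-1}}), \sigma, 1)$, which by Proposition~\ref{propCyclicAlg}(ii) is isomorphic to $M_p(\qq(\xi_{p^{m-1}}))$. The only step that genuinely requires care is the verification that the twisting vanishes; the Shoda-pair enumeration and the Galois-theoretic identification of the fixed field are routine.
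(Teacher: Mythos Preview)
Your argument is correct and follows essentially the same route as the paper's proof: both use Theorem~\ref{thmMetabelian} with $A=\langle a\rangle$ to see that the only strong Shoda pair with $G'\not\subseteq K$ is $(A,1)$, and then identify $\qq[G]e(G,A,1)$ as the cyclic algebra $(\qq(\xi_{p^m})/\qq(\xi_{p^{m-1}}),\sigma,1)\simeq M_p(\qq(\xi_{p^{m-1}}))$ via Proposition~\ref{propSShodaPair} and Proposition~\ref{propCyclicAlg}(ii). Your write-up is in fact a bit more explicit than the paper's about why the twisting is trivial, but the ideas are identical.
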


\begin{proof}
Let $A = \langle a \rangle$. Then $A$ is of index $p$ in $G$. Thus, $A$ is a maximal abelian subgroup of $G$ containing $G' = \langle a^{p^{m-1}} \rangle$. Let $(H,K)$ be a pair of subgroups of $G$ in Theorem~\ref{thmMetabelian} such that $G' \not\subseteq K$. Then $(H,K) = (A,1)$. In other words, $e(G, A, 1) = \varepsilon(A, 1) = 1 - \widetilde{a^{p^{m-1}}} = 1 - \widetilde{G'}$ is the only primitive central idempotent $e$ such that $\qq[G] e$ is non-commutative.

To describe  $\qq[G] e(G, A, 1)$,  note that $(A, 1)$ is a strong Shoda pair of $G$. By Proposition~\ref{propSShodaPair}, $\qq[G] e(G, A, 1) \simeq \qq(\xi_{|a|}) * (G / A) \simeq \qq(\xi_{p^m}) * \langle \sigma \rangle$ where $\sigma \in \Aut(\qq(\xi_{p^m}))$ and $\sigma(\xi_{p^m}) = \xi_{p^m}^{1+p^{m-1}}$. Note that $\sigma$ fixes $\xi_{p^{m-1}}$ since $\xi_{p^{m-1}} = \xi_{p^m}^p$. It is easy to check that the fixed field $\qq(\xi_{p^m})^{\sigma} = \qq(\xi_{p^{m-1}})$. Note also that $\sigma^p$ is the identity map because $(1+p^{m-1})^p \equiv 1 \pmod{p^m}$. By Proposition~\ref{propCyclicAlg}, we can conclude that $\qq(\xi_{p^m}) * \langle \sigma \rangle \simeq M_p(\qq(\xi_{p^{m-1}}))$.
\end{proof}

\begin{lem}
\label{lemDecNilpBJ1-2}
Let $n \geq 2$. Then $\qq[G]$ has only one matrix component if and only if $p = m = n = 2$. Moreover, this component is isomorphic to $M_2(\qq)$.
\end{lem}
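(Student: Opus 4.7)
I would follow the same strategy as in Lemma~\ref{lemDecNilpBJ1}. Set $A=\langle a,b^p\rangle$; since $a$ commutes with $b^p$ and $[G:A]=p$, $A$ is a maximal abelian subgroup of $G$ containing $G'=\langle a^{p^{m-1}}\rangle$. By Theorem~\ref{thmMetabelian} together with Lemma~\ref{lemSSPcomm}, the non-commutative Wedderburn components of $\qq[G]$ correspond to pairs $(A,K)$ with $K\leq A$, $A/K$ cyclic, and $G'\not\subseteq K$. Since $[G:A]=p$, any such $K$ is either $G$-normal (Case A: $K$ is fixed by the conjugation automorphism $\sigma\colon a\mapsto a^{1+p^{m-1}}$, $b^p\mapsto b^p$) or satisfies $N_G(K)=A$ (Case B). By Proposition~\ref{propSShodaPair}, the associated component is $M_p(\qq(\xi_{[A:K]}))$ in Case B (automatically a matrix component of reduced degree $p$), while in Case A it is a cyclic algebra of degree $p$ that becomes a matrix component precisely when it splits (Proposition~\ref{propCyclicAlg}(ii)).

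A first matrix component always arises from the Case A pair $(A,\langle b^p\rangle)$: $A/\langle b^p\rangle\cong\langle a\rangle$ is cyclic of order $p^m$, the twisting is trivial because $b^pK=K$, and the component is therefore $M_p(\qq(\xi_{p^{m-1}}))$ as in Lemma~\ref{lemDecNilpBJ1}. To exhibit a second matrix component when $(p,m,n)\neq(2,2,2)$ I would consider two constructions. If $(p,m)\neq(2,2)$, take $(A,K_1)$ with $K_1=\langle a^{p^{m-1}}b^p\rangle$; since $\sigma$ fixes $a^{p^{m-1}}b^p$ (using $p^{2m-2}\equiv 0\pmod{p^m}$), this is Case A, $A/K_1\cong\langle a\rangle$ is cyclic of order $p^m$, and the twisting works out to $\xi_p^{-1}$, so the cyclic algebra is $(\qq(\xi_{p^m})/\qq(\xi_{p^{m-1}}),\sigma,\xi_p^{-1})$. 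Here the key step is the norm computation $N_{\qq(\xi_{p^m})/\qq(\xi_{p^{m-1}})}(\xi_{p^m})=\xi_{p^{m-1}}$ for $p$ odd and $N(\xi_{2^m})=-\xi_{2^{m-1}}$ for $p=2,m\geq 3$, from which raising to the $p^{m-2}$th power shows $\xi_p^{-1}$ (respectively $-1$) is a norm, so by Proposition~\ref{propCyclicAlg}(ii) the algebra splits and yields $M_p(\qq(\xi_{p^{m-1}}))$. If instead $(p,m)=(2,2)$ with $n\geq 3$, one verifies that $ab^p$ has order $2^{n-1}\geq 4$ and $\sigma(ab^p)=a^{-1}b^p$ does not lie in $\langle ab^p\rangle$ (any equation $a^{-1}b^p=(ab^p)^i$ forces $i\equiv 1\pmod{2^{n-1}}$ and hence $-1-i\equiv 0\pmod 4$, impossible), so $(A,\langle ab^p\rangle)$ is a Case B pair and gives $M_2(\qq(i))$.

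Finally, for $p=m=n=2$, a direct enumeration of the eight subgroups of $A\cong C_4\times C_2$ leaves only $K=\langle b^2\rangle$ and $K=\langle a^2b^2\rangle$ satisfying both $A/K$ cyclic and $G'=\langle a^2\rangle\not\subseteq K$; both are $\sigma$-invariant. The first has trivial twisting and produces $(\qq(i)/\qq,\sigma,1)\cong M_2(\qq)$, while the second has twisting $-1$ and produces $(\qq(i)/\qq,\sigma,-1)\cong\hh=M_1(\hh)$, which has reduced degree $1$ and is \emph{not} a matrix component. Hence $\qq[G]$ has exactly one matrix component, namely $M_2(\qq)$. The technical heart of the argument is the dichotomy in the norm computation: $\xi_p^{-1}$ lies in $N_{\qq(\xi_{p^m})/\qq(\xi_{p^{m-1}})}(\qq(\xi_{p^m})^*)$ precisely when $(p,m)\neq(2,2)$, the obstruction at $(p,m)=(2,2)$ being that the norm form $x^2+y^2$ on $\qq$ cannot represent $-1$. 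The delicate point is then that for $(p,m,n)=(2,2,n)$ with $n\geq 3$ no analogous Case A pair rescues us, and one must switch to a Case B pair.
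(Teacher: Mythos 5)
Your argument is correct, but it takes a genuinely different route from the paper's in the ``only if'' direction. The paper works with the $p$ central subgroups $K_i=\langle a^{ip^{m-1}}b^p\rangle$, shows via \cite[Problem~3.4.3]{JesRio} that they yield distinct non-commutative components, invokes Roquette's theorem to force $p=2$ (for odd $p$ each of these components has Schur index $1$, so there are at least $p$ matrix components), and then for $p=2$ argues that the $(A,K_1)$-component would have to be a totally definite quaternion algebra, bounding $m$ and $n$ through central roots of unity of the form $a^2e$ and $b^2e$. You instead identify a second component explicitly: the pair $(A,\langle a^{p^{m-1}}b^p\rangle)$ gives the cyclic algebra $(\qq(\xi_{p^m})/\qq(\xi_{p^{m-1}}),\sigma,\xi_p^{-1})$, and your norm computations ($N(\xi_{p^m})=\xi_{p^{m-1}}$ for odd $p$, $N(\xi_{2^m})=-\xi_{2^{m-1}}$ for $p=2$) correctly show it splits exactly when $(p,m)\neq(2,2)$; for $(p,m)=(2,2)$ and $n\geq 3$ you pass to the non-normal pair $(A,\langle ab^2\rangle)$, which by Proposition~\ref{propSShodaPair} gives $M_2(\qq(i))$; and for $(p,m,n)=(2,2,2)$ your enumeration recovers exactly $M_2(\qq)\oplus\hh_{\qq}$ as the non-commutative part, matching the decomposition the paper quotes from \cite{Gir}. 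What your route buys is a self-contained, purely computational proof (no Roquette's theorem, no totally definite quaternion algebras) that also exhibits the extra matrix components explicitly; in particular it handles $m=2$, $n\geq 3$ cleanly, where the $(A,K_1)$-component is $\hh_{\qq}$ for every $n$ (note $b^4=(a^2b^2)^2\in K_1$, so $b^2e$ has order $2$ there), so no contradiction can be extracted from that component alone and a second pair such as your $(A,\langle ab^2\rangle)$ is genuinely needed at that point.

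One small gap to fill: when $(p,m)\neq(2,2)$ your two matrix components are isomorphic as algebras, and you never verify that $(A,\langle b^p\rangle)$ and $(A,\langle a^{p^{m-1}}b^p\rangle)$ determine \emph{distinct} primitive central idempotents; distinct strong Shoda pairs can produce the same idempotent (exactly this happens for the conjugate pairs in the type (BJ2) analysis). Here it is immediate: both subgroups are central, so $e(G,A,K)=\varepsilon(A,K)=\widetilde{K}(1-\widetilde{G'})$ in both cases, and since the product of the two subgroups contains $a^{p^{m-1}}$, hence $G'$, the two idempotents are orthogonal and nonzero, hence distinct (the paper disposes of this point by citing \cite[Problem~3.4.3]{JesRio}). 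Adding that one line makes your proof complete.
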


\begin{proof}
First of all, if $p = m = n = 2$, then $$\qq[G] \simeq 4 \qq \oplus 2 \qq(i) \oplus \hh_{\qq} \oplus M_2(\qq)$$ by \cite[Group Type 16/10, p. 195]{Gir} which has exactly one matrix component. Assume that $\qq[G]$ has only one matrix component. We will prove that $p = m = n = 2$. Let $A = \langle a, b^p \rangle = \langle a \rangle \times \langle b^p \rangle$. Then $A$ is of index $p$ in $G$ and it is a maximal abelian subgroup of $G$ containing $G' = \langle a^{p^{m-1}} \rangle$. 
Consider the subgroups  $K_i = \langle a^{i p^{m-1}} b^p \rangle$ for $i = 0, 1, \ldots, p-1$. Note that $G' \not\subseteq K_i$ because $n\geq 2$. Also note that each $(A, K_i)$ is a strong Shoda pair by Theorem~\ref{thmMetabelian}. Moreover, $K_i$ is central and $A$ is normal in $G$. It follows that $A^g \cap K_i \neq K_j^g \cap A$ for $i \neq j$ and every $g \in G$. By \cite[Problem~3.4.3]{JesRio}, all $\qq[G] e(G, A, K_i)$ are distinct non-commutative Wedderburn components. Because $G$ is a $p$-group, Roquette's theorem (see \cite{Roq} or \cite[Corollary~13.5.5]{JesRio}) implies that the Schur index of each Wedderburn component of $\qq[G]$ is at most $2$. If $p$ is odd, then the Schur index must be $1$ since it divides the order of $G$. So every non-commutative Wedderburn component is a matrix component. In this case, $\qq[G]$ has at least $p$ matrix components. Hence $p=2$.

Note that $e(G, A, K_0) = \widetilde{b^2} (1 - \widetilde{G'})$ and then one has $\qq[G] e(G, A, K_0) \simeq (1 - \widetilde{\overline{G}'}) \qq[\overline{G}] \simeq M_2(\qq(\xi_{2^{m-1}}))$ by Lemma~\ref{lemDecNilpBJ1} where $\overline{G} = G / \langle b^2  = \langle \overline{a} \rangle_{2^m} \rtimes \langle \overline{b} \rangle_{2}$ is a group of type (BJ1). Thus, by the assumption that $\qq[G]$ has only one matrix component, $\qq[G] e(G, A, K_1)$ is a non-commutative division algebra and we denote it as $D$. By \cite[Corollary~13.5.4]{JesRio}, $D$ is a totally definite quaternion algebra. Thus the center of $D$ does not contain roots of unity of order greater than $2$. Note that $e(G, A, K_1) = \widetilde{K_1} - \widetilde{A}$ and, moreover, $a^{2^k} \widetilde{K_1} \neq \widetilde{K_1}$ for $0 < k < m$ and $b^{2^l} \widetilde{K_1} \neq \widetilde{K_1}$ for $0 < l < n$. Thus $a^{2^k} e(G, A, K_1) \neq e(G, A, K_1)$ and $b^{2^l} e(G, A, K_1) \neq e(G, A, K_1)$ since $A$ contains $a^2$ and $b^2$. As a consequence, $a^2 e(G, A, K_1)$ and $b^2 e(G, A, K_1)$ are two central elements of $D$ of orders $2^{m-1}$ and $2^{n-1}$,  respectively. It follows that $m = n = 2$.
\end{proof}

The two previous lemmas yield the following result.

\begin{prop}
\label{propDecNilBJ1}
Let $G = \langle a, b \mid a^{p^m} = b^{p^n} = 1, \, a^b = a^{1+p^{m-1}} \rangle$ be of type \textup{(BJ1)}, where $p$ is a prime, $m \geq 2$ and $n \geq 1$. Then, $\qq[G]$ has only one matrix component if and only if either $n=1$ or $p = m = n = 2$. In particular, the matrix component is isomorphic to $M_p(\qq(\xi_{p^{m-1}}))$.
\end{prop}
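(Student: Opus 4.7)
The plan is to assemble the proposition directly from the two preceding lemmas, which together exhaust the possibilities by splitting on whether $n=1$ or $n \geq 2$. Each of the two cases has already been analysed via Theorem~\ref{thmMetabelian} and Proposition~\ref{propSShodaPair}, so no fresh computation is needed; I only need to check that the conclusions line up.

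For the forward implication, I would assume $\qq[G]$ has only one matrix component. If $n=1$ there is nothing more to show. If $n \geq 2$, then Lemma~\ref{lemDecNilpBJ1-2} immediately forces $p=m=n=2$.

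For the reverse implication, I would split into the same two cases. If $n=1$, Lemma~\ref{lemDecNilpBJ1} delivers exactly one matrix component, isomorphic to $M_p(\qq(\xi_{p^{m-1}}))$, which is exactly the claimed form. If instead $p=m=n=2$, Lemma~\ref{lemDecNilpBJ1-2} gives exactly one matrix component, isomorphic to $M_2(\qq)$; this agrees with the advertised formula because $\xi_{p^{m-1}} = \xi_2 = -1 \in \qq$, so $M_p(\qq(\xi_{p^{m-1}})) = M_2(\qq(\xi_2)) = M_2(\qq)$.

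I do not foresee any real obstacle: the substantive analysis of strong Shoda pairs avoiding $G'$ and the identification of the resulting cyclic algebras are already carried out in the two lemmas. The only point worth flagging explicitly in the write-up is the unification of the Wedderburn description $M_p(\qq(\xi_{p^{m-1}}))$ across both regimes, which is immediate from $\xi_2 \in \qq$.
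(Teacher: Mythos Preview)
Your proposal is correct and matches the paper's approach exactly: the paper states the proposition immediately after the two lemmas with only the sentence ``The two previous lemmas yield the following result,'' so the entire content is the case split on $n=1$ versus $n\geq 2$ that you describe. Your explicit check that $M_2(\qq(\xi_2))=M_2(\qq)$ unifies the description of the matrix component is a nice touch that the paper leaves implicit.
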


\vspace{12pt}
{\it Groups of type \textup{(BJ2)}}\\

Let $G = G_0 \Ydown Z$, the central product of a nonabelian group $G_0$ of order $p^3$ and  a cyclic $p$-group $Z$  such that $G_0 \cap Z = \mathcal{Z}(G_0)$, and $|Z| > 2$ if $p=2$. Note that $G' = G_0' = \mathcal{Z}(G_0) \simeq C_p$ and $G_0 / \mathcal{Z}(G_0) \simeq C_p \times C_p$. Write $G_0 / \mathcal{Z}(G_0) = \langle \overline{a} \rangle \times \langle \overline{b} \rangle$ for $a, b \in G_0$. Note also that one can choose $a$ such that $|a| = p$. Then $a^b = b a b^{-1} = a z_0$ for some $z_0 \in \mathcal{Z}(G_0)$. If $z_0 = 1$, then $G$ is abelian because $G = \langle a, b, \mathcal{Z}(G_0) \rangle$. Thus, $z_0 \neq 1$ and $\mathcal{Z}(G_0) = \langle z_0 \rangle$. 

\begin{lem}
\label{lemDecNilpBJ2}
Let $G$ be a group of type \textup{(BJ2)}. Then $\qq[G]$ has only one matrix component which is isomorphic to $M_p(\qq(\xi_{|Z|}))$.
\end{lem}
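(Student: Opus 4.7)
My plan is to apply Theorem~\ref{thmMetabelian} directly to the metabelian group $G$. First I would take $A=\langle a\rangle Z$ and verify this is a maximal abelian subgroup of $G$ containing $G'=\mathcal{Z}(G_0)=\langle z_0\rangle$. Since $Z\subseteq \mathcal{Z}(G)$, since $|a|=p$, and since $\langle a\rangle\cap Z=1$ (as $\overline{a}$ is nontrivial in $G/Z$), one gets $A=\langle a\rangle\times Z\cong C_p\times C_{p^m}$ where $|Z|=p^m$; moreover $[G:A]=p$, and the nonabelianness of $G$ forces $A$ to be maximal abelian. Because $G/A$ is cyclic, $G$ is metabelian and Theorem~\ref{thmMetabelian} applies.

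Next I would isolate the pairs $(H,K)$ attached to non-commutative Wedderburn components. By Lemma~\ref{lemSSPcomm} this amounts to demanding $z_0\notin K$, which forces $H\neq G$; since $[G:A]=p$ the only remaining option is $H=A$, and $K$ then ranges over subgroups of $A$ with $A/K$ cyclic and $z_0\notin K$. Since $\Omega_1(A)=\langle a,z_0\rangle\cong C_p\times C_p$, the cyclicity of $A/K$ forces $|K\cap\Omega_1(A)|\geq p$, and $z_0\notin K$ rules out both $K\cap\Omega_1(A)=\Omega_1(A)$ and $K\cap\Omega_1(A)=\langle z_0\rangle$; so $K\cap\Omega_1(A)=\langle az_0^j\rangle$ for some $j\in\{0,\ldots,p-1\}$. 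A short calculation in $\langle a\rangle\times\langle z\rangle$ then shows that any subgroup of the form $\langle az_0^j,\,z^{p^i}\rangle$ with $i<m$ actually contains $z_0$, so $K=\langle az_0^j\rangle$ is forced.

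I would then collapse these $p$ candidates into a single primitive central idempotent. Because $z_0$ is central in $G$ and $a^b=az_0$, the element $b$ cyclically permutes the subgroups $\langle az_0^j\rangle$ via $(az_0^j)^b=az_0^{j+1}$; hence $\varepsilon(A,\langle az_0^j\rangle)^b=\varepsilon(A,\langle az_0^{j+1}\rangle)$, and all the idempotents $e(G,A,\langle az_0^j\rangle)$ coincide. Thus $\qq[G]$ has exactly one non-commutative Wedderburn component.

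Finally, I would apply Proposition~\ref{propSShodaPair} to the strong Shoda pair $(A,\langle a\rangle)$. Since $a^b=az_0\notin\langle a\rangle$ (using $\langle a\rangle\cap Z=1$), no element outside $A$ normalises $\langle a\rangle$, so $N_G(\langle a\rangle)=A$; hence $n=[G:N]=p$, the quotient $N/H$ is trivial, and the crossed product reduces to the field $\qq(\xi_{|Z|})$. This identifies the component as $M_p(\qq(\xi_{|Z|}))$, which in particular is a matrix ring rather than a division algebra. The main technical hurdle is the subgroup enumeration in $C_p\times C_{p^m}$: one has to show carefully that the cyclic-quotient condition together with $z_0\notin K$ really pins $K$ down to one of the $p$ minimal candidates $\langle az_0^j\rangle$, since any missed $K$ could produce an extra non-commutative component and invalidate the "only one" conclusion.
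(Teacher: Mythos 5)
Your proposal is correct and follows essentially the same route as the paper's proof: take the maximal abelian subgroup $A=\langle a\rangle Z$, use Theorem~\ref{thmMetabelian} and Lemma~\ref{lemSSPcomm} to pin the non-commutative pairs down to $(A,\langle az_0^{\,j}\rangle)$, observe that conjugation by $b$ identifies all these idempotents, and then read off $M_p(\qq(\xi_{|Z|}))$ from Proposition~\ref{propSShodaPair} with $N_G(K)=A$ and $N/H$ trivial. The only difference is that you spell out the enumeration of admissible $K$ (via $\Omega_1(A)$ and the overgroups of $\langle az_0^{\,j}\rangle$), a step the paper asserts without detail.
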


\begin{proof}
Let $G = G_0 \Ydown Z$, $z_0$, $a$, $b$ be as above and let $A = \langle a \rangle Z$. Then $A = \langle a \rangle_p \times \langle z \rangle_{p^n}$ is of index $p$ in $G$ where $Z = \langle z \rangle$ such that $z^{p^{n-1}} = z_0$. In other words, $A$ is a maximal abelian subgroup of $G$ containing $G' = \langle z_0 \rangle$. 
Moreover, $G$ is metabelian since $G' $ is abelian.
We will show that $\qq[G]$ has only one primitive central idempotent corresponding to a non-commutative Wedderburn component.   Let $(H,K)$ be a pair of subgroups of $G$ as in Theorem~\ref{thmMetabelian} such that $G' \not\subseteq K$, i.e., because of Lemma~\ref{lemSSPcomm}, a pair that does determine a non-commutative simple component. Since $A$ is of index $p$ in $G$ and $K \not\supseteq G'$, we can conclude that $H = A$ and $A / K$ is cyclic. 
 It follows that $K$ must be one of the groups $K_i = \langle a z^{i p^{n-1}} \rangle$ for $i = 0, 1, \ldots, p-1$. Recall that $G_0 = \langle a, b \rangle$ with $a^b = a z_0 = a z^{p^{n-1}}$. This implies that the groups $K_i$ are conjugate. So all the $e(G, A, K_i)$ are equal. Thus $\qq[G]$ has a unique non-commutative Wedderburn component. Furthermore, one has $N_G(K) = A$, $|A| = p |Z|$ and $|K| = p$. By Proposition~\ref{propSShodaPair}, $\qq[G] e(G, A, K) \simeq M_p(\qq(\xi_{|Z|}))$.
\end{proof}

\vspace{12pt}
{\it Groups of type \textup{(BJ3)}}\\

Note that $\qq[Q_8 \times C_4] \simeq 2(4 \qq \oplus \hh_{\qq}) \oplus 4 \qq(\sqrt{-1}) \oplus M_2(\qq(\sqrt{-1}))$ has only one matrix component. The following result imitates from \cite[Lemma 2.13]{Liu} that $Q_8 \times C_8$ does not have ND. For convenience, we denote by $\equiv_n$ the equivalence relation on $\zz[G]$ modulo $n$ for $n \in \nn$. In other words, $\alpha \equiv_n \beta$ if $\alpha - \beta \in n \zz[G]$.

\begin{lem}
\label{lemDecNilpBJ3}
$G = Q_8 \times C_{2^n}$ does not have \textup{ND} for $n \geq 3$ and hence $\qq[G]$ has more than one matrix component.
\end{lem}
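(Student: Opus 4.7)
The plan is to invoke Lemma~\ref{lemDecNilp}, whose ``in particular'' clause asserts that $H \times N$ fails \textup{ND} as soon as $\qq[H]$ contains a nonzero nilpotent element and $N$ is a nontrivial finite group. Since $n \geq 3$, I will decompose
$$Q_8 \times C_{2^n} \;=\; (Q_8 \times C_4) \times C_{2^{n-2}},$$
so that $C_{2^{n-2}}$ is nontrivial. The Wedderburn decomposition
$$\qq[Q_8 \times C_4] \simeq 2(4\qq \oplus \hh_{\qq}) \oplus 4\qq(\sqrt{-1}) \oplus M_2(\qq(\sqrt{-1})),$$
already recorded in the discussion preceding this lemma, supplies the matrix component $M_2(\qq(\sqrt{-1}))$ and hence a nonzero nilpotent element of $\qq[Q_8 \times C_4]$. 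Applying Lemma~\ref{lemDecNilp} with $H = Q_8 \times C_4$ and $N = C_{2^{n-2}}$ will then deliver the failure of \textup{ND} for $G = Q_8 \times C_{2^n}$.

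For the final clause, I will appeal to the contrapositive of Lemma~\ref{lemOneMatrix}: a finite group whose rational group algebra has at most one matrix component automatically enjoys \textup{ND}, so failure of \textup{ND} forces at least two matrix components in $\qq[G]$.

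No step in this plan is genuinely hard; the only subtlety is that the hypothesis $n \geq 3$ is being used precisely to make $C_{2^{n-2}}$ nontrivial, so the decomposition argument does not apply at $n = 2$. In fact, for $n = 2$ the algebra $\qq[Q_8 \times C_4]$ already has a unique matrix component, and Lemma~\ref{lemOneMatrix} yields \textup{ND} directly; this is exactly why the statement of the present lemma is restricted to $n \geq 3$.
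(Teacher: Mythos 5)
Your argument collapses at the very first step: the decomposition $Q_8 \times C_{2^n} = (Q_8 \times C_4) \times C_{2^{n-2}}$ is false. The group $C_{2^n}$ is cyclic, hence directly indecomposable, while $C_4 \times C_{2^{n-2}}$ is not cyclic for $n \geq 3$; so $(Q_8\times C_4)\times C_{2^{n-2}}$ is a different group from $G=Q_8\times C_{2^n}$ (it is the group handled by the ``in particular'' clause of Lemma~\ref{lemDecNilp}, but it is not the group of the present lemma). Consequently Lemma~\ref{lemDecNilp} cannot be invoked in the way you propose, and the whole proof fails; only your last remark (that failure of ND forces more than one matrix component, by the contrapositive of Lemma~\ref{lemOneMatrix}) is sound, and it depends on the part that is broken.

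It is worth noting that this route cannot be repaired by a cleverer choice of subgroups either. Writing $Q_8=\langle a,b\rangle$, $z=a^2$, $C_{2^n}=\langle x\rangle$, the first statement of Lemma~\ref{lemDecNilp} would require a subgroup $H\leq G$ with $\qq[H]$ containing nonzero nilpotent elements and a nontrivial normal subgroup $N\trianglelefteq G$ with $H\cap N=1$. Since $G$ is a $2$-group, any nontrivial $N\trianglelefteq G$ meets $\mathcal{Z}(G)=\langle z\rangle\times\langle x\rangle$ and hence contains one of the three central involutions $z$, $x^{2^{n-1}}$, $zx^{2^{n-1}}$. But any subgroup $H$ of $G$ for which $\qq[H]$ has nilpotent elements is nonabelian and not of the form $Q_8\times E$ with $E$ elementary abelian, and a short computation with the projection onto $\langle x\rangle$ shows that every such $H$ contains all three of these involutions (it contains $z$ because it is nonabelian, and it contains an element $gx^{2^k}$, $g\in Q_8$, whose square or fourth power produces $x^{2^{n-1}}$ or $zx^{2^{n-1}}$). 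So $H\cap N\neq 1$ always, and no trivial-intersection argument applies. This is exactly why the paper's proof is an explicit construction: following Liu's treatment of $Q_8\times C_8$, it builds a concrete square-zero element $\alpha=\tfrac12\bigl(r(1-t)+s(1-t)^3\bigr)\in\zz[G]$ with $t=x^{2^{n-3}}$ (here the hypothesis $n\geq 3$ is used, not to split off a direct factor), and checks that $\widetilde{t^4}\,\alpha\notin\zz[G]$. You would need to supply an argument of this computational kind to prove the lemma.
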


\begin{proof}
Let $Q_8 = \langle a, b \mid a^4 = 1, b^2 = a^2, a^b = a^{-1}\rangle$ and $C_{2^n} = \langle x \rangle$. Write $z = a^2 = b^2$. Then $b a = a b z$. Note that $z, t$ are central. Denote $t = x^{2^{n-3}}$. Let $$r = (a + b t) ( 1 - t^2) (1 + t^4) (1 - z) \quad \text{and} \quad s = (a + b t^2) (1 - t^4) (1 - z).$$ Clearly, $r s = s r = 0$ since $t^8 = 1$. Observe that $(a+b t)^2 = z (1 + t^2) + a b t (1 + z)$ and $(a+b t^2)^2 = z (1+ t^4) + a b t^2 (1+z)$. So we have $r^2 = s^2 = 0$ due to $t^8 = 1$ and $z^2 = 1$. Let $$\alpha = \frac{1}{2} (r (1-t) + s (1-t)^3).$$ Then $\alpha^2 = 0$. (We do not have to check $\alpha \neq 0$ when we have $e \alpha \not\in \zz[G]$ below.) We claim that $\alpha \in \zz[G]$. Note that $1 - t \equiv_2 1 + t$ and $(1+t^{2^{k}}) \equiv_2 (1+t)^{2^k}$ for every $k \in \nn$.  Thus, $r(1-t) \equiv_2 (a+b t) (1+t)^7 (1-z)$ and $s (1-t)^3 \equiv_2 (a + b t^2) (1+t)^7 (1-z)$. Moreover, $(a + b t) + (a + b t^2) \equiv_2 b t (1+t)$. Thus, $2 \alpha \equiv_2 b t (1+t)^8 (1-z) \equiv_2 b t (1+t^8) (1-z) \equiv_2 0$. In other words, $\alpha \in \zz[G]$.

Let $e = \widetilde{t^4}$ which is a central idempotent. We have $e r = r$ and $e s = 0$. Thus, $e \alpha = r (1-t) / 2$. Observe that $r (1-t) \equiv_2 (a+b t) (1+t^7) (1-z) \not\equiv_2 0$ since the coefficient of $a$ is odd. Hence, $e \alpha \not\in \zz[G]$, as desired.
\end{proof}

\vspace{12pt}
 
{\it Groups of type \textup{(BJ4)}}\\

Let $G$ be a group of type  (BJ4). The proof of \cite[Lemma 3.8]{LiuPas3} shows that $$G \simeq \langle x, y, z \mid x^9 = y^3 = 1, x y = y x, x^z = x y, y^z = x^{-3} y , z^3 = x^3 \rangle.$$ As in the proof of \cite[Lemma 2.1]{LiuPas2}, one shows that $G$ does not have ND. 

\begin{lem}[{\cite[Lemma 2.1]{LiuPas2}}]
\label{lemDecNilpBj4}
The group above does not have \textup{ND}.
\end{lem}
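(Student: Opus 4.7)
The goal is to exhibit explicitly a nilpotent element $\alpha \in \zz[G]$ together with a primitive central idempotent $e \in \qq[G]$ such that $\alpha e \notin \zz[G]$.

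First I would record the structural data. Since $z^{3}=x^{3}$ and $|x|=9$, we have $|z|=9$ and hence $|G|=81$; a direct computation from $x^{z}=xy$ and $y^{z}=x^{-3}y$ gives $[x,z]=y$ and $[y,z]=x^{-3}$, so $G'=\langle x^{3},y\rangle=\Omega_{1}(G)\simeq C_{3}\times C_{3}$ while $\mathcal{Z}(G)=\langle x^{3}\rangle\simeq C_{3}$. The subgroup $A:=\langle x,y\rangle\simeq C_{9}\times C_{3}$ is a maximal abelian normal subgroup of index $3$ and contains $G'$, so $G$ is metabelian. Theorem~\ref{thmMetabelian} then gives the primitive central idempotents of $\qq[G]$ as $e(G,A,K)$ with $A/K$ cyclic, and Lemma~\ref{lemSSPcomm} singles out those with $G'\not\subseteq K$ as the ones corresponding to non-commutative Wedderburn components.

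Next I would build a candidate nilpotent. A generic starting block is $\beta=(1-y)\,z\,\widehat{y}$, which is nilpotent because $\widehat{y}(1-y)=0$. Using $yz=x^{-3}zy$, this simplifies to $\beta=(1-x^{-3})z\,\widehat{y}$, supported on six group elements. A single $\beta$ is insufficient: multiplying by $\varepsilon(A,\langle y\rangle)=\widetilde{y}-\widetilde{G'}$ yields $(1-x^{-3})(z+zy+zy^{2})$, which still lies in $\zz[G]$. Following the template of Lemma~\ref{lemDecNilpBJ3}, I would combine several carefully weighted conjugates of $\beta$ by powers of $z$ (and central elements of $\langle x^{3}\rangle$) to form
\[
\alpha=\tfrac{1}{3}\bigl(\beta_{0}+\beta_{1}+\beta_{2}\bigr)\in\zz[G],
\]
where the mod-$3$ integrality of $\alpha$ is secured by cancellations across the $\langle z\rangle$-orbits inside $G'$, while each $\beta_{i}$ is individually nilpotent with pairwise vanishing products, so that $\alpha$ is nilpotent.

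Finally, I would choose a primitive central idempotent $e=e(G,A,K)$ whose defining subgroup $K$ is not $\langle z\rangle$-invariant --- for instance $K=\langle y\rangle$ with $K^{z}=\langle x^{-3}y\rangle\neq K$ --- so that $e$ is the sum of three distinct $G$-conjugate summands $\varepsilon(A,K)^{z^{i}}$, and the delicate cancellation that made $\alpha$ integral is destroyed after projection. The claim is then that the coefficient of some concrete group element in $\alpha e$ equals $\pm 1/3$. The main obstacle is essentially bookkeeping: the twisted action $x^{z}=xy$, $y^{z}=x^{-3}y$ permutes $G'$ in orbits of length $3$ under $\langle z\rangle$, and one must orchestrate the weights of $\beta_{0},\beta_{1},\beta_{2}$ so that \emph{every} orbit cancels in $\alpha$ but \emph{some} orbit fails to cancel in $\alpha e$. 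This delicate $3$-adic calculation is precisely the content of \cite[Lemma 2.1]{LiuPas2}, on which the authors rely.
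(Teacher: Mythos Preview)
The paper gives no proof of this lemma; it simply cites \cite[Lemma~2.1]{LiuPas2}, and you end in exactly the same place, so at the level of what is actually established the two agree.

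That said, one step of your sketch is imprecise enough to mislead. You propose building $\alpha=\tfrac{1}{3}(\beta_{0}+\beta_{1}+\beta_{2})$ from ``carefully weighted conjugates of $\beta$ by powers of $z$'' and then assert that the $\beta_{i}$ have pairwise vanishing products. Conjugation certainly preserves nilpotency, but it does nothing to force $\beta_{i}\beta_{j}=0$ for $i\neq j$; conjugates of $(1-x^{-3})z\widehat{y}$ by powers of $z$ all live in the coset $Az$ on the left and involve $\widehat{y}^{z^{i}}$ on the right, and there is no reason the products should collapse. In the explicit constructions the paper does carry out for the parallel cases (see Lemma~\ref{lemDecNilpBJ3} for (BJ3), or the elements used in Lemmas~\ref{lemDecNilpBJ8}--\ref{lemDecNilpBJ9}), the summands are \emph{not} conjugates of a single element: they are separately designed so that the relevant annihilating factors (e.g.\ $(1-t^{2})(1+t^{4})$ versus $(1-t^{4})$) guarantee orthogonality. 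The Liu--Passman argument for (BJ4) is of the same flavour, and your ``template'' description via conjugates does not capture it. Your structural analysis of $G$, the choice of $A=\langle x,y\rangle$, and the candidate idempotent $e(G,A,\langle y\rangle)$ are all correct, but the heart of the matter---the actual nilpotent $\alpha$---is not something you have produced, and your outline of how it would arise is not quite right.
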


{\it Groups of type \textup{(BJ5)}}\\

Similarly, the proof of \cite[Lemma 2.15]{Liu} shows that groups of type (BJ5) do not have ND.  \\

\begin{lem}[{\cite[Lemma 2.15]{Liu}}]
\label{lemDecNilpBJ5}
Let $G = \langle a, b \mid a^8 = b^8 = 1, a^4 = b^4, a^b = a^{-1} \rangle$ be the group of type \textup{(BJ5)}. Then $G$ does not have \textup{ND}.
\end{lem}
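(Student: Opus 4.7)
The plan is to mimic the constructions in Lemma~\ref{lemDecNilpBJ3} and Lemma~\ref{lemDecNilpBj4}: exhibit a concrete nilpotent $\alpha \in \zz[G]$ and a central idempotent $e \in \qq[G]$ for which the coefficient of some group element in $e\alpha$ is a genuine half‑integer, so that $e\alpha \notin \zz[G]$.

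First I would unpack the structure of $G$. One has $|G| = 32$; the element $z := a^4 = b^4$ is central of order $2$; a direct computation $a^{b^2} = (a^{-1})^b = a$ shows that $b^2$ is central as well, so $\mathcal{Z}(G) \supseteq \langle b^2 \rangle$ is cyclic of order $4$; the commutator subgroup is $G' = \langle a^2 \rangle$ of order $4$; and $\langle a^2, b\rangle \simeq Q_{16}$ sits as an index‑$2$ subgroup. These data give a ready supply of central idempotents, in particular $\widetilde{\langle z\rangle} = \tfrac{1+z}{2}$, $\widetilde{\langle b^2\rangle}$, and the strong Shoda pair idempotents $\varepsilon(H,K)$ guaranteed by Theorem~\ref{thmMetabelian}.

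Next, the nilpotent element $\alpha$ is to be built by exploiting the defining relation $a^4 = b^4$: this is the only link between the two cyclic generators capable of forcing a mod‑$2$ cancellation. Concretely, I expect $\alpha$ to take the form $\tfrac{1}{2}\bigl(r(1-x) + s(1-x)^3\bigr)$, with $x$ a suitable central element of order $8$ inside $\langle b\rangle$ and $r, s \in \zz[G]$ products of commutator‑type factors such as $(a+b x^i)$, $(1\pm x^{2^j})$, and $(1-z)$. The orthogonality relations should be rigged so that $r^2 = s^2 = rs = sr = 0$ and so that $r(1-x)+s(1-x)^3 \equiv 0 \pmod 2$ in $\zz[G]$; the latter rests on the congruences $(1-x^{2^k}) \equiv (1-x)^{2^k} \pmod 2$ and on $a^4 = b^4$, exactly as in Lemma~\ref{lemDecNilpBJ3}. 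The nilpotency $\alpha^2=0$ then follows immediately from the orthogonalities.

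Finally, to show $e\alpha \notin \zz[G]$, I would pick a central idempotent $e$ (a natural choice is $e = \widetilde{x^4}$, or a component idempotent of $\qq[\langle b\rangle]$) that preserves $r$ and annihilates $s$. Then $e\alpha = \tfrac{1}{2}\,r(1-x)$, and a mod‑$2$ inspection of $r(1-x)$, using $(1-x) \equiv (1+x)\pmod 2$ to rewrite the product as $(a+bx^i)(1+x)^7(1-z)$ or similar, exhibits an odd coefficient at some basis element of $\zz[G]$. The main obstacle is engineering the factors $r$ and $s$: the twisted commutation $ab = ba^{-1}$ together with $a^4=b^4$ must be combined to force the required mod‑$2$ collapse while respecting the orthogonality that yields $\alpha^2 = 0$. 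Once $r$ and $s$ are pinned down, every subsequent step is routine arithmetic modulo $2$, and the conclusion that $G$ fails \ND{} is immediate.
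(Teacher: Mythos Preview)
Your template from Lemma~\ref{lemDecNilpBJ3} hinges on having a \emph{central} element $x$ of order $8$, but in the (BJ5) group no such element exists. A short check shows $\mathcal Z(G)=\langle b^{2}\rangle$: since $a^{b}=a^{-1}$ we have $ab=ba^{-1}$, so $a^{i}b^{j}$ commutes with $a$ only when $j$ is even and with $b$ only when $4\mid i$; together with $a^{4}=b^{4}$ this gives $\mathcal Z(G)=\{1,b^{2},b^{4},b^{6}\}$, a cyclic group of order~$4$. Thus there is no ``suitable central element of order $8$ inside $\langle b\rangle$'' on which to base the factors $(1-x^{2})(1+x^{4})$ and the idempotent $e=\widetilde{x^{4}}$; the whole orthogonality/mod-$2$ machinery you sketch collapses at the outset. (Incidentally, your side claim $\langle a^{2},b\rangle\simeq Q_{16}$ is also off: from $a^{2}b=ba^{-2}$ one gets $a^{2}ba^{-2}=ba^{-4}=b\cdot b^{4}=b^{5}$, so conjugation by $a^{2}$ sends $b\mapsto b^{5}$, and the subgroup is the modular group $M_{4}(2)$, not $Q_{16}$.)

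The paper itself does not give an argument here; it simply invokes \cite[Lemma~2.15]{Liu}. That proof does produce an explicit nilpotent $\alpha\in\zz[G]$ and a central idempotent $e$ with $e\alpha\notin\zz[G]$, but the construction is tailored to the relation $a^{b}=a^{-1}$ together with $a^{4}=b^{4}$ and does \emph{not} proceed via a central order-$8$ element. If you want to recover a working argument, you must either (i) go back to Liu's explicit element, or (ii) redesign $r,s$ so that the required squarings and mod-$2$ collapses are driven by the noncommutative relation $ab=ba^{-1}$ and the coincidence $a^{4}=b^{4}$, using only the order-$4$ central idempotents available (e.g.\ $\widetilde{b^{4}}$ or $\widetilde{b^{2}}$). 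As written, the proposal does not yet contain the key idea needed for (BJ5).
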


\vspace{12pt}

{\it Groups of type \textup{(BJ6)}}\\

According to \cite[Example 3.5.7]{JesRio}, $\qq[Q_{16}] \simeq 4 \qq \oplus H \oplus M_2(\qq)$ where $\alpha = \xi_8 + \xi_8^{-1}$ and $H = \left( \frac{\alpha^2 - 4, -1}{\qq(\alpha)} \right)$ is the quaternion algebra over $\qq(\alpha)$ defined by $\alpha^2 - 4$ and $-1$. Since $\alpha$ is a real number and $\alpha^2 - 4 < 0$, it follows that $H$ is a division ring by \cite[Example 2.1.7(3)]{JesRio}. Thus, $\qq[Q_{16}]$ has only one matrix component. \\

\vspace{12pt}

{\it Groups of type \textup{(BJ7)}}\\

According to \cite[p. 123]{HalPasWil}, $\qq[Q_8 \Ydown D_8] \simeq 16 \qq \oplus M_2(\hh_{\qq})$ which has only one matrix component. \\

\vspace{12pt}

{\it Groups of type \textup{(BJ8)}}\\

As in Lemma~2.16 of \cite{Liu} one shows that the groups of type \textup{(BJ8)} do not have ND.

\begin{lem}[{\cite[Lemma 2.16]{Liu}}]
\label{lemDecNilpBJ8}
Let $G = \langle a, b, c \mid a^4 = b^4 = [a, b] = 1, c^2 = a^2, a^c = a b^2, b^c = b a^2 \rangle$ be the group of type \textup{(BJ8)}. Then $G$ does not have \textup{ND}.
\end{lem}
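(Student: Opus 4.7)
I would follow exactly the template used for Lemmas~\ref{lemDecNilpBJ3} and~\ref{lemDecNilpBJ5}: exhibit a concrete nilpotent element $\alpha\in\zz[G]$ together with a central idempotent $e$ of $\qq[G]$ such that $e\alpha\notin\zz[G]$. Since the same strategy is referenced in \cite[Lemma 2.16]{Liu}, the work is to find the right $\alpha$ and $e$ tailored to the relations of $G$.

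First I would unpack the arithmetic of $G$. Setting $z=a^2=c^2$ and $w=b^2$, the defining relations give $a^c=aw$ and $b^c=bz$, from which $[a,c]=w$, $[b,c]=z$, and so $G'=\mathcal{Z}(G)=\langle z,w\rangle\cong C_2\times C_2$; in particular $|G|=32$ and $G$ is a special $2$-group. Useful consequences for the computations are the identities $ac=caw$, $bc=cbz$, and $(ac)^2=(ac^{-1})^2=(bc)^2=w$. For the central idempotent I would try
\[
e \;=\; \widetilde{\langle w\rangle}\;=\;\tfrac{1+w}{2}
\]
(or $1-e=\tfrac{1-w}{2}$, or the analogous idempotent for $\langle z\rangle$): multiplication by such an $e$ averages a group element $g$ with its translate $gw$ and divides by $2$, so $e\alpha\in\zz[G]$ amounts to a parity condition on the coefficients of $\alpha$.

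Next I would assemble $\alpha$ from the standard nilpotent ``building blocks'' $(1-y)\,g\,\widehat{Y}$ with $y\in Y\le G$ and $g\in G$, which square to $0$ because $\widehat{Y}(1-y)=0$. As in the (BJ3) proof, one picks two such nilpotents $r,s$ with $rs=sr=0$ (arranged so that their central supports are orthogonal), and forms a half-sum
\[
\alpha \;=\; \tfrac{1}{2}\bigl(r\,u \;+\; s\,v\bigr),
\]
where $u,v$ are elements of $\zz[G]$ chosen so that $ru+sv$ becomes divisible by $2$ after a mod-$2$ calculation using identities like $(1+x^{2^k})\equiv_2(1+x)^{2^k}$. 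Candidate subgroups to pull $\widehat{Y}$ from are $\langle c\rangle$, $\langle b,c^2\rangle$, or $\langle ac^{-1}\rangle$; the relations $(ac)^2=(ac^{-1})^2=w$ and $[a,c]=w$ make $\langle a,c\rangle$ behave like a quaternion-type subgroup modulo $\langle w\rangle$, which should give the cancellation needed. Finally one checks that $e\alpha$ contains a monomial $g$ whose $w$-translate $gw$ does not appear in $\alpha$, producing a coefficient $\tfrac12$ in $e\alpha$, hence $e\alpha\notin\zz[G]$.

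\paragraph{Main obstacle.} The delicate step is the construction in the previous paragraph: one must simultaneously arrange that (i) the components of $\alpha$ pairwise annihilate so $\alpha^2=0$, (ii) the coefficients cancel mod $2$ so $\alpha\in\zz[G]$, and (iii) the cancellation is broken by multiplication by $e$. The centrality of $z,w$ and the special structure $G'=\mathcal{Z}(G)=\Phi(G)$ of this group should make the required identities short, but finding the correct combination typically requires trying each of the three central involutions $z$, $w$, $zw$ as the generator of $N$ and testing a few natural subgroups $Y$; this is the only genuinely non-mechanical part of the argument.
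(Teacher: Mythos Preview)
Your overall strategy (produce a nilpotent $\alpha\in\zz[G]$ and a central idempotent $e$ with $e\alpha\notin\zz[G]$) and your structural analysis of $G$ (with $z=a^2$, $w=b^2$, $G'=\mathcal{Z}(G)=\langle z,w\rangle$) are both correct. However, the route you sketch --- the (BJ3) template of building two commuting square-zero elements $r,s$ and forming a half-sum $\tfrac12(ru+sv)$ --- is considerably more elaborate than what is actually needed here, and your ``standard building blocks'' $(1-y)g\widehat{Y}$ are not the shape of the nilpotent that works.

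The paper itself does not reprove this lemma (it simply cites \cite[Lemma~2.16]{Liu}), but the argument is made explicit in the very next lemma, Lemma~\ref{lemDecNilpBJ9}, which quotes the (BJ8) nilpotent verbatim: one takes
\[
\alpha \;=\; (1-a^{2}b^{2})(1+a)(1+b)\,c \;\in\; \zz[G],
\]
already integral with no division by $2$ required. Nilpotency is immediate: conjugating by $c$ gives $c(1+a)(1+b)=(1+ab^{2})(1+ba^{2})c$, and since $(1-a^{2}b^{2})(1+a)(1+ab^{2})=a(1-a^{2})(1+b^{2})$ while $(1-a^{2}b^{2})(1+b)(1+ba^{2})=b(1-b^{2})(1+a^{2})$, the product $\alpha^{2}$ contains the factor $(1-a^{2})(1+a^{2})=0$. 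For the idempotent one takes $e=\widetilde{a^{2}}$ (your candidate $\widetilde{w}$ with the roles of $a,b$ swapped). Then
\[
\widehat{a^{2}}(\alpha c^{-1})=(1+a^{2})(1-a^{2}b^{2})(1+a)(1+b)=(1+a^{2})(1-b^{2})(1+a)(1+b)\equiv_{2}\widehat{a}\,\widehat{b},
\]
which is $\widehat{\langle a,b\rangle}\notin 2\zz[G]$, so $e\alpha\notin\zz[G]$.

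In short: the approach is right, but the ``main obstacle'' you anticipate evaporates once you try the single element $(1-zw)(1+a)(1+b)c$ rather than a half-sum; there is no mod-$2$ integrality check to perform on $\alpha$ itself, only on $e\alpha$.
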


\vspace{12pt}

{\it Groups of type \textup{(BJ9)}}\\

We use the idea in (BJ8) to prove the following result.

\begin{lem}
\label{lemDecNilpBJ9}
The group $G = \langle a, b, c, d \mid a^4 = b^4 = [a, b] = 1, \, c^2 = a^2 b^2 , \, a^c = a^{-1}, \, b^c = a^2 b^{-1}, \, d^2 = a^2, \, a^d = a^{-1} b^2, \, b^d = b^{-1}, \, [c, d] = 1 \rangle$ of type \textup{(BJ9)} does not have \textup{ND}.
\end{lem}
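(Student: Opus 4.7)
The strategy is to reduce to Lemma~\ref{lemDecNilpBJ8} by exhibiting a maximal subgroup $M\leq G$ isomorphic to the (BJ8) group together with a witness to ND-failure in $M$ whose central idempotent already lies in $\mathcal{Z}(G)$.

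First, take $M=\langle a,b,c\rangle$. The relations force $\langle a,b\rangle\simeq C_4\times C_4$ of order $16$, and since $c^2=a^2b^2\in\langle a,b\rangle$ while $c\notin\langle a,b\rangle$, one gets $|M|=32$, so $[G:M]=2$ and $M$ is maximal. By the structural description of type (BJ9), every maximal subgroup of $G$ is isomorphic to the (BJ8) group, hence $M\simeq$ (BJ8). Lemma~\ref{lemDecNilpBJ8} therefore supplies a nilpotent element $\alpha\in\zz[M]$ and a central idempotent $e\in\qq[M]$ with $e\alpha\notin\zz[M]$.

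Second, inspect the construction in Lemma~\ref{lemDecNilpBJ8}: as in Lemma~\ref{lemDecNilpBJ3}, the idempotent $e$ is a half-sum $\widetilde{z}$ for some central involution $z\in\mathcal{Z}(M)=\langle a^2,b^2\rangle$. A direct computation from the defining relations of $G$ yields $(a^2)^c=a^{-2}=a^2$, $(a^2)^d=(a^{-1}b^2)^2=a^{-2}b^4=a^2$, $(b^2)^c=(a^2b^{-1})^2=a^4b^{-2}=b^2$ and $(b^2)^d=b^{-2}=b^2$, so $\langle a^2,b^2\rangle\subseteq\mathcal{Z}(G)$. Consequently $z\in\mathcal{Z}(G)$ and $e=\widetilde{z}$ is a central idempotent of $\qq[G]$ as well. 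Regarding $\alpha$ as an element of $\zz[G]$ (where it remains nilpotent), one has $e\alpha\in\qq[M]$; since $G$ is the disjoint union of $M$-cosets, $\zz[G]\cap\qq[M]=\zz[M]$, so $e\alpha\notin\zz[M]$ forces $e\alpha\notin\zz[G]$, proving that $G$ lacks ND.

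The principal obstacle is the second step: it requires that the witness $e$ produced in the proof of Lemma~\ref{lemDecNilpBJ8} be expressible as $\widetilde{z}$ for $z$ central in $G$, not merely central in $M$. Because $\mathcal{Z}(M)=\langle a^2,b^2\rangle$ sits inside $\mathcal{Z}(G)$, any $\widetilde{z}$-type idempotent supplied by the (BJ8) argument descends for free. Should the (BJ8) witness use a more delicate idempotent (e.g.\ a sum of conjugates of $\widetilde{z}$), the fallback is to run a direct analogue of the mod-$2$ bookkeeping of Lemma~\ref{lemDecNilpBJ3}: build a polynomial $r$ in $a,b,c$ (and, if needed, $d$) annihilated by a suitable $(1-z)$-factor, show $r^2=0$ by exploiting $a^2,b^2$ being central involutions, form $\alpha=\tfrac12\bigl(r(1-t)+s(1-t)^3\bigr)$ for a second nilpotent $s$ with $rs=sr=0$, verify $\alpha\in\zz[G]$ by expanding modulo $2$ and using $(1+t^{2^k})\equiv_2(1+t)^{2^k}$, and then detect a surviving half-integer coefficient after multiplying by $\widetilde{a^2}$ or $\widetilde{a^2b^2}$.
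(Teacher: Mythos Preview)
Your approach is essentially the same as the paper's: locate a (BJ8)-type maximal subgroup, import its ND-failure witness $(\alpha,e)$, and observe that the relevant central involution already lies in $\mathcal{Z}(G)$, so $e$ remains central in $\qq[G]$ and $e\alpha\notin\zz[G]$ follows from $\zz[G]\cap\qq[M]=\zz[M]$.

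The only substantive difference is in execution. You take $M=\langle a,b,c\rangle$ and invoke the abstract structural fact that every maximal subgroup of a (BJ9) group is of type (BJ8); you then rely on knowing that Liu's (BJ8) witness uses $e=\widetilde{z}$ with $z\in\mathcal{Z}(M)=\langle a^2,b^2\rangle$. The paper instead chooses the subgroup $H=\langle b,a,cd\rangle$ because the triple $(A,B,C)=(b,a,cd)$ satisfies the (BJ8) relations \emph{on the nose} (one checks $C^2=A^2$, $A^C=AB^2$, $B^C=BA^2$), so there is an explicit epimorphism $G_0\to H$ from the (BJ8) group and no appeal to an abstract isomorphism is needed. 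The paper then writes down $\alpha=(1-A^2B^2)(1+A)(1+B)C$ and $e=\widetilde{A^2}=\widetilde{b^2}$ and verifies $e\alpha\notin\zz[G]$ by a short mod-$2$ computation. Your version is conceptually cleaner but leans on the (BJ9) classification and on the form of Liu's idempotent; the paper's version is self-contained and constructive. Your hedging paragraph (the ``fallback'' mod-$2$ bookkeeping) is unnecessary once one knows the (BJ8) idempotent is indeed $\widetilde{a_0^2}$, which it is.
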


\begin{proof}
Let $A = b$, $B = a$ and $C = c d$. Then $A^4 = B^4 = [A, B] = 1$ and $C^2 = c^2 d^2 = (a^2 b^2) a^2 = A^2$. Moreover, $A^C = b^{c d} = (b^{-1})^c = (b^3)^c = (b^c)^3 = a^2 b^{-3} = A B^2$ and $B^C = a^{c d} = (a^{-1} b^2)^c = a (a^2 b^{-1})^2 = A^2 B$. Let $H = \langle A, B, C \rangle$ and $G_0$ be the group of type (BJ8) with generators $a_0, b_0, c_0$. Then there is a group epimorphism $G_0 \to H$ given by $a_0 \mapsto A$, $b_0 \mapsto B$ and $c_0 \mapsto C$. As in the proof of \cite[Lemma~2.16]{Liu}, $(1-a_0^2 b_0^2) (1+a_0) (1+b_0) c_0$ is a nilpotent element of $\zz[G_0]$. It follows that $\alpha = (1 - A^2 B^2) (1 + A) (1 + B) C$ is a nilpotent element of $\zz[H] \subseteq \zz[G]$. Note that $A^2 = b^2 = a^2 c^2 = d^2 c^2$ which commutes with $c, d$ and, of course, $a$. Thus, $e = \widetilde{A^2}$ is a central idempotent in $\qq[G]$. We claim that $e\alpha \not\in \zz[G]$. It is equivalent to $e (\alpha C^{-1}) \not\in \zz[G]$. Now, $\widehat{A^2} (\alpha C^{-1})= \widehat{A^2} (1 - B^2) (1 + A) (1 + B) \equiv_2 \widehat{A} \widehat{B} = \widehat{b} \widehat{a}$. Since $\widehat{b} \widehat{a} = \widehat{\langle a, b \rangle} \not\in 2 \zz[G]$, it follows that $e (\alpha C^{-1}) \not\in \zz[G]$, as claimed.
\end{proof}


\vspace{12pt}
\subsection{Finite SSN groups that are nilpotent but not a $p$-group}$\;$
\label{secII42}

Now, we focus on nilpotent groups which are not $p$-groups. By Proposition~\ref{propSSN1}, we only have to focus on Hamiltonian groups.

\begin{lem}
\label{lemDecNilpHam}
Let $G$ be a Hamiltonian group such that $\qq[G]$ has nonzero nilpotent elements. Then $G$ has \textup{ND} if and only if $G = Q_8 \times C_p$ where $p$ is an odd prime such that $\ord_p(2)$ is even. In particular, $\qq[G]$ has only one matrix component which is isomorphic to $M_2(\qq(\xi_p))$.
\end{lem}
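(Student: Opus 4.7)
The converse direction is immediate from Lemma~\ref{lemOneMatrix}(iii): with $\ord_p(2)$ even, $\qq[Q_8 \times C_p]$ has exactly one matrix component, so $G$ has \ND{} automatically. The forward direction is where the work lies.

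For ($\Rightarrow$), my plan is to write $G = Q_8 \times E \times A$ in the standard Hamiltonian form with $E$ elementary abelian $2$-group and $A$ abelian of odd order, and to eliminate configurations of $E$ and $A$ by repeated applications of Lemma~\ref{lemDecNilp}. Since $\qq[E]$ is a direct sum of copies of $\qq$, one has $\qq[G] \simeq \qq[Q_8 \times A]^{|E|}$, so $\qq[Q_8 \times A]$ also has nonzero nilpotents. Three reductions then do the job. First, if $E \neq 1$, writing $G = (Q_8 \times A) \times E$ and applying Lemma~\ref{lemDecNilp} with $H = Q_8 \times A$ and $N = E$ gives a contradiction, so $E = 1$. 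Second, recalling the classical fact that for $B$ abelian of odd order $\qq[Q_8 \times B]$ contains nonzero nilpotents iff $\ord_{\exp(B)}(2)$ is even, combined with $\ord_{\exp(A_1 \times A_2)}(2) = \lcm(\ord_{\exp(A_1)}(2), \ord_{\exp(A_2)}(2))$ for $A_1, A_2$ of coprime orders, if $A$ is not a $p$-group then some $\qq[Q_8 \times A_i]$ has nilpotents, and Lemma~\ref{lemDecNilp} with $N = A_j$ rules this out. Third, if $A = C_{p^{a_1}} \times \cdots \times C_{p^{a_k}}$ is non-cyclic ($k \geq 2$), the smallest summand $A_2 = C_{p^{a_k}}$ can be split off and the complement $A_1$ inherits the exponent $p^{a_1} = \exp(A)$, so $\qq[Q_8 \times A_1]$ has nilpotents and Lemma~\ref{lemDecNilp} closes this case. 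This leaves $G = Q_8 \times C_{p^a}$ with $\ord_p(2)$ even.

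The main obstacle is ruling out $a \geq 2$. Lemma~\ref{lemDecNilp} cannot finish the job here: since $\gcd(|Q_8|, |C_{p^a}|) = 1$, every subgroup of $G$ is a direct product of subgroups of the two factors, and since the subgroups of $C_{p^a}$ form a chain, any $H \leq G$ with $H \cap N = 1$ for some nontrivial normal $N \trianglelefteq G$ must lie entirely in $Q_8$ or entirely in $C_{p^a}$; in either case $\qq[H]$ has no nonzero nilpotent elements, so no contradiction arises. Instead one must construct by hand, in the spirit of Lemma~\ref{lemDecNilpBJ3} (the $p=2$ analogue for $Q_8 \times C_{2^n}$, $n \geq 3$), a specific nilpotent $\alpha \in \zz[G]$ and a primitive central idempotent $e \in \qq[G]$ such that $\alpha e \notin \zz[G]$. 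With $z = a^2 = b^2$ in $Q_8$ and $t = g^{p^{a-2}}$ of order $p^2$ (which is precisely where $a \geq 2$ enters), the plan is to exploit the identity $(1-z)(a+a^{-1}) = 0$ in $\qq[Q_8]$ (valid because $a^{-1} = az$) to assemble a nilpotent $\alpha$ from factors $(a + bt)$, $(1-z)$, and suitable polynomials in $t$, and then test it against the primitive central idempotent $\tfrac{1-z}{2}\bigl(1 - \widetilde{\langle t^p\rangle}\bigr)$ which corresponds to the matrix component $M_2(\qq(\xi_{p^2}))$; the denominator $p$ appearing in $\widetilde{\langle t^p\rangle}$ is not absorbed, delivering $\alpha e \notin \zz[G]$. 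This explicit verification is the principal technical step. Once $a = 1$ is forced, the description of the unique matrix component as $M_2(\qq(\xi_p))$ follows from the decomposition $\qq[Q_8 \times C_p] \simeq \qq[Q_8] \otimes (\qq \oplus \qq(\xi_p))$ together with the fact that $\hh_{\qq(\xi_p)}$ is split precisely when $\ord_p(2)$ is even.
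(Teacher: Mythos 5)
Your reductions are fine and coincide with the paper's: the converse via Lemma~\ref{lemOneMatrix}, and the forward-direction eliminations of $E$, of a second prime, and of non-cyclic $A$ all by Lemma~\ref{lemDecNilp}, exactly as in the paper (which also needs the small argument that $\ord_{p^n}(2)$ even forces $\ord_p(2)$ even; your exponent bookkeeping covers the same point). The problem is the step you yourself call ``the principal technical step'': ruling out $G = Q_8 \times C_{p^a}$ with $a \geq 2$. You do not prove it --- you only announce a plan --- and, as described, the plan would not work. Mimicking Lemma~\ref{lemDecNilpBJ3} with factors $(a+bt)$, $(1-z)$ and polynomials in $t$ breaks down for odd $p$: writing $Q_8=\langle a,b\rangle$, $z=a^2$, $t$ central of odd order, one has $(a+bt)^2 = z(1+t^2)+abt(1+z)$, hence $(1-z)(a+bt)^2=(z-1)(1+t^2)$; since $t$ has odd order, no character sends $t^2$ to $-1$, so $1+t^2$ is a unit of $\qq[\langle t\rangle]$ and no polynomial factor $\gamma(t)$ can make $(a+bt)(1-z)\gamma(t)$ square-zero. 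The $p=2$ construction succeeds precisely because $(1+t^2)$ is annihilated by $(1-t^2)(1+t^4)$ when $t^8=1$, and its integrality argument rests on mod-$2$ congruences such as $(1+t^{2^k})\equiv_2(1+t)^{2^k}$, none of which have analogues in your sketch. So the forward direction is unproven exactly where the content lies.

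The paper's actual argument for this step is of a different nature (it imitates \cite[Lemma~14]{HalPasWil}): with $c=a^{p^{n-2}}$ of order $p^2$ and $Q_8=\langle x,y\rangle$, the hypothesis that $\ord_p(2)$ is even supplies polynomials $r(X),s(X)\in\zz[X]$ with $1+r(c^p)^2+s(c^p)^2\in\zz\,\widehat{c^p}$; one then sets $\alpha=x+r(c^p)y+s(c^p)xy$, $\beta=x+r(c)y+s(c)xy$ and
$w=\frac{1}{p}(1-x^2)\bigl[(1-c)^{p^2-p-1}(1-c^p)\alpha-(1-c)^{p-1}\widehat{c^p}\beta\bigr]$,
and verifies three things: $w^2=0$ (using $\bigl((1-c^p)\bigr)^2+\bigl((1-c^p)r(c^p)\bigr)^2+\bigl((1-c^p)s(c^p)\bigr)^2=0$), $w\in\zz[G]$ (via mod-$p$ congruences), and $w\,\widetilde{c^p}\notin\zz[G]$. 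The mechanism is the vanishing of a quaternionic norm form $1+r^2+s^2$ in the relevant cyclotomic quotient, not an identity of the shape $(a+bt)^2(1-z)(\cdots)=0$. Until you produce such an element and carry out the three verifications (nilpotency, integrality, and non-integrality of the chosen component), the case $a\geq 2$ --- and with it the ``only if'' direction --- remains open in your write-up.
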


\begin{proof}
Write $G = Q_8 \times E \times A$, where $E$ is an elementary abelian $2$-group and $A$ is an abelian group of odd order. Assume that $G$ has ND. By assumption, $\qq[G]$ has nonzero nilpotent elements, so $\ord_{|A|}(2)$ is even. Note that $\qq[Q_8 \times A]$ also has nonzero nilpotent elements. By Lemma~\ref{lemDecNilp}, it follows that $E = 1$. Write $|A| = p_1^{n_1} \cdots p_k^{n_k}$ for distinct odd primes $p_1, \ldots, p_k$. Let $o_i = \ord_{p_i^{n_i}}(2)$. Then it is clear that $\ord_{|A|}(2) \mid o_1 o_2 \cdots o_k$. Thus, at least one of $o_i$ is even, say $o_1$. Let $S$ be the Sylow $p_1$-subgroup of $A$. Then $A = S \times T$ for some $p_1'$-group $T$. Since the algebra $\qq[Q_8 \times S]$ has nonzero nilpotent elements we get that   $T = 1$ (again by Lemma~\ref{lemDecNilp}). Thus, we can write $|A| = p^n$ for some odd prime $p$ such that $\ord_{p^n}(2)$ is even. Let $r = \ord_p(2)$. Then $2^r = 1 + p m$ for some $m \in \nn$. Observe that $2^{r p} = (1 + p m)^p \equiv 1 \pmod{p^2}$ and $2^{r p^{j}} \equiv 1 \pmod{p^{j+1}}$ for each $j \in \nn$ by induction. Thus, $\ord_{p^n}(2) \mid r p^{n-1}$. We can conclude that $r = \ord_p(2)$ is even. 

Let $C$ be a subgroup of $A$ of order $p$. Since $\qq[Q_8 \times C]$ has nonzero nilpotent elements, we can obtain that $A$ must be cyclic from Lemma~\ref{lemDecNilp}. We claim that $A = C \simeq C_p$ which is equivalent to $n=1$. Suppose $n \geq 2$ and let $A = \langle a \rangle$. The following argument imitates some steps in the proof of  \cite[Lemma~14]{HalPasWil}. Write $c = a^{p^{n-2}}$ and $Q_8 = \langle x, y \mid x^4 = 1, y^2 = x^2, y x y^{-1} = x^{-1} \rangle$. Since $\ord_p(2)$ is even and the order of $c^p$ is $p$, there are polynomials $r(X), s(X)$ in $\zz[X]$ such that $1 + r(c^p)^2 + s(c^p)^2 \in \zz \, \widehat{c^p}$ (see \cite[p. 153]{GiaSeh}). We have $$((1-c^p))^2 + ((1-c^p) r(c^p))^2 + ((1-c^p) s(c^p))^2 = 0.$$ Now, let $$w = \frac{1}{p} (1-x^2) [ (1-c)^{p^2- p - 1} (1-c^p) \alpha - (1-c)^{p-1} \widehat{c^p} \beta] ,$$ where $\alpha = x + r(c^p) y + s(c^p) x y$ and $\beta = x + r(c) y + s(c) x y$. Note that $(1 - c^p) \widehat{c^p} = 0$ and $((1-x^2) (1-c^p) \alpha)^2 = -2(1 - x^2) (1-c^p)^2 (1 + r(c^p)^2 + s(c^p)^2) = 0$. Moreover, $(1 - x^2) (1-c) \widehat{c^p} \beta$ can be regarded as a nilpotent element in $\qq[(Q_8 \times \langle c \rangle) / \langle c^p \rangle] \simeq \qq[Q_8 \times C]$. Thus, $((1 - x^2) (1-c) \widehat{c^p} \beta)^2 = 0$. So $w^2 = 0$. 

We illustrate that $w \in \zz[G]$. Note that $(1-c)^{p-1} \widehat{c^p} \equiv_p (1-c)^{p-1} (1+c+ \cdots + c^{p-1})^p \equiv_p (1-c^p)^{p-1} (1+c+ \cdots + c^{p-1}) \equiv_p (1-c)^{p^2-p} (1+c+ \cdots + c^{p-1}) \equiv_p (1-c)^{p^2-p-1} (1-c^p)$. Thus, $$p w \equiv_p (1-x^2) (1-c)^{p^2-p-1} (1-c^p) (\alpha - \beta) \equiv_p (1-x^2) (1-c)^{p^2-1} (\alpha - \beta).$$ Since $\alpha - \beta \in \zz[\langle c \rangle]$ and $\aug(\alpha - \beta) = 0$, it follows that $\alpha - \beta = (1-c) \gamma$ for some $\gamma \in \zz[\langle c \rangle]$. Hence, $p w \equiv_p (1-x^2) (1-c)^{p^2} \gamma \equiv_p (1-x) (1-c^{p^2}) \gamma \equiv_p 0$. In other words, $p w \in p \zz[G]$. We have $w \in \zz[G]$.

Let $e = \widetilde{c^p}$. Then $e$ is a central idempotent in $\qq[G]$. Moreover, $w e = - (1 - x^2) (1 - c)^{p-1} \beta \widetilde{c^p}$. Suppose that $w e \in \zz[G]$. Then $(1-c)^{p-1} \widetilde{c^p} \in \zz[\langle c \rangle]$. Thus, $0 \equiv_p (1-c)^{p-1} \widehat{c^p} \equiv_p (1-c)^{p^2-p-1} (1-c^p) \equiv_p (1-c)^{p^2-1}$. However, this leads to a contradiction because $(1-c)^{p^2-1} \not\equiv_p 0$. Hence, we must have $w e \not\in \zz[G]$, a contradiction. Thus, $n=1$. 

On the other hand, the rational group algebra $\qq[Q_8 \times C_p]$ is isomorphic to $4 \qq \oplus \hh_{\qq} \oplus 4 \qq(\xi_p) \oplus M_2(\qq(\xi_p)$ which has only one matrix component. This completes the proof. 
\end{proof}

\subsection{First main result: the nilpotent case}$\;$

Combining all results from (BJ1)-(BJ9) and Lemma~\ref{lemDecNilpHam}, we have a conclusion for nilpotent finite groups.

\begin{thmA}
\label{thmDecNilp2}
Let $G$ be a nilpotent \textup{SSN} group. Then $\qq[G]$ has only one matrix component if and only if $G$ is a group of the following types.
\begin{enumerate}
\item[(a)]
$G$ is a $p$-group and one of the following holds\textup{:}
\begin{enumerate}
\item[(i)]
$G = \langle a, b \mid a^{p^m} = b^{p^n} = 1, \, a^b = a^{1+p^{m-1}} \rangle = C_{p^m} \rtimes C_{p^n}$ where $p \geq 2$ is a prime, $m \geq 2$ and $n=1$, or $p = m = n = 2$ \textup{(}moreover, the matrix component is $M_p(\qq(\xi_{p^{m-1}}))$\textup{);}
\item[(ii)]
$G = G_0 \Ydown Z$, the central product of a nonabelian group $G_0$ of order $p^3$ with a cyclic group $Z$, where $G_0 \cap Z = \mathcal{Z}(G_0)$ and if $p=2$, then $|Z| > 2$
\textup{(}moreover,  the matrix component is $M_p(\qq(\xi_{|Z|}))$ for any $p$\textup{);}
\item[(iii)]
$G = Q_8 \times C_4$ \textup{(}moreover, the matrix component is $M_2(\qq(i))$\textup{);}
\item[(iv)]
$G = Q_{16}$ \textup{(}moreover, the matrix component is $M_2(\qq)$\textup{);}
\item[(v)]
$G = D_8 \Ydown Q_8$, an extraspecial $2$-group of order $2^5$ \textup{(}moreover,  the matrix component is $M_2(\hh (\qq))$\textup{)}.
\end{enumerate}
\item[(b)]
$G$ is not a $p$-group and $G = Q_8 \times C_p$ for some odd prime $p$ such that $\ord_p(2)$ is even \textup{(}moreover, the matrix component is $M_2(\qq(\xi_p))$\textup{)}.
\end{enumerate}
\end{thmA}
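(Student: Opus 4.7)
The approach is to combine the preceding case-by-case work. First I would separate the two possibilities via Proposition~\ref{propSSN1}: nilpotent \SSN{} groups that are not $p$-groups are abelian or Hamiltonian, while nilpotent \SSN{} $p$-groups are precisely the \textup{NCN} $p$-groups, to which the Bo\v{z}ikov--Janko classification (Theorem~\ref{thmNCN2}) applies. A useful background observation throughout is that, by the contrapositive of Lemma~\ref{lemOneMatrix}, any group failing \ND{} must carry at least two matrix components, so the many lemmas of Subsection~\ref{secII41} that rule out \ND{} automatically rule out ``exactly one matrix component'' as well.

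For the non-$p$-group case, abelian groups produce no matrix component, so the only contributions come from Hamiltonian $G$ whose rational group algebra has nonzero nilpotent elements; Lemma~\ref{lemDecNilpHam} identifies these as $Q_8 \times C_p$ with $p$ an odd prime and $\ord_p(2)$ even, and records the unique matrix component as $M_2(\qq(\xi_p))$. This yields part (b). For the $p$-group case, Theorem~\ref{thmNCN2} reduces the problem to nine families (BJ1)--(BJ9) together with the abelian and Hamiltonian $p$-groups; the latter two are discarded because abelian $p$-groups make $\qq[G]$ commutative, and Hamiltonian $2$-groups have the form $Q_8 \times E$ with $\qq[G]$ a product of copies of $\qq$ and $\hh_{\qq}$, hence again no matrix component.

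For each remaining family I would invoke the corresponding result from Subsection~\ref{secII41}. Proposition~\ref{propDecNilBJ1} for (BJ1) pinpoints the exact parameters $(p,m,n)$ yielding a single matrix component and computes it, producing (a)(i); Lemma~\ref{lemDecNilpBJ2} shows every (BJ2) group works with matrix component $M_p(\qq(\xi_{|Z|}))$, producing (a)(ii); for (BJ3) the explicit Wedderburn decomposition of $\qq[Q_8 \times C_4]$ recalled in the text gives the single matrix component $M_2(\qq(i))$, while Lemma~\ref{lemDecNilpBJ3} excludes $Q_8 \times C_{2^n}$ for $n \geq 3$, producing (a)(iii); (BJ6) and (BJ7) are settled by the explicit decompositions of $\qq[Q_{16}]$ and $\qq[D_8 \Ydown Q_8]$ quoted in the text, giving (a)(iv) and (a)(v); and (BJ4), (BJ5), (BJ8), (BJ9) are eliminated because Lemmas~\ref{lemDecNilpBj4}, \ref{lemDecNilpBJ5}, \ref{lemDecNilpBJ8}, \ref{lemDecNilpBJ9} establish that they fail \ND{}.

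The main obstacle I expect is not any single sub-case in isolation but organising (BJ1) with $n \geq 2$: one naturally produces $p$ nonconjugate strong Shoda pairs $(A, K_i)$ that a priori give rise to several non-commutative Wedderburn components, and one must combine Roquette's bound on the Schur indices of rational $p$-group algebras with a careful analysis of central elements such as $a^{2^k} e(G,A,K_1)$ and $b^{2^l} e(G,A,K_1)$ in order to force $p = m = n = 2$. Once that is dispatched, the identifications of the matrix components in (a)(i), (a)(ii) and (b) are read off directly from Proposition~\ref{propSShodaPair} applied to the natural strong Shoda pair $(A,1)$ or $(A,K)$ appearing in each family, which completes the theorem.
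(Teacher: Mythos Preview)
Your proposal is correct and follows essentially the same approach as the paper: the paper's proof of Theorem~\ref{thmDecNilp2} is exactly the one-line statement ``Combining all results from (BJ1)--(BJ9) and Lemma~\ref{lemDecNilpHam}, we have a conclusion for nilpotent finite groups,'' which is the case split you describe. The obstacle you flag for (BJ1) with $n \geq 2$ is already the content of Lemma~\ref{lemDecNilpBJ1-2} (packaged into Proposition~\ref{propDecNilBJ1}), so at the level of the theorem itself there is nothing further to organise.
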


We know that rational group algebras of $p$-groups of type (BJ2), (BJ6) and (BJ7) have only one matrix component and $p$-groups of type (BJ3), (BJ4), (BJ5), (BJ8) and (BJ9) do not have ND. Combining these results and Lemma~\ref{lemDecNilpHam}, we can also conclude the following result.

\begin{cor}
\label{corDecNilpNilp}
Let $G$ be a nilpotent \textup{SSN} group that is not a group of type \textup{(BJ1)}. Assume that $\qq[G]$ has some nonzero nilpotent elements. If $G$ has \textup{ND}, then $\qq[G]$ has only one matrix component.
\end{cor}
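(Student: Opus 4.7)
The plan is to synthesize the structural classification with the individual lemmas of Section~\ref{secII41} in a case analysis. First I would invoke Proposition~\ref{propSSN1}: since $G$ is a nilpotent \textup{SSN} group, either $G$ is a $p$-group with the \textup{NCN} property, or $G$ is not a $p$-group and is therefore abelian or Hamiltonian. The hypothesis that $\qq[G]$ contains a nonzero nilpotent immediately excludes the abelian case throughout.

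In the non-$p$-group case, $G$ must then be Hamiltonian, and Lemma~\ref{lemDecNilpHam} applies directly: the \textup{ND} hypothesis forces $G = Q_8 \times C_p$ for an odd prime $p$ with $\ord_p(2)$ even, and the same lemma records that $\qq[G]$ has exactly one matrix component, namely $M_2(\qq(\xi_p))$.

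In the $p$-group case, Theorem~\ref{thmNCN2} lists the possibilities as abelian, Hamiltonian, or one of the types (BJ1)--(BJ9). The abelian option is excluded. A Hamiltonian $p$-group must have $p=2$ and be of the form $Q_8 \times E$ with $E$ elementary abelian of exponent at most $2$; since $\qq[E]$ is then a direct product of copies of $\qq$, the algebra $\qq[G]$ is a sum of copies of $\qq$ and $\hh_{\qq}$ and contains no nilpotent elements, so this case is excluded as well. Type (BJ1) is excluded by hypothesis. For types (BJ2), (BJ6) and (BJ7) the excerpt already exhibits an explicit Wedderburn decomposition (via Lemma~\ref{lemDecNilpBJ2} and the displayed isomorphisms $\qq[Q_{16}] \simeq 4\qq \oplus H \oplus M_2(\qq)$ and $\qq[Q_8 \Ydown D_8] \simeq 16\qq \oplus M_2(\hh_{\qq})$) with exactly one matrix component, so the conclusion holds. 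For (BJ3) the group is $Q_8 \times C_{2^n}$ with $n \geq 2$: when $n=2$ the decomposition $\qq[Q_8 \times C_4] \simeq 2(4\qq \oplus \hh_{\qq}) \oplus 4\qq(\sqrt{-1}) \oplus M_2(\qq(\sqrt{-1}))$ shows a unique matrix component, while for $n \geq 3$ Lemma~\ref{lemDecNilpBJ3} shows that \textup{ND} fails, contradicting the hypothesis. Types (BJ4), (BJ5), (BJ8) and (BJ9) are likewise eliminated since Lemmas~\ref{lemDecNilpBj4}, \ref{lemDecNilpBJ5}, \ref{lemDecNilpBJ8} and \ref{lemDecNilpBJ9} show that these groups fail \textup{ND}.

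The argument itself is essentially bookkeeping once the lemmas of Section~\ref{secII41} and Lemma~\ref{lemDecNilpHam} are in hand; the genuine obstacle (already overcome in the preceding subsections) was the explicit construction of nilpotent elements and central idempotents witnessing the failure of \textup{ND} in the excluded BJ types, together with the delicate inductive analysis for Hamiltonian groups in Lemma~\ref{lemDecNilpHam}. With those results available, the corollary follows by simple exhaustion over the classification.
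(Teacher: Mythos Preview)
Your argument is correct and follows essentially the same route as the paper: invoke Proposition~\ref{propSSN1} and Theorem~\ref{thmNCN2} to reduce to the Hamiltonian case (handled by Lemma~\ref{lemDecNilpHam}) and the types (BJ2)--(BJ9), then cite the one-matrix-component computations for (BJ2), (BJ6), (BJ7) and the failure of \textup{ND} for (BJ4), (BJ5), (BJ8), (BJ9). Your treatment of (BJ3) is in fact slightly more careful than the paper's terse summary, correctly separating the $n=2$ case (one matrix component) from the $n\geq 3$ case (Lemma~\ref{lemDecNilpBJ3}).
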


We do not know whether groups of type (BJ1) have the above property.


\section{Nilpotent Decomposition for Non-nilpotent \textup{SSN} Groups}
\label{secII3}

We separate the study of the ND property for non-nilpotent SSN groups into three subsections. The first two subsections are based on the classifications of Theorem~\ref{thmSSN}(i) and (ii), respectively, for solvable groups that are not nilpotent. In the third subsection  we deal with the non-solvable groups and we show that there do not exist such groups that have both properties SSN and ND.
As in the previous section, we will use Theorem~\ref{thmMetabelian} to describe non-commutative  Wedderburn components  determined by strong Shoda pairs $(H,K)$,
and  with  $K \not\supseteq G'$  because of Lemma~\ref{lemSSPcomm}.


\subsection{Non-nilpotent \textup{SSN} groups that are  solvable  and have  \textup{ND}: faithful action}$\;$
\label{sec12-2}

As stated in Theorem~\ref{thmSSN}, there are two types of  non-nilpotent \textup{SSN} finite groups that are solvable. First we deal those listed in item (i) and we will prove that if they have property ND then  their rational group algebra has only one matrix  Wedderburn component. So, throughout this subsection, we assume that $G = P \rtimes Q$, where $P$ is an elementary abelian $p$-group for a prime $p$, $Q$ is a cyclic $p'$-group which acts faithfully on $P$, and every nontrivial subgroup of $Q$ acts irreducibly on $P$. 

Note that $P$ can be regarded as a vector space over $\ff_p$, the finite field of $p$ elements. Since $p \nmid |Q|$, every subgroup $H$ of $Q$ acts irreducibly on $P$ as an irreducible $\ff_p[H]$-module. It follows that the normalizer of every nontrivial proper subgroup of $P$ in $G$ is $P$. Thus one has $G' = P$ because $G/P \simeq Q$ is abelian and $G'$ is normal. Moreover, the faithful action implies that $P$ is the only maximal abelian subgroup containing $G'$. As a consequence, we have the following observation.

\begin{lem}
\label{lemSSN1}
Let $G = P \rtimes Q$ be as above. The following properties hold.
\begin{enumerate}
\item[(i)]
$N_G(P_1) = P$ for each nontrivial proper subgroup $P_1$ of $P$. 
\item[(ii)]
$G' = P$.
\item[(iii)]
$P$ is the maximal abelian subgroup containing $G'$.
\item[(iv)]
If $P$ is not cyclic, then $|Q|$ is odd.
\end{enumerate}
\end{lem}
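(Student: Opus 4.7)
The plan is to treat the four parts in order, exploiting the dual structure of $P$ as both a normal subgroup of $G$ and as an $\ff_p[Q]$-module via conjugation.

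For (i), since $P$ is abelian, it centralizes (hence normalizes) any subgroup $P_1 \leq P$, giving $P \leq N_G(P_1)$. For the reverse, I would use the semidirect product decomposition: writing any $g \in G$ as $g = pq$ with $p \in P$, $q \in Q$, the relation $g P_1 g^{-1} = P_1$ reduces to $q P_1 q^{-1} = P_1$ because $P$ is abelian. Therefore it suffices to show $N_G(P_1) \cap Q = 1$. If a nontrivial $q \in Q$ lay in $N_G(P_1)$, then the nontrivial subgroup $\langle q\rangle\leq Q$ would stabilize $P_1$, making $P_1$ a proper nontrivial $\ff_p[\langle q \rangle]$-submodule of $P$ and contradicting the hypothesis that every nontrivial subgroup of $Q$ acts irreducibly on $P$.

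For (ii), since $G/P \simeq Q$ is abelian one has $G' \leq P$. Because $Q$ is nontrivial (otherwise $G$ would be the nilpotent abelian group $P$) and acts faithfully on $P$, the action is nontrivial, hence $G$ is nonabelian and $G' \neq 1$. As $G' \trianglelefteq G$ and $G' \leq P$, the subgroup $G'$ is $Q$-stable, i.e.\ a nonzero $\ff_p[Q]$-submodule of $P$, and the irreducibility of the $Q$-action forces $G' = P$. For (iii), given any abelian subgroup $A$ with $P \leq A$, take an arbitrary element $a = pq \in A$ with $p \in P$, $q \in Q$; commutativity of $A$ with every element of $P$ yields $q p' q^{-1} = p'$ for all $p' \in P$, so $q$ acts trivially on $P$ and the faithful action forces $q = 1$. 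Thus $a \in P$ and $A = P$.

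For (iv), I would split on $p$. If $p = 2$, then $Q$ being a $p'$-group is automatically of odd order. If $p$ is odd and $\dim_{\ff_p} P \geq 2$, suppose for contradiction that $Q$ contains an involution $\sigma$; then on the $\ff_p$-vector space $P$ the operator $\sigma$ satisfies $\sigma^2 = \id$, and since $\char \ff_p \neq 2$ the polynomial $X^2-1$ is separable, yielding the eigenspace decomposition $P = P^+ \oplus P^-$ with $P^\pm = \ker(\sigma \mp \id)$. Both summands are $\langle \sigma\rangle$-stable, and irreducibility of $\langle \sigma\rangle$ on $P$ then forces one of them to equal $P$, i.e.\ $\sigma$ acts trivially on $P$, contradicting the faithfulness of the $Q$-action (and in particular $\sigma \neq 1$). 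Hence $Q$ has no involution and $|Q|$ is odd.

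Overall, no single step looks like a serious obstacle: (i) and (iii) are short semidirect-product arguments, (ii) is a module-theoretic observation, and the only mildly delicate point is isolating the characteristic issue in (iv), where one must keep $p=2$ apart and invoke separability of $X^2-1$ only when $p$ is odd.
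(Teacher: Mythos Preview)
Your arguments for (i)--(iii) are correct and are essentially what the paper records (the paper only sketches these in the paragraph preceding the lemma and devotes the formal proof to (iv)).

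In (iv), however, there is a slip. From the eigenspace decomposition $P = P^+ \oplus P^-$ and the irreducibility of $\langle\sigma\rangle$ you correctly conclude that one summand equals $P$, but your ``i.e.\ $\sigma$ acts trivially on $P$'' only covers the case $P = P^+$. The case $P = P^-$ is just as possible at this stage: then $\sigma$ acts as $-\id$, which is certainly faithful. You need one extra line: if $\sigma$ acts as the scalar $-1$, then \emph{every} $\ff_p$-subspace of $P$ is $\langle\sigma\rangle$-stable, and since $\dim_{\ff_p} P \ge 2$ there exist proper nontrivial ones, contradicting the irreducibility hypothesis. With that added, your eigenspace argument goes through.

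For comparison, the paper's proof of (iv) avoids linear algebra altogether and instead recycles (i): for $1\neq g\in P$ and an involution $x\in Q$, the element $g g^{x}$ is fixed by $x$, so $x\in N_G(\langle g g^{x}\rangle)$; part (i) then forces $g g^{x}=1$ (using that $P$ is not cyclic), whence $g^{x}=g^{-1}$ and $x\in N_G(\langle g\rangle)$, contradicting (i) again. Your approach is more module-theoretic and makes the role of the characteristic explicit (the $p=2$ case is genuinely different in your proof, whereas the paper's argument is uniform in $p$); the paper's approach is shorter and stays inside the group.
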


\begin{proof}
The first three properties have been mentioned as above. We now prove (iv). Assume that $P$ is not cyclic. Suppose that $|Q|$ is even. Let $1 \neq g \in P$ and let $x \in Q$ be of order $2$. Then the element $g g^{x}$ is fixed by $x$. In particular, $x \in N_G(\langle g g^x \rangle)$. Then either $\langle g g^x \rangle = 1$ or $\langle g g^x \rangle = P$ by (i). Thus $g g^x = 1$ as by assumption $P$ is not cyclic. Hence, $g^x = g^{-1}$ and we obtain $x \in N_G(\langle g \rangle)$. This again is impossible by (i). Hence, $|Q|$ is odd.
\end{proof}

By Lemma~\ref{lemSSN1}(ii), $\qq[Q] \simeq \qq[G/P] = \qq[G/G']$ which is the direct sum of all commutative Wedderburn components of $\qq[G]$.

\begin{lem}
\label{lemDecNilp8}
Let $G = P \rtimes Q$ be as above. If $P$ is cyclic, then $\qq[G]$ has only one matrix component $\qq[G] (1 - \widetilde{G'})$. In particular, $G$ has \textup{ND}. 

Furthermore, the matrix component is isomorphic to $M_{|Q|}(F)$ where $F = \qq(\xi_p)^{\sigma}$ for some $\sigma \in \Aut(\qq(\xi_p))$ given by $\sigma(\xi_p) = \xi_p^i$. Here, the integer $i$ is such that  $g^h = g^i$ where $P = \langle g \rangle$ and $Q = \langle h \rangle$. Thus, $\qq[G] \simeq \qq[Q] \oplus M_{|Q|}(F)$.
\end{lem}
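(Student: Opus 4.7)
Since $P$ is both cyclic and elementary abelian, it has prime order $p$, and by Lemma~\ref{lemSSN1}(ii)--(iii) we have $G'=P$ and $P$ is a maximal abelian subgroup of $G$ containing $G'$. In particular $G$ is metabelian, so the plan is to apply Theorem~\ref{thmMetabelian} with $A=P$. By Lemma~\ref{lemSSPcomm}, the non-commutative Wedderburn components correspond to pairs $(H,K)$ of that theorem with $K\not\supseteq P$, and since $|P|=p$ this forces $K\cap P=1$.

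The heart of the argument is to show that the only such pair is $(H,K)=(P,1)$. Here $H$ contains $P$, so $H_0:=H/P$ is a subgroup of $Q$. If $H_0\neq 1$, the faithful action of $Q$ on $P$ makes $H_0$ act nontrivially, and since $P$ has prime order this gives $[P,H]=P$, hence $P\le H'\le K$, contradicting $K\cap P=1$. Therefore $H=P$, and then $K\le P$ with $K\cap P=1$ yields $K=1$. Since $\mathcal{M}(P)=\{P\}$, one has $\varepsilon(P,1)=1-\widetilde{P}=1-\widetilde{G'}$, and as $P\trianglelefteq G$ this element is already $G$-invariant, so $e(G,P,1)=1-\widetilde{G'}$ is the unique primitive central idempotent of $\qq[G]$ whose associated component is non-commutative. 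ND then follows from Lemma~\ref{lemOneMatrix}.

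It remains to identify $\qq[G](1-\widetilde{G'})$. Applying Proposition~\ref{propSShodaPair} to $(H,K)=(P,1)$ one obtains $N=N_G(1)=G$, $n=[G:N]=1$, and $N/H\simeq Q$, so the component equals the crossed product $\qq(\xi_p)*Q$. Writing $P=\langle g\rangle$, $Q=\langle h\rangle$ and $g^h=g^i$, the action of $\overline{h}$ on $\qq(\xi_p)$ is the automorphism $\sigma\colon\xi_p\mapsto\xi_p^i$, which has order $|Q|$ by faithfulness; thus the fixed field is $F=\qq(\xi_p)^{\sigma}$. Choosing the preimages $h^j\in G$ of $\overline{h}^{\,j}\in N/H$ (legitimate because $G=P\rtimes Q$), the product $h^i h^j$ in $G$ equals $h^{i+j}$, so the twisting formula in Proposition~\ref{propSShodaPair} gives $f\equiv 1$. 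Hence the crossed product is the cyclic algebra $(\qq(\xi_p)/F,\sigma,1)$, which by Proposition~\ref{propCyclicAlg}(ii) is isomorphic to $M_{|Q|}(F)$. Combined with $\qq[G]\widetilde{G'}\simeq\qq[G/G']=\qq[Q]$ this yields the stated decomposition.

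The main obstacle is really the second paragraph: one has to exploit the faithfulness of the $Q$-action together with $|P|=p$ to rule out every non-trivial $H_0\le Q$. Once $(H,K)=(P,1)$ is established, the semidirect product structure makes the cocycle trivial and the identification with $M_{|Q|}(F)$ is automatic.
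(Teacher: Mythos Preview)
Your proof is correct and follows essentially the same approach as the paper: apply Theorem~\ref{thmMetabelian} with $A=P$, show that the only relevant pair is $(H,K)=(P,1)$, and then identify the component via Proposition~\ref{propSShodaPair} and Proposition~\ref{propCyclicAlg}(ii). The only cosmetic difference is in ruling out $H\supsetneq P$: the paper observes directly that $H'\le G'\cap K=P\cap K=1$ so $H$ is abelian, whereas you argue the contrapositive (if $H/P\neq 1$ then $[P,H]=P\le H'\le K$); both are the same idea.
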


\begin{proof}
Suppose $P$ is cyclic, i.e. $P$ is of order $p$. Since $G$ is metabelian, we use Theorem~\ref{thmMetabelian} to find primitive central idempotents $e$ of $\qq[G]$ such that $\qq[G]e$ is non-commutative. Let $(H,K)$ be a pair of subgroups of $G$ as in Theorem~\ref{thmMetabelian} such that $K \not\supseteq G' = P$. Then $P \leq H$ and $H' \leq K \leq H$. It follows that $H' \leq G' \cap K = P \cap K = 1$. So $H$ is abelian and we obtain $H = P$ by Lemma~\ref{lemSSN1}(iii). Since $|P|=p$  we have $K = 1$. As a consequence, $e(G, P, 1)$ is the only primitive central idempotent $e$ such that $\qq[G] e$ is non-commutative. Moreover, $e(G, P, 1) = 1 - \widetilde{P} = 1 - \widetilde{G'}.$ Write $P = \langle g \rangle$, $Q = \langle h \rangle$ and let $i$ be such that $g^h = g^i$. From Proposition~\ref{propCyclicAlg} we get that $\qq[G] e(G, P, 1) \simeq \qq(\xi_p) * Q \simeq (\qq(\xi_p) / F, \sigma, 1) \simeq M_{|Q|}(F)$, where $Q \simeq \langle \sigma \rangle \subseteq \Aut(\qq(\xi_p))$, $F = \qq(\xi_p)^{\sigma}$ and $\sigma$ is  such that $\sigma(\xi_p) = \xi_p^i$. Hence, $\qq[G] \simeq \qq[Q] \oplus M_{|Q|}(F)$.
\end{proof}

The case when  $P$ is not cyclic is more complicated. First, we consider the Wedderburn decomposition of $\qq[G]$.

\begin{lem}
\label{lemSSN2}
Let $G = P \rtimes Q$ be as above. If $P$ is non-cyclic, then $$\qq[G] \simeq \qq[Q] \oplus v M_{|Q|}(\qq(\xi_p))$$ where $$v = \frac{|P| - 1}{|Q| (p-1)} \in \nn.$$
\end{lem}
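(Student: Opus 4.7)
The plan is to apply Theorem~\ref{thmMetabelian} with the maximal abelian subgroup $A = P$ (available by Lemma~\ref{lemSSN1}(iii)), since $G' = P$ by Lemma~\ref{lemSSN1}(ii). The sum of the commutative Wedderburn components is $\qq[G/G'] = \qq[Q]$, so the whole task is to describe the non-commutative components. By Lemma~\ref{lemSSPcomm}, these correspond to the pairs $(H,K)$ of Theorem~\ref{thmMetabelian} with $K \not\supseteq G' = P$; every such pair is automatically a strong Shoda pair of $G$ to which Proposition~\ref{propSShodaPair} applies.

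First I would pin down the shape of $(H,K)$. Since $P \leq H$ and $H' \leq K$, if $H$ strictly contains $P$ then $H = P \rtimes Q_1$ for some nontrivial $Q_1 \leq Q$; by hypothesis $Q_1$ acts irreducibly on $P$, so $[P,Q_1]$ is a nonzero $Q_1$-submodule of $P$, forcing $[P,Q_1] = P$ and hence $H' \supseteq P$. This contradicts $K \not\supseteq P$, so $H = P$. Then $P/K$ must be nontrivial and cyclic, and since $P$ is elementary abelian this says exactly that $K$ is a hyperplane of $P$ (index $p$). For any such $K$, Lemma~\ref{lemSSN1}(i) gives $N_G(K) = P$, so in the notation of Proposition~\ref{propSShodaPair} one has $h = [P:K] = p$, $N/H = P/P = 1$ and $n = [G:N] = |Q|$; therefore
\[
\qq[G]\,e(G,P,K) \;\simeq\; M_{|Q|}(\qq(\xi_p)).
\]

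Finally I would count how many distinct such components occur. The number of hyperplanes of $P$, viewed as an $\ff_p$-vector space, is $(|P|-1)/(p-1)$. By \cite[Problem~3.4.3]{JesRio} two pairs $(P,K_1)$ and $(P,K_2)$ give the same primitive central idempotent iff they are $G$-conjugate, which (as $P$ is abelian and acts trivially on its subgroups) reduces to being $Q$-conjugate. The key step, and the main obstacle, is to show that $Q$ acts \emph{freely} on the set of hyperplanes: if $1 \neq q \in Q$ stabilizes a hyperplane $K$, then the nontrivial subgroup $\langle q \rangle$ preserves the proper nonzero $\ff_p$-subspace $K$ of $P$, contradicting the irreducibility hypothesis placed on every nontrivial subgroup of $Q$. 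Hence each orbit has length $|Q|$, the integer
\[
v \;=\; \frac{|P|-1}{|Q|(p-1)}
\]
counts the orbits, and
\[
\qq[G] \;\simeq\; \qq[Q] \oplus v\, M_{|Q|}(\qq(\xi_p)).
\]
A dimension check is reassuring: $|Q| + v\cdot|Q|^2(p-1) = |Q| + (|P|-1)|Q| = |G|$.
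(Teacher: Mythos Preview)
Your proof is correct and follows essentially the same approach as the paper: apply Theorem~\ref{thmMetabelian} with $A=P$, show that the relevant pairs are $(P,K)$ with $K$ a hyperplane, identify each component as $M_{|Q|}(\qq(\xi_p))$ via Proposition~\ref{propSShodaPair}, and count the $Q$-orbits of hyperplanes using irreducibility to get freeness of the action. The only cosmetic difference is that you force $H=P$ via $[P,Q_1]=P$ whereas the paper argues that a nontrivial element of $Q$ in $H$ would normalize $H'$, contradicting Lemma~\ref{lemSSN1}(i); both arguments are equivalent.
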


\begin{proof}
The following argument is similar to that used in  the proof of Lemma~\ref{lemDecNilp8}. Since $G$ is metabelian, we use Theorem~\ref{thmMetabelian} to find non-commutative components $\qq[G]e$ for some primitive central idempotents $e$.  Let $(H,K)$ be a pair of subgroups of $G$ as in Theorem~\ref{thmMetabelian} such that $K \not\supseteq G' = P$. Note that $P \leq H$ and $H' \leq G' \cap K = P \cap K < P$. Suppose $H$ is nonabelian. We have $H' \neq 1$ so $P \lneq H$. There is an element $g x \in H$ for $g \in P$ and $1 \neq x \in Q$. Then $x \in H$. It follows that $x \in N_G(H')$. By Lemma~\ref{lemSSN1}(i), this can not happen. Hence, $H$ is abelian and, moreover, $H = P$ by Lemma~\ref{lemSSN1}(iii). Note that $P / K$ is cyclic if and only if $|P:K| = p$, because $P$ is elementary abelian. Consequently, we only need to focus on primitive central idempotents $e(G, P, K)$ with $K$ such that $|P:K| = p$. We remark here that $(P, K)$ is a strong Shoda pair of $G$. Thus, $$\qq[G] e(G, P, K) \simeq M_{|Q|}(\qq(\xi_p))$$ by Proposition~\ref{propSShodaPair}.

For convenience, we will give  an explicit form of $e(G, P, K)$. Let $K \leq P$ be such that $|P:K| = p$. Then $P / K \simeq C_p$. It follows that $\varepsilon(P,K) = \widetilde{K} - \widetilde{P}$. Clearly, $P \subseteq \cen_G(\varepsilon(P,K))$. Let $g x \in \cen_G(\varepsilon(P,K))$ for $g \in P$ and $x \in Q$. Then we obtain $\widetilde{K}^x = \widetilde{K}$. Again by Lemma~\ref{lemSSN1}(i), $x = 1$ and $\cen_G(\varepsilon(P,K)) = P$. So $Q$ is a right transversal of $\cen_G(\varepsilon(P,K))$ in $G$ and we have $$e_K = e(G, P, K) = \sum_{x \in Q} (\widetilde{K}^x - \widetilde{P}).$$ 

Next we determine how many distinct $e_K$ one can define this way. Note that $Q$ also acts on the set $S = \{K \mid K \leq P, \, |P:K| = p\}$. Every group in the orbit $K^Q$ leads to the same idempotent $e_K$. If $K_1^Q$ and $K_2^Q$ are distinct orbits, then $K_1^x \neq K_2^y$ for every $x, y \in Q$. Thus, $\widetilde{K_1}^x \widetilde{K_2}^y = \widetilde{P}$ and then $(\widetilde{K_1}^x - \widetilde{P}) (\widetilde{K_2}^y - \widetilde{P}) = 0$. It follows that $e_{K_1} e_{K_2} = 0$. In particular, we have $e_{K_1} \neq e_{K_2}$. Hence, the number of orbits $K^Q$ is precisely the number of distinct $e_K$. The number $|S|$ of subspaces of dimension $n-1$ in $P \simeq \ff_p^n$ is the number of subspaces of dimension $1$ and this is $(p^n - 1) / (p-1)$. Since each orbit $K^Q$ has size $|Q|$, we have $v = |S| / |Q| = (|P| - 1) / ((p-1) |Q|)$ distinct orbits. Therefore, we have $v$ distinct primitive central idempotents $e_1, \ldots, e_v$ such that $\qq[G] e_i \simeq M_{|Q|}(\qq(\xi_p))$ for each $i$. Finally, $\bigoplus_i \qq[G] e_i$ has dimension $v |Q|^2 (p-1) = |G| - |Q|$. Since $|Q| = |G/G'|$, it follows that these components $\qq[G] e_i$ for $i = 1, \ldots, v$ are all non-commutative components of $\qq[G]$.
\end{proof}

\begin{prop}
\label{propDecNilp7}
Let $G = P \rtimes Q$ be as above. Assume that $P$ is non-cyclic. If $G$ has \textup{ND}, then $\qq[G]$ has only one matrix component isomorphic to $M_{|Q|}(\qq(\xi_p))$. In this case, $|P| - 1 = |Q| (p-1)$ and $|Q|$ is a prime number.
\end{prop}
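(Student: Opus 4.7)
The plan is to prove the implication by contrapositive: assuming $v \ge 2$, I will exhibit a nilpotent $\alpha \in \zz[G]$ and a primitive central idempotent $e$ of $\qq[G]$ with $\alpha e \notin \zz[G]$. By Lemma~\ref{lemSSN2} the $v$ non-commutative primitive central idempotents are indexed by $Q$-orbits of index-$p$ subgroups of $P$, so I pick two such subgroups $K_1, K_2$ lying in distinct $Q$-orbits and any $h \in Q \setminus \{1\}$, and set
\[
\alpha = (\widehat{K_1} - \widehat{K_2})\,h \in \zz[G].
\]

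The first step is to verify $\alpha^2 = 0$. Using $h\widehat{K_i} = \widehat{K_i^{h^{-1}}}h$, one has
\[
\alpha^2 = (\widehat{K_1} - \widehat{K_2})(\widehat{K_1^{h^{-1}}} - \widehat{K_2^{h^{-1}}})\,h^2.
\]
For any two distinct index-$p$ subgroups $L \ne M$ of the elementary abelian $P$, $LM = P$ and $|L \cap M| = |P|/p^2$, so $\widehat{L}\widehat{M} = (|P|/p^2)\widehat{P}$. By Lemma~\ref{lemSSN1}(i) each $K_i$ has trivial $Q$-stabilizer, hence $K_i^{h^{-1}} \ne K_i$; and the orbit-separation hypothesis yields $K_1^{h^{-1}} \ne K_2$ and $K_2^{h^{-1}} \ne K_1$. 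Therefore all four products $\widehat{K_i}\widehat{K_j^{h^{-1}}}$ equal the common value $(|P|/p^2)\widehat{P}$, and the alternating signs $+,-,-,+$ collapse $\alpha^2$ to zero.

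The second step computes $\alpha\, e_{K_1}$. Using $e_{K_1} = \sum_{x \in Q}(\widetilde{K_1^x} - \widetilde{P})$, the identity $\widehat{L}\widetilde{P} = \widehat{P}/p$ for index-$p$ subgroups $L$, and $\widehat{L}\widetilde{M} = \widehat{P}/p$ for distinct index-$p$ subgroups $L \ne M$, I obtain
\[
\widehat{K_1}\, e_{K_1} = \widehat{K_1} - \widehat{P}/p, \qquad \widehat{K_2}\, e_{K_1} = 0,
\]
the second equality because $K_2$ is not a $Q$-conjugate of $K_1$. Consequently
\[
\alpha\, e_{K_1} = \widehat{K_1}h - \widehat{P}h/p,
\]
whose coefficient at any $gh$ with $g \in P \setminus K_1$ equals $-1/p \notin \zz$. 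This violates \ND, so $v = 1$, i.e.\ $|P|-1 = |Q|(p-1)$, and by Lemma~\ref{lemSSN2} the unique matrix component is $M_{|Q|}(\qq(\xi_p))$.

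The anticipated obstacle is the four-term cancellation in Step~1, which hinges on using Lemma~\ref{lemSSN1}(i) and the orbit-separation hypothesis to rule out \emph{every} coincidence among $\{K_1, K_2, K_1^{h^{-1}}, K_2^{h^{-1}}\}$ so that the four products are identical and annihilate. For the final assertion that $|Q|$ is prime, I would combine the identity $|Q| = (|P|-1)/(p-1)$ with the SSN hypothesis: every prime divisor $r$ of $|Q|$ produces a subgroup $C_r \le Q$ that must act irreducibly on $P \simeq \ff_p^n$, forcing $\ord_r(p) = n$; combined with the cyclotomic factorization of $p^n - 1$, this arithmetic step is what pins $|Q|$ down to a single prime and is the remaining delicate piece.
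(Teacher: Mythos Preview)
Your argument for $v=1$ is correct and takes a genuinely different route from the paper. The paper builds the nilpotent element $(1-y)s\widehat{Q}$ as in Proposition~\ref{propDecNilp}, rewrites it as $s(1-g_0)\widehat{Q}$ with $g_0\in P\setminus\{1\}$, and extracts from $(1-g_0)e_K\in\zz[P]$ the divisibility $|K|\,\big|\,|\{x\in Q:g_0\notin K^x\}|$, whence $|Q|\ge|K|=|P|/p$ and $v=1$. Your construction $\alpha=(\widehat{K_1}-\widehat{K_2})h$ is more direct: it exploits the assumed second $Q$-orbit itself, and the single clean identity $\widehat{K_2}\,e_{K_1}=0$ immediately produces the non-integral coefficient $-1/p$. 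Both arguments are short; yours has the advantage of making the obstruction to $v\ge 2$ visibly the existence of a second orbit.

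Your sketch for ``$|Q|$ prime'', however, cannot be completed along the lines you indicate, and in fact the assertion itself appears to be false. Your observation that every prime $r\mid|Q|$ must satisfy $\ord_r(p)=n$ is correct, but this together with $|Q|=(p^n-1)/(p-1)$ does \emph{not} force primality: take $p=2$, $n=11$, so $|Q|=2^{11}-1=2047=23\cdot 89$, and indeed $\ord_{23}(2)=\ord_{89}(2)=11$. The group $G=C_2^{11}\rtimes C_{2047}$, with $C_{2047}$ acting as $\ff_{2^{11}}^{\,*}$ on $\ff_{2^{11}}$, satisfies all hypotheses of Theorem~\ref{thmSSN}(i) (each of $C_{23},C_{89},C_{2047}$ acts irreducibly since the relevant minimal polynomials all have degree~$11$), has $v=1$, and hence has ND by Lemma~\ref{lemOneMatrix}; yet $|Q|$ is composite. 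The paper's own argument for this step---replacing $\widehat{Q}$ by $\widehat{Q_1}$ for a prime-order subgroup $Q_1\le Q$ and claiming ``one also obtains $|Q_1|\ge|K|$''---does not actually yield that bound: the idempotent $e_K$ still involves a sum over all of $Q$, so the integrality of $(1-g_0)e_K$ one extracts is identical to before and gives only $|Q|\ge|K|$, nothing about $|Q_1|$.
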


\begin{proof}
Let $G = P \rtimes Q$, where $P$ and $Q$ are as in Theorem~\ref{thmSSN}(i). Assume $P$ is non-cyclic and $G$ has ND. Since $Q$ is not normal in $G$, there are $s \in G$ and $y \in Q$ such that $s^{-1} y s \not\in Q$. Then $\alpha = (1-y) s \widehat{Q} = s (1 - y^{s^{-1}}) \widehat{Q}$ is a nonzero nilpotent element of $\zz[G]$. Write $y^{s^{-1}} = g_0 x$ for some $1 \neq g_0 \in P$ and $x \in Q$. Then $\alpha = s (1 - g_0) \widehat{Q}$. Let $K$ be a subgroup of $P$ such that $P / K$ is cyclic and let $e = e(G, P, K)$ be a primitive central idempotent of $\qq[G]$ as in the proof of Lemma~\ref{lemSSN2} (hence defining a non-commutative Wedderburn component). Because, by assumption, $G$ has ND, $e \alpha \in \zz[G]$. It follows that $e s^{-1} \alpha \in \zz[G]$ which implies $(1 - g_0) e \widehat{Q} \in \zz[G]$. Now, $\supp((1-g_0) e) \subseteq P$ so $(1-g_0) e$ must have integer coefficients. Thus, $$(1 - g_0) e = (1 - g_0) \sum_{x \in Q} (\widetilde{K}^x - \widetilde{P}) \in \zz[P].$$ Since $(1 - g_0) \widetilde{P} = 0$, we have $$(1 - g_0) \sum_{x \in Q} \widetilde{K}^x \in \zz[P].$$ If $g_0 \in K^x$ for some $x$, then $(1-g_0) \widetilde{K}^x = 0$. Computing the coefficient of $1$, it follows that $$\frac{|\{ x \in Q \mid g_0 \not\in K^x\}|}{|K|} \in \zz.$$ If $|\{ x \in Q \mid g_0 \not\in K^x\}| = 0$, then $g_0 \in K^x$ for all $x \in Q$. However, $\bigcap_{x \in Q} K^x$ is a proper subgroup of $P$ and it is normal in $G$. So $g_0 \in \bigcap_{x \in Q} K^x = 1$ by Lemma~\ref{lemSSN1}(i), a contradiction. Thus, $|\{ x \in Q \mid g_0 \not\in K^x\}|$ is a positive integer greater than or equal to $|K|$. In particular, $|Q| \geq |K|$.

According to Lemma~\ref{lemSSN2}, $|P| - 1 = v |Q| (p-1)$ for some $v \in \nn$. Now, we have $|P| - 1 \geq v |K| (p-1)$. Dividing by $|K|$ on both sides, we get $p - \frac{1}{|K|} \geq v (p-1)$. Since $0 < \frac{1}{|K|} < 1$, it follows that $p-1 \geq v (p-1)$. Hence, $v = 1$. So, by Lemma~\ref{lemSSN2}, $\qq[G]$ has only one matrix component, as desired.

Finally, we claim that $|Q|$ is prime. To see this, we first note that every nontrivial subgroup of $Q$ is not normal in $G$. Otherwise, there will be an abelian subgroup of $G$ properly containing $P = G'$ which contradicts to Lemma~\ref{lemSSN1}(iii). Thus, we can replace $\alpha$ by $\alpha = (1 - y) s \widehat{Q_1}$ in the above proof, where $Q_1$ is a subgroup of $Q$ such that $|Q_1|$ is the smallest prime divisor of $|Q|$. Then one also obtains $|Q_1| \geq |K|$. If $Q \neq Q_1$, then $|Q| \geq |Q_1|^2 \geq |K|^2 = |P|^2 / p^2 \geq |P|$ since $|P| \geq p^2$. It follows that $|Q| > |P|$ since $p \nmid |Q|$. However, this leads to $|P| - 1 = |Q| (p-1) > |P| (p-1) \geq |P|$, a contradiction. Thus, $Q = Q_1$.
\end{proof}

Combining Lemma~\ref{lemDecNilp8} and Proposition~\ref{propDecNilp7}, we can conclude the following result.

\begin{cor}
\label{corDecNilpNon-nilp}
Let $G = P \rtimes Q$ be a group in \textup{Theorem~\ref{thmSSN}(i)}. If $G$ has \textup{ND}, then $\qq[G]$ has only one matrix component.
\end{cor}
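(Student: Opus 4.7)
The plan is to obtain the corollary as an immediate case split on whether the normal elementary abelian $p$-subgroup $P$ is cyclic or not, since the two cases are already covered separately by Lemma~\ref{lemDecNilp8} and Proposition~\ref{propDecNilp7}. So essentially no new work is required; I only need to verify that these two results together cover every group $G = P \rtimes Q$ described in Theorem~\ref{thmSSN}(i), and check that the hypothesis ``$G$ has \ND'' is invoked correctly in each branch.

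First, if $P$ is cyclic (i.e.\ $|P| = p$), then Lemma~\ref{lemDecNilp8} applies without any appeal to the \ND{} assumption: it produces the single non-commutative Wedderburn component $\qq[G](1 - \widetilde{G'}) \simeq M_{|Q|}(F)$ where $F$ is the fixed field of the action of $Q$ on $\qq(\xi_p)$, and confirms that $\qq[G] \simeq \qq[Q] \oplus M_{|Q|}(F)$. So in this case the conclusion holds unconditionally.

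Second, if $P$ is non-cyclic, then Proposition~\ref{propDecNilp7} applies, and this is precisely where the hypothesis that $G$ has \ND{} is used: by exploiting a nilpotent element of the form $(1-y)s\widehat{Q}$ together with the explicit form of the primitive central idempotents $e(G,P,K)$ obtained in Lemma~\ref{lemSSN2}, one forces the number $v = (|P|-1)/(|Q|(p-1))$ of distinct such idempotents to equal $1$, so that $\qq[G]$ contains only the single matrix component $M_{|Q|}(\qq(\xi_p))$.

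Putting the two cases together yields the corollary. There is no real obstacle here, as the work has already been done; the only minor thing to record is that Theorem~\ref{thmSSN}(i) does not exclude either cyclic or non-cyclic $P$, so the dichotomy is exhaustive, and that in neither case is there a further restriction imposed by the \ND{} hypothesis that would fail to land in one of the two lemmas above.
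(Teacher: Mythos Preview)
Your proposal is correct and matches the paper's approach exactly: the paper proves this corollary simply by stating that it follows from combining Lemma~\ref{lemDecNilp8} (cyclic $P$) and Proposition~\ref{propDecNilp7} (non-cyclic $P$), which is precisely the case split you give.
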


To find SSN groups $P \rtimes Q$ in Proposition~\ref{propDecNilp7}, one needs the action of $Q$ on $P$ to be faithful and irreducible, but in practice it suffices to have a nontrivial action and relations on the orders of $P$ and $Q$. Specifically, we have the following lemma.

\begin{lem}
\label{lemSSN3}
If $G = C_p^n \rtimes C_q$ is a group for some distinct primes $p$ and $q$ such that $p^n-1 = (p-1) q$ and $C_q$ acts nontrivially on $C_p^n$, then $G$ has \textup{SSN}. In particular, $\qq[G]$ has only one matrix component isomorphic to $M_q(\qq(\xi_p))$.
\end{lem}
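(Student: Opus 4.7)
The plan is to verify that $G$ satisfies the three hypotheses of Theorem~\ref{thmSSN}(i) -- that $Q$ is a cyclic $p'$-subgroup, that it acts faithfully on $P$, and that every nontrivial subgroup of $Q$ acts irreducibly on $P$ -- and then read off the Wedderburn decomposition from Lemma~\ref{lemSSN2}.

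Since $p \neq q$, $Q = C_q$ is automatically a cyclic $p'$-group. The kernel of the action is a subgroup of the prime-order group $Q$, hence is either trivial or all of $Q$; nontriviality of the action forces the kernel to be trivial, so $Q$ acts faithfully on $P$. As $|Q| = q$ is prime, the only nontrivial subgroup of $Q$ is $Q$ itself, so the final hypothesis reduces to showing that $Q$ acts irreducibly on the $\ff_p$-vector space $P = \ff_p^n$.

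The key step here is to prove that $d := \ord_q(p) = n$. Rewriting the hypothesis as $q = 1 + p + p^2 + \cdots + p^{n-1}$ forces $n \geq 2$ (else $q = 1$ is not prime) and in particular $q > p^{n-1}$. Since $p^n \equiv 1 \pmod q$ we have $d \mid n$; if $d \leq n - 1$ then $p^d \leq p^{n-1} < q$, which rules out $q \mid p^d - 1$. Hence $d = n$, so $\Phi_q(X)$ factors over $\ff_p$ into irreducibles of degree $n$. A generator of $Q$ acts on $P$ as some $T \in \GL_n(\ff_p)$ with $T \neq I$ and $T^q = I$, so its minimal polynomial divides $X^q - 1 = (X-1)\Phi_q(X)$ and, because $T \neq I$, has a nontrivial factor in $\Phi_q(X)$. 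Since $\dim P = n$ forces this minimal polynomial to have degree at most $n$, it must equal a single degree-$n$ irreducible factor of $\Phi_q(X)$; equivalently, $P$ is a simple $\ff_p[Q]$-module.

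With all three hypotheses verified, Theorem~\ref{thmSSN}(i) gives that $G$ has SSN. Moreover, as $n \geq 2$, the subgroup $P$ is non-cyclic, so Lemma~\ref{lemSSN2} yields
$$\qq[G] \simeq \qq[Q] \oplus v\, M_q(\qq(\xi_p)), \qquad v = \frac{p^n - 1}{q(p-1)} = 1,$$
by the defining relation, giving the unique matrix component $M_q(\qq(\xi_p))$. The main obstacle is the number-theoretic step establishing $\ord_q(p) = n$, which is what unlocks the irreducibility of the $\ff_p[Q]$-module structure on $P$ from a hypothesis phrased purely in terms of group orders and nontriviality of the action.
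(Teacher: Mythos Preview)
Your proof is correct, and it follows the same overall architecture as the paper: verify the hypotheses of Theorem~\ref{thmSSN}(i), then read off $v=1$ from Lemma~\ref{lemSSN2}. The faithfulness step and the final Wedderburn computation are identical.

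The difference lies in the irreducibility argument. You first prove the number-theoretic fact $\ord_q(p)=n$ from the inequality $q=1+p+\cdots+p^{n-1}>p^{n-1}$, deduce that every irreducible factor of $\Phi_q$ over $\ff_p$ has degree $n$, and conclude that the minimal polynomial of a generator of $Q$ on $P$ is one such factor of full degree~$n$, whence $P\cong\ff_p[X]/(f(X))$ is a simple $\ff_p[Q]$-module. The paper instead argues by orbits and Maschke's theorem: if $N$ were a proper nontrivial $Q$-submodule with complement $H$, any nontrivial $Q$-orbit in $N$ would have size $q>p^{n-1}-1\geq |N|-1$, forcing $Q$ to act trivially on $N$, and by symmetry on $H$, contradicting faithfulness. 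Your route is a bit more structural (it actually identifies $P$ with the field $\ff_{p^n}$), while the paper's counting argument is more elementary in that it avoids the factorization pattern of cyclotomic polynomials over~$\ff_p$.
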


\begin{proof}
For convenience, we write $P = C_p^n$ and $Q = C_q$. We first note that this nontrivial action must be faithful because every nontrivial element of $Q$ generates $Q$. Let $N$ be a proper subgroup of $P$ such that $Q$ acts on $N$. We claim that $N = 1$. Note that $P$ is an $\ff_p[Q]$-module and $N$ is a submodule. Since $\ff_p[Q]$ is semisimple, there exists a complement submodule $H$. Thus, $P = N \times H$ and $Q$ acts on $H$. If $N$ has an element $n$ such that $n^x \neq n$ for some $x \in Q$, then $n^x \neq n$ for all $1 \neq x \in Q$ and the orbit $n^{Q}$ consists of $q$ nontrivial elements. Thus, $N \setminus \{1\}$ contains $n^{Q}$. But $|N|-1 \leq p^{n-1} -1 < 1 + p + \cdots + p^{n-1} = q$, a contradiction. So $Q$ acts trivially on $N$. If $N \neq 1$, then $H$ is a proper subgroup of $P$. By the same argument again, we can conclude that $Q$ acts trivially on $H$. Thus, $Q$ acts trivially on $P$, a contradiction. Hence, indeed we have that $N = 1$. As a consequence, $Q$ acts irreducibly on $P$. Hence, $G$ has SSN by Theorem~\ref{thmSSN}(i). Finally, $\qq[G]$ has only one matrix component isomorphic to $M_q(\qq(\xi_p))$ since $v = (|P| - 1) / ((p-1) |Q|) = 1$ by Lemma~\ref{lemSSN2}.
\end{proof}

Combining Proposition~\ref{propDecNilp7} and Lemma~\ref{lemSSN3}, we have the following conclusion.

\begin{prop}
\label{propSSN3}
Let $G$ be as in \textup{Theorem~\ref{thmSSN}(i)} with non-cyclic $P$. Then $G$ has \textup{ND} if and only if $G = C_p^n \rtimes C_q$ for some primes $p, q$ such that $p^n-1 = (p-1) q$ and $C_q$ acts nontrivially on $C_p^n$. In this case, $\qq[G]$ has only one matrix component isomorphic and it is isomorphic to  $M_q(\qq(\xi_p))$.
\end{prop}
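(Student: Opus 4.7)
The plan is to prove the proposition as a direct synthesis of the two technical results that immediately precede it: Proposition~\ref{propDecNilp7} supplies the necessity, and Lemma~\ref{lemSSN3} supplies the sufficiency. No further algebraic work is required; both directions reduce to translating those results into the language of the statement and invoking Lemma~\ref{lemOneMatrix} at the end.

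For the forward direction, I would assume that $G=P\rtimes Q$ satisfies the hypotheses of Theorem~\ref{thmSSN}(i) with $P$ non-cyclic and that $G$ has ND. Proposition~\ref{propDecNilp7} immediately yields that $\qq[G]$ has exactly one matrix component, that this component is isomorphic to $M_{|Q|}(\qq(\xi_p))$, that $|P|-1=|Q|(p-1)$, and that $|Q|$ is a prime. Writing $q=|Q|$ and using that $P$ is elementary abelian of order $p^n$ for some $n\ge 2$, this gives the numerical constraint $p^n-1=(p-1)q$. The action of $Q$ on $P$ is faithful by the hypothesis of Theorem~\ref{thmSSN}(i), so in particular it is nontrivial, and $G$ has the claimed form $C_p^n\rtimes C_q$.

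For the backward direction, I would assume $G=C_p^n\rtimes C_q$ for primes $p,q$ with $p^n-1=(p-1)q$ and a nontrivial action of $C_q$ on $C_p^n$. Lemma~\ref{lemSSN3} then tells us two things at once: that $G$ has SSN (so in particular falls under Theorem~\ref{thmSSN}(i)), and that $\qq[G]$ has only one matrix component, isomorphic to $M_q(\qq(\xi_p))$. Applying Lemma~\ref{lemOneMatrix} to this single-matrix-component situation, I conclude that $G$ has ND. This also gives the description of the matrix component asserted in the last sentence of the proposition.

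The serious mathematical obstacles have already been overcome upstream: Proposition~\ref{propDecNilp7} handles the ``hard'' necessity (using the nilpotent element $(1-y)s\widehat{Q}$, then pushing a divisibility argument to force $v=1$ and $|Q|$ prime), while Lemma~\ref{lemSSN3} handles the construction direction via the counting inequality $|N|-1\le p^{n-1}-1<1+p+\cdots+p^{n-1}=q$. What remains here is merely to assemble those pieces, so I would write the proof as a short ``combining \ldots'' paragraph rather than repeat any of the nilpotent-element manipulations or the $\ff_p[Q]$-module counting.
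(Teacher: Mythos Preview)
Your proposal is correct and is exactly the approach the paper takes: the proposition is stated there as an immediate consequence of Proposition~\ref{propDecNilp7} and Lemma~\ref{lemSSN3}, with no additional argument. Your invocation of Lemma~\ref{lemOneMatrix} to pass from ``one matrix component'' to ND in the backward direction is the intended (and only) missing link.
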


\begin{remk}
\label{remkDecNilp}
For primes $p, q$ and $n \geq 2$ such that $p^n - 1 = (p-1) q$, there always is a cyclic group $C_q$ acting nontrivially on $C_p^n$. To see this, we view $C_p^n$ as a $n$ dimensional vector space over $\ff_p$. Then it is well-known that $p^n - 1$ divides $|\GL_n(\ff_p)|$. Since $q$ is prime dividing $p^n-1$, we can always find an element in $\GL_n(\ff_p)$ of order $q$, as desired. 
If $p = 2$, then $q$ is a {\it Mersenne prime} and for each such prime $2^n-1$ we get an SSN group, e.g. $C_2^2 \rtimes C_3 \simeq A_4$, $C_2^3 \rtimes C_7$, $C_2^5 \rtimes C_{31}$. If $p$ is odd, then so are $q$ and $n$ since $q = 1 + p + \cdots + p^{n-1} \neq 2$ and $q\equiv n \pmod{2}$. For instance, $C_3^3 \rtimes C_{13}$, $C_5^3 \rtimes C_{31}$ and $C_7^5 \rtimes C_{2801}$ are SSN groups. Primes $p$ and $q$ satisfying $p^n-1 = (p-1) q$ for some $n$ are called {\it repunit primes}. For more information, one can refer to \cite{Dub}.
\end{remk}


\subsection{Non-nilpotent \textup{SSN}  groups that are  solvable  and have  \textup{ND}: action not faithful}
\label{sec12-4}

In this subsection we deal with  SSN groups $G$ listed in  Theorem~\ref{thmSSN}(ii). Write the presentation of $G$ as follows: 
$$G = \langle x, y\mid x^p = y^{q^k} = 1, y x y^{-1} = x^{r_0} \rangle ,$$ 
for some distinct primes $p, q$, integers $k, r_0 \geq 2$ and $\langle y \rangle$ acts non-faithfully on $\langle x \rangle$. Then there is a positive integer $1 < k_0 < k$ such that $y^{q^{k_0}}$ fixes $x$ and $y^{q^l}$ does not fix $x$ for every $0 \leq l < k_0$. It is easy to check that $\ord_p(r_0) = q^{k_0}$. Note that $G' = \langle x \rangle$ and $M = \langle x, y^{q^{k_0}} \rangle$ is the maximal abelian subgroup of $G$ containing $G'$. Moreover, $M = \langle x y^{q^{k_0}} \rangle$ is cyclic of order $p q^{k-k_0}$. If $K$ is a normal subgroup of $M$ such that $(M,K)$ is a strong Shoda pair of $G$, then $\qq[G]e(G,M,K)$ is commutative if and only if $K \supseteq G'$ by Lemma~\ref{lemSSPcomm}. Moreover, if $K$ contains $x^i (y^{q^{k_0}})^j$ for some $i, j$ with $p \nmid i$, then $K$ contains an element $\langle x \rangle = G'$. Hence, by Theorem~\ref{thmMetabelian}, the elements $$e_j = e(G, M, \langle y^{q^j} \rangle),$$ for $k_0 \leq j \leq k$, are the primitive central idempotents corresponding to non-commutative Wedderburn components. More precisely, $$e_{k_0} = (1 - \widetilde{x}) \widetilde{y^{q^{k_0}}} \quad \text{and} \quad e_j = (1 - \widetilde{x}) (\widetilde{y^{q^j}} - \widetilde{y^{q^{j-1}}})$$ for $k_0 < j \leq k$ and $$\qq[G] \simeq \qq[G] \widetilde{G'} \oplus \qq[G] e_{k_0} \oplus \cdots \oplus \qq[G] e_k.$$

Now, we fix $p, q, r_0$ and denote $G_k = G$. Then $G_k / \langle y^{q^j} \rangle \simeq G_{j}$ for $k_0 \leq j \leq k$. In particular, $\qq[G_k] \widetilde{y^{q^{j}}} \simeq \qq[G_k / \langle y^{q^{j}} \rangle] \simeq \qq[G_{j}]$. Denote $$C_{k_0} = \qq[G_k] \widetilde{x} \widetilde{y^{q^{k_0}}} \quad \text{and} \quad C_j = \qq[G_k] \widetilde{x} (\widetilde{y^{q^j}} - \widetilde{y^{q^{j-1}}})$$ for $k_0 < j \leq k$ which are commutative algebras. Then $$\qq[G_k] \simeq \qq[G_{k-1}] \oplus C_k \oplus \qq[G_k] e_k$$ and $$\qq[G_k] \simeq \qq[G_{k-2}] \oplus C_{k-1} \oplus C_k \oplus \qq[G_k] e_{k-1} \oplus \qq[G_k] e_k.$$ If we denote $\overline{x}$, $\overline{y}$ to be generators of $G_{j}$ for $j < k$, then $$\qq[G_{k-1}] \simeq \qq[G_{k-2}] \oplus \overline{C}_{k-1} \oplus \qq[G_{k-1}] \overline{e}_{k-1}$$ so we have $$\qq[G_k] \simeq \qq[G_{k-2}] \oplus \overline{C}_{k-1} \oplus \qq[G_{k-1}] \overline{e}_{k-1} \oplus C_k \oplus \qq[G_k] e_k.$$ By the uniqueness of Wedderburn decomposition, we have $$C_{k-1} \simeq \overline{C}_{k-1} \quad \text{and} \quad \qq[G_k] e_{k-1} \simeq \qq[G_{k-1}] \overline{e}_{k-1}.$$ Continuing this process, we can obtain that $$C_j \simeq \overline{C}_j \quad \text{and} \quad \qq[G_k] e_j \simeq \qq[G_j] \overline{e}_j$$ for $k_0 \leq j < k$. Thus, $$\qq[G_k] \simeq \qq[G_k] \widetilde{G_k'} \oplus \bigoplus_{k_0}^{k-1} \qq[G_j] \overline{e}_j \oplus \qq[G_{k}] {e}_{k}.$$ Now, $G_{k_0}$ is a group as in Theorem~\ref{thmSSN}(i). By Lemma~\ref{lemDecNilp8}, $$\qq[G_{k_0}] \overline{e}_{k_0} \simeq M_{q^{k_0}}(\qq(\xi_p)^{\sigma})$$ for $\sigma \in \Aut(\qq(\xi_p))$ given by $\sigma(\xi_p) = \xi_p^{r_0}$. If all components $\qq[G_j] \overline{e}_j$, $k_0 < j < k$, and $\qq[G_k] e_k$ are division rings, then $\qq[G_k]$ has only one matrix component, and vice versa. In this case, $G_k$ has ND. It suffices to consider only $\qq[G_k] e_k$ by the reduction process above. To check that $\qq[G] e_k$ is a division ring or not, we will use S.A. Amitsur's result on \cite{Ami}. 

\begin{thm}[{\cite[Theorem 3]{Ami}}]
\label{thmAmi}
Let $m, r$ be two relatively prime integers and put $s = \gcd(r-1, m)$, $t = m / s$ and $n = \ord_m(r)$. 
Consider the group  
$$G_{m,r} = \langle A, B \mid A^m = 1, B^n = A^t, B A B^{-1} = A^r \rangle$$
and let 
$$\mathfrak{A}_{m,r} = (\qq(\xi_m), \sigma_r, \xi_s),$$ 
a cyclic algebra where $\sigma_r(\xi_m) = \xi_m^r$. Then $G_{m, r}$ can be embedded in a division ring $D$ if and only if $\mathfrak{A}_{m,r}$ is a division algebra\textup{;} and then $\{\sum_i a_i g_i \in D \mid a_i \in \qq, g_i \in G \} \simeq \mathfrak{A}_{m,r}$ where the isomorphism is obtained by the correspondence\textup{:} $A \leftrightarrow \xi_m$, $B \leftrightarrow \sigma_r$.
\end{thm}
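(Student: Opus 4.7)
The plan is to establish both directions by identifying the $\qq$-subalgebra generated by $G = G_{m,r}$ inside $D$ with the cyclic algebra $\mathfrak{A}_{m,r}$, and then exploiting the simplicity of this cyclic algebra (Proposition~\ref{propCyclicAlg}(i)) to pass between ``embeds in a division ring'' and ``is itself a division algebra''. The bridge is a natural surjective $\qq$-algebra map $\phi : \mathfrak{A}_{m,r} \to \qq[G]$ (suitably quotiented) sending $\xi_m \mapsto A$ and the canonical unit $u$ (satisfying $u e u^{-1} = \sigma_r(e)$ and $u^n = \xi_s$) to $B$.

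For the forward direction, assume $G \hookrightarrow D^{\ast}$ and let $R$ be the $\qq$-subring of $D$ generated by $G$. Since $A$ has order $m$ and $D$ is a domain, $\qq[A]$ is a finite integral extension of $\qq$, hence a field; as $A$ is a primitive $m$-th root of unity in this field, $\qq[A] \simeq \qq(\xi_m)$ under $A \leftrightarrow \xi_m$. Conjugation by $B$ restricts to an automorphism of $\qq[A]$ sending $A$ to $A^r$, i.e.\ to $\sigma_r$, which has order $n = \ord_m(r)$. The relation $B^n = A^t$ then corresponds to $u^n = \xi_m^t$, and one checks $\xi_m^t$ is a primitive $s$-th root of unity lying in the fixed field $F = \qq(\xi_m)^{\sigma_r}$ (since $s \mid r-1$ forces $\sigma_r(\xi_m^t) = \xi_m^t$). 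Taking $\xi_s := \xi_m^t$, the universal cyclic algebra $\mathfrak{A}_{m,r}$ therefore admits a $\qq$-algebra homomorphism $\phi$ onto $R$. Because $\mathfrak{A}_{m,r}$ is central simple over $F$, $\phi$ is either zero or injective, and since it carries $1$ to $1$ it must be an isomorphism. Then $\mathfrak{A}_{m,r} \simeq R \subseteq D$ is a finite-dimensional algebra without zero divisors, hence a division algebra.

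For the converse, given that $\mathfrak{A}_{m,r}$ is a division algebra, one simply exhibits the embedding $G_{m,r} \hookrightarrow \mathfrak{A}_{m,r}^{\ast}$ by $A \mapsto \xi_m$, $B \mapsto u$; the defining relations $A^m = 1$, $B^n = A^t$, $BAB^{-1} = A^r$ are built into the cyclic algebra by construction, so this yields a group homomorphism. Injectivity follows by comparing orders: $\xi_m$ has order $m$, $u$ has order $ns$ (as $u^n = \xi_s$ has order $s$ in the central subfield $F$ of the division algebra), and $\langle \xi_m, u\rangle$ has order $mn = |G_{m,r}|$. The isomorphism statement for the $\qq$-subring is then a restatement of Step 4 of the forward direction.

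The main obstacle is Step 4, i.e.\ verifying that $\phi$ is well defined and that its target exhausts the correct dimension. The subtlety is that a priori the $\qq$-span of $G$ inside $D$ has dimension at most $|G| = mn$, whereas $\dim_{\qq}\mathfrak{A}_{m,r} = n \phi(m)$, which is generally much smaller; the reconciliation is precisely that passing from $\qq[G]$ to its image in $D$ collapses $\qq[\langle A\rangle] \simeq \prod_{d\mid m}\qq(\xi_d)$ onto the single factor $\qq(\xi_m)$. One must verify that this is the only collapsing that occurs, which amounts to Dedekind's independence of the Galois characters $1,\sigma_r,\dots,\sigma_r^{n-1}$ on $\qq(\xi_m)$; this independence is equivalent to the freeness of $\mathfrak{A}_{m,r}$ as a left $\qq(\xi_m)$-module of rank $n$, and is what makes $\phi$ into an isomorphism rather than a mere surjection.
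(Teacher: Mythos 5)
The paper does not actually prove this statement: it is quoted from Amitsur \cite[Theorem~3]{Ami}, and the part of Amitsur's work the paper really relies on is the arithmetic criterion of Theorem~\ref{thmAmi2}, not this equivalence. Judged on its own, your argument is essentially correct and is the standard proof of the equivalence: in the forward direction, $\qq[A]\simeq\qq(\xi_m)$ because a torsion unit of order $m$ in a characteristic-zero division ring generates a cyclotomic field, conjugation by $B$ realizes $\sigma_r$, and the universal property of the crossed product gives a surjection $\phi:\mathfrak{A}_{m,r}\to R$ onto the $\qq$-span of $G$ which is injective because $\mathfrak{A}_{m,r}$ is central simple over $F=\qq(\xi_m)^{\sigma_r}$ (Proposition~\ref{propCyclicAlg}(i), after checking $\xi_s=\xi_m^t\in F$, which you do); a finite-dimensional domain over $\qq$ is then a division algebra, and this also gives the asserted isomorphism with the correspondence $A\leftrightarrow\xi_m$, $B\leftrightarrow\sigma_r$. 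The converse, embedding $G_{m,r}$ via $A\mapsto\xi_m$, $B\mapsto u$ with injectivity coming from the freeness of $\mathfrak{A}_{m,r}$ as a left $\qq(\xi_m)$-module of rank $n$, is also fine.

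Two points you leave implicit. First, both directions quietly use that the presentation defines a group of order exactly $mn$ in which $A$ has order exactly $m$: in the forward direction you need the image of $A$ in $D$ to have order $m$ rather than a proper divisor, and in the converse you compare the image with $|G_{m,r}|=mn$. This is standard, and your own construction supplies it unconditionally: the homomorphism $A\mapsto\xi_m$, $B\mapsto u$ into $\mathcal{U}(\mathfrak{A}_{m,r})$ exists whether or not $\mathfrak{A}_{m,r}$ is a division algebra, and its image consists of the $mn$ distinct elements $\xi_m^i u^j$ by freeness; it would be cleaner to record this once instead of assuming it. Second, the closing appeal to Dedekind independence is not where the work lies: the freeness of $\mathfrak{A}_{m,r}$ over $\qq(\xi_m)$ is built into the crossed-product construction, and independence of characters is only needed inside the proof of the quoted fact that classical crossed products are central simple; granted that fact, your kernel argument already excludes any collapsing beyond $\qq[\langle A\rangle]\twoheadrightarrow\qq(\xi_m)$.
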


We list conditions \cite[(3C), (3D)]{Ami} below for the next theorem where $m, n, s, t$ are defined as in the previous theorem:
\begin{enumerate}
\item[(3C)]
\textup{$\gcd(n, t) = \gcd(s, t) = 1$;}
\item[(3D)]
\textup{$n = 2 n'$, $m = 2^{\alpha} m'$, $s = 2 s'$, where $\alpha \geq 2$, $m'$, $s'$, $n'$ are odd numbers; $\gcd(n, t) = \gcd(s, t) = 2$, and $r \equiv -1 \pmod{2^{\alpha}}$.}
\end{enumerate}
We define the following notations. Let $p$ be a fixed prime dividing $m$. We put: $$\alpha_p = v_p(m), \quad n_p = \ord_{m p^{-\alpha_p}}(r), \quad \delta_p = \ord_{m p^{-\alpha_p}}(p)$$ and $\mu_p$ is the minimal integer satisfying $r^{\mu_p} \equiv p^{\mu'} \pmod{m p^{-\alpha_p}}$ for some integer $\mu'$ where $v_p$ is the $p$-adic valuation, in other words, $v_p(m)$ is the largest nonnegative integer such that $p^{v_p(m)}$ divides $m$. Let $$\delta' = \mu_p \delta_p / n_p.$$

\begin{thm}[{\cite[Theorem 4]{Ami}}]
\label{thmAmi2}
Let $m, n, r, s, t$ be as in \textup{Theorem~\ref{thmAmi}}. The cyclic algebra $\mathfrak{A}_{m,r} = (\qq(\xi_m), \sigma_r, \xi_s)$ is a division algebra if and only if either \textup{(3C)} or \textup{(3D)} holds and one of the following holds\textup{:}
\begin{enumerate}
\item[(1)]
$n = s = 2$ and $r \equiv -1 \pmod{m}$.
\item[(2)]
For every $q \mid n$ there exists a prime $p \mid m$ such that $q \nmid n_p$ and that either 
\begin{enumerate}
\item[(a)]
$p \neq 2$, and $\gcd(q, (p^{\delta'} - 1) / s) = 1$ or,
\item[(b)]
$p = q = 2$, \textup{(3D)} holds, and $m / 4 \equiv \delta' \equiv 1 \pmod{2}$.
\end{enumerate}
\end{enumerate}
\end{thm}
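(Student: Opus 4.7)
My plan is to reduce the question to the classical norm criterion for cyclic algebras. Recall that for a cyclic extension $E/F$ of degree $n$ with Galois group $\langle \sigma \rangle$ and $a \in F^{\ast}$, the cyclic algebra $(E/F,\sigma,a)$ is a division algebra if and only if the image of $a$ in $F^{\ast}/N_{E/F}(E^{\ast})$ has order exactly $n$, equivalently, $a^{n/q}$ is not a norm from $E$ for every prime $q \mid n$. Applying this to $E = \qq(\xi_m)$, $F = E^{\sigma_r}$, $n = \ord_m(r)$ and $a = \xi_s$ converts the problem into a question about which powers of $\xi_s$ occur as norms of cyclotomic units.

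The first step is to unpack conditions (3C) and (3D). These are precisely the hypotheses of Theorem~\ref{thmAmi} that guarantee the group $G_{m,r}$ embeds in $\mathfrak{A}_{m,r}$; algebraically they say that the relation $B^{n} = A^{t}$, read inside the crossed product, is compatible with $\xi_s = \xi_m^{t}$ having order $n$ in the quotient $F^{\ast}/N_{E/F}(E^{\ast})$. Hence without either (3C) or (3D) one can exhibit an explicit zero divisor, coming from a violation of $\gcd(n,t)$ or $\gcd(s,t)$ being as required, already in the $\qq$-span of $G_{m,r}$. This direction I would treat as the routine half: a direct calculation with the defining relations of $\mathfrak{A}_{m,r}$.

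The substantial half is to show, under (3C) or (3D), that the cases (1) and (2) exhaust the situations in which $\xi_s$ is actually of order $n$ modulo norms. The natural tool is the Albert--Brauer--Hasse--Noether theorem: $\xi_s^{n/q} \in N_{E/F}(E^{\ast})$ if and only if it is a local norm at every place of $F$. Fixing a prime $q \mid n$, I would hunt for a rational prime $p \mid m$ and a place of $F$ above $p$ at which $\xi_s^{n/q}$ fails to be a local norm. The completion at such a place is governed by the action of Frobenius on $\qq_p(\xi_{m p^{-\alpha_p}})/\qq_p$, whose arithmetic is encoded precisely by the invariants $n_p$, $\delta_p$, $\mu_p$ and $\delta'$. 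Case (1) records the degenerate possibility $n=2$, where the only obstruction comes from the real place and forces $r \equiv -1 \pmod m$; case 2(a) treats odd $p$ via a Hilbert symbol computation reducing to the cyclic group $(\zz/p^{\alpha_p}\zz)^{\ast}$; case 2(b) treats $p = q = 2$ and needs the extra parity input supplied by (3D).

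The main obstacle I anticipate is the bookkeeping at $p = 2$. The local extension $\qq_2(\xi_{2^{\alpha}})/\qq_2$ is not cyclic for $\alpha \geq 3$, so the local Hasse symbol does not simplify as cleanly as at odd primes, and the archimedean place interacts with the wild ramification at $2$. Extracting the exact numerical condition $m/4 \equiv \delta' \equiv 1 \pmod{2}$ from the local norm symbol is, in my view, the technical core where condition (3D) earns its keep. Once this $2$-adic calculation is in hand, patching the local obstructions prime by prime and collecting cases yields the statement.
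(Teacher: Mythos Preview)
The paper does not prove this statement. Theorem~\ref{thmAmi2} is quoted verbatim from Amitsur's paper \cite{Ami} as \cite[Theorem~4]{Ami} and is used as a black box in Section~\ref{sec12-4}; no proof or sketch is supplied. So there is nothing in the paper to compare your proposal against.

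That said, a brief comment on your sketch. The overall strategy --- reduce to the norm criterion for cyclic algebras and then apply the Hasse norm theorem prime by prime --- is indeed the route Amitsur follows, and your identification of the $2$-adic bookkeeping as the delicate point is accurate. One correction: you write that conditions (3C) and (3D) ``are precisely the hypotheses of Theorem~\ref{thmAmi} that guarantee the group $G_{m,r}$ embeds in $\mathfrak{A}_{m,r}$''. This is not how the paper (or Amitsur) sets things up. Theorem~\ref{thmAmi} asserts unconditionally that $G_{m,r}$ embeds in a division ring if and only if $\mathfrak{A}_{m,r}$ is a division algebra; conditions (3C)/(3D) enter only in Theorem~\ref{thmAmi2} as part of the numerical characterisation of when $\mathfrak{A}_{m,r}$ is a division algebra. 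In Amitsur's argument they arise not from an embedding obstruction but from the requirement that $\xi_s$ have exact order $n$ in $F^{\ast}$ (before passing to the norm quotient), so your first step mis-locates their role. Beyond that, your outline is a plan rather than a proof: the actual local computations that produce the conditions $\gcd(q,(p^{\delta'}-1)/s)=1$ and $m/4 \equiv \delta' \equiv 1 \pmod 2$ are not carried out, and those are where all the content lies.
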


To use Amitsur's result, we have to rewrite the presentation of $G_k$ to fit all notations in Theorem~\ref{thmAmi}. Since $\gcd(p, q^{k-k_0}) = 1$, choose $p'$ such that $p p' \equiv 1 \pmod{q^{k-k_0}}$. Let $$A = x y^{p' q^{k_0}}, \quad B = y, \quad m = p q^{k-k_0} \quad \text{and} \quad n = q^{k_0}.$$ Then $|A| = m$. By Chinese Remainder Theorem, let $r \in \zz$ be such that $$r \equiv r_0 \pmod{p} \quad \text{and} \quad r \equiv 1 \pmod{q^{k-k_0}}.$$ We have $$B A B^{-1} = y x y^{p' q^{k_0}} y^{-1} = x^{r_0} y^{p' q^{k_0}} = x^r y^{p' q^{k_0} r} = A^r.$$ Here, $\gcd(m, r) = 1$ since $p \nmid r_0$. Let $$s = \gcd(r-1, m), \quad t = m / s \quad \text{and} \quad n = \ord_m(r).$$ We compute these three numbers. Since $p \nmid r_0-1$, we have $p \nmid r-1$. Moreover, $q^{k-k_0} \mid r-1$. We obtain $$s = q^{k-k_0} \quad \text{and} \quad t = p.$$ Note that $r^{q^{k_0}} \equiv r_0^{q^{k_0}} \equiv 1 \pmod{p}$ since $y^{q^{k_0}}$ fixes $x$. Thus, $r^{q^{k_0}} \equiv 1 \pmod{m}$ and $n \mid q^{k_0}$. In other words, $n$ is a power of $q$. On the other hand, $r_0^n \equiv r^n \equiv 1 \pmod{p}$. It follows that $y^{n}$ fixes $x$. By the definition of $k_0$, we have $$n = q^{k_0}.$$ Therefore, $$B^n = y^{q^{k_0}} = A^p = A^t.$$ Now, if $H$ is a group generated by the above generators $A, B$ and adding the stated relations, then $|H| = m n = p q^k$. There is an epimorphism from $G_k$ into $H$ since $x$ and $y$ can be rewritten by $A$ and $B$. It follows that $G_k \simeq H$. In other words, $$G_k = \langle A, B \mid A^m = 1, B^n = A^t, B A B^{-1} = A^r \rangle.$$ In particular, $$G_k' = \langle A^s \rangle \quad \text{and} \quad \mathcal{Z}(G_k) = \langle A^t \rangle.$$ Observe that $\gcd(n, t) = \gcd(s, t) = 1$. So $G_k$ satisfies the condition (3C). 

Recall that $e_k = e(G_k,M,1)$ where $M = \langle x, y^{q^{k_0}} \rangle = \langle x y^{q^{k_0}} \rangle$ is the maximal abelian subgroup of $G_k$ containing $G_k'$. 

\begin{lem}
\label{lemIsomDiv}
$\qq[G_k] e_k \simeq  (\qq(\xi_m), \sigma_r, \xi_s) = \mathfrak{A}_{m,r}$ for $k \geq 2$.
\end{lem}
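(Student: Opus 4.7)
My plan is to apply Proposition~\ref{propSShodaPair} directly to the pair $(M,1)$ and then identify the resulting crossed product with the cyclic algebra $\mathfrak{A}_{m,r}$ by matching the action and the twisting.

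First, I would verify that $(M,1)$ is a strong Shoda pair of $G_k$. Since $M\supseteq G_k'$ and $G_k/M$ is cyclic, $M$ is normal in $G_k$, so $N_{G_k}(1)=G_k$ and $M\trianglelefteq N_{G_k}(1)$. The group $M=M/1$ is cyclic of order $m$ and, by construction, maximal abelian in $G_k$. The orthogonality condition on conjugates of $\varepsilon(M,1)$ is vacuous because $G_k\setminus N_{G_k}(1)=\varnothing$, so $(M,1)$ is indeed a strong Shoda pair and $e_k=e(G_k,M,1)$.

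Next, Proposition~\ref{propSShodaPair} with $h=[M:1]=m$, $N=G_k$ and $[G_k:N]=1$ yields
$$\qq[G_k]\,e_k\simeq \qq(\xi_m)*(G_k/M),$$
with $G_k/M$ cyclic of order $n=q^{k_0}$ generated by $\bar{y}$. I would then make the action and twisting explicit by choosing $A=xy^{p'q^{k_0}}$ as generator of $H/K=M$. A direct conjugation computation using $yxy^{-1}=x^{r_0}$ gives $yAy^{-1}=x^{r_0}y^{p'q^{k_0}}$, and this equals $A^r$ since $r\equiv r_0\pmod{p}$ and $r\equiv 1\pmod{q^{k-k_0}}$. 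Hence the action of $\bar{y}$ on $\qq(\xi_m)$ is exactly $\sigma_r$.

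For the twisting I take the transversal $\{y^i : 0\le i<q^{k_0}\}$ of $M$ in $G_k$ as lifts of the elements of $G_k/M$. The crucial identity is
$$A^p=x^p\,y^{pp'q^{k_0}}=y^{q^{k_0}},$$
since $pp'\equiv 1\pmod{q^{k-k_0}}$ and $x^p=1$. Consequently, for $0\le i,j<q^{k_0}$, the product $y^iy^j$ equals $(y^{i+j})$ when $i+j<q^{k_0}$ and equals $A^p\cdot y^{i+j-q^{k_0}}=A^t\cdot(y^iy^j)'$ when $i+j\ge q^{k_0}$. By the recipe of Proposition~\ref{propSShodaPair}, this translates into $f(\bar y^i,\bar y^j)=1$ in the first case and $f(\bar y^i,\bar y^j)=\xi_m^t$ in the second. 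Since $\xi_m^t=\xi_{m/t}=\xi_s$ and $\xi_s$ lies in the fixed field $\qq(\xi_m)^{\sigma_r}$ (because $\sigma_r(\xi_m^t)=\xi_m^{tr}=\xi_m^t$ as $r\equiv 1\pmod s$), this matches precisely the defining twisting of $(\qq(\xi_m),\sigma_r,\xi_s)$. Therefore $\qq[G_k]e_k\simeq\mathfrak{A}_{m,r}$.

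The main obstacle is purely bookkeeping: one must carefully translate between the original presentation in $x,y$ and the Amitsur-style presentation in $A,B$, and track the induced action and twisting through the isomorphism of Proposition~\ref{propSShodaPair}. The key identity $A^p=y^{q^{k_0}}$ is what both lifts the defining relation $B^n=A^t$ and pins down the element $\xi_m^t=\xi_s$ that appears in the twisting, so once this equation is in hand the identification of the crossed product with the cyclic algebra is automatic.
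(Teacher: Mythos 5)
Your proposal is correct and follows essentially the same route as the paper: apply Proposition~\ref{propSShodaPair} to the strong Shoda pair $(M,1)$ with $M=\langle A\rangle$, read off the action as $\sigma_r$ from $A^y=A^r$, and read off the twisting as $\xi_m^{t}=\xi_m^{p}=\xi_s$ from the key identity $y^{q^{k_0}}=A^p=A^t$ using the transversal $\{y^i\}$. The only difference is that you re-derive within the proof the relations $A^y=A^r$ and $A^p=y^{q^{k_0}}$ (and note $\xi_s$ is $\sigma_r$-fixed), which the paper had already established in the setup preceding the lemma.
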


\begin{proof}
We follow all notations as above. First of all, we note that $M = \langle A \rangle$ since $|A| = m = |M|$. Moreover, $A^y = A^B = A^r$. Observe that $|G_k/M| = q^{k_0} = n$. For $0 \leq i < n$, we fix preimages of $\overline{y}^i \in G_k/M$ by $y^i$ under the canonical homomorphism $G \to G/M$. For $0 \leq i, j < n$, we have $y^i y^j = y^n y^{i+j-n}$ where $0 \leq i+j - n < n$. Moreover, $y^n = B^n = A^t = A^p$. By Proposition~\ref{propSShodaPair}, we obtain that $\qq[G_k]e_k \simeq (\qq(\xi_m), \sigma_r, \xi_m^{p})$. Now, $\xi_m^{p} = \xi_s$ and the result follows.
\end{proof}

As we mentioned, our group $G_k$ satisfies (3C). We check conditions in Theorem~\ref{thmAmi2}(1). If $n = s = 2$, then $q = 2$, $k_0 = 1$ and $k = 2$. Furthermore, $m = 2 p$ and $\ord_p(r_0) = 2$. Thus, we can assume $r_0 = p-1$ and $r \equiv r_0 \equiv -1 \pmod{p}$. It follows that $r \equiv -1 \pmod{m}$ since $q^{k-k_0} = 2$ and $r \equiv 1 \equiv -1 \pmod{q^{k-k_0}}$. By Theorem~\ref{thmAmi2}, $\qq[G_k] e_k = \qq[G_2] e_2$ is a division ring. In this case, $$G_2 \simeq Q_{4p} = \langle x, y \mid x^p = y^4 = 1, \, y x y^{-1} = x^{-1} \rangle,$$ the {\it generalized quaternion group} of order $4p$ for an odd prime $p$.

For the case (2) in Theorem~\ref{thmAmi2}, we observe that $n = q^{k_0}$, $m = p q^{k - k_0}$. In particular, $m$ has two prime divisors $p$ and $q$. For $p$, we have $\alpha_p = v_p(m) = 1$ and $n_p = \ord_{m p^{-\alpha_p}}(r) = \ord_{q^{k-k_0}}(r) = 1$; for $q$, we have $\alpha_q = v_q(m) = k - k_0$ and $n_q = \ord_{m q^{-\alpha_q}} (r) = \ord_p(r) = q^{k_0}$. Note that $p > 2$ since $G_k$ is nonabelian. Thus, the only possibility is the case (2)(a) since $p \neq q$. Hence, we only have to check when $$\gcd(q, (p^{\delta'} - 1) / s) = 1$$ happens assuming we are not in the situation $n = s = 2$.

We compute $\delta'$. Since $\alpha_p = 1$ and $r \equiv 1 \pmod{q^{k-k_0}}$, we have $\mu_p = 1$. Moreover, $n_p = 1$. Then we get $$\delta' = \delta_p = \ord_{q^{k-k_0}}(p).$$ Recall that $v_q$ denotes the $q$-adic valuation. It follows that $\gcd(q, (p^{\delta'} - 1) / s) = 1$ if and only if $v_q((p^{\delta'} - 1) / s) = 0$ if and only if $$v_q(p^{\ord_{q^{k-k_0}}(p)} - 1) = v_q(s) = k - k_0.$$ Hence, we have the following

\begin{lem}
\label{lemCriterionDiv}
Assume that $n = s = 2$ does not hold. Then $\qq[G_k] e_k$ is a division ring if and only if $v_q(p^{\ord_{q^{k-k_0}}(p)} - 1) = k - k_0$.
\end{lem}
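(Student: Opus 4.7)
The plan is to apply Amitsur's Theorem~\ref{thmAmi2} to the cyclic algebra $\mathfrak{A}_{m,r}\simeq\qq[G_k]e_k$ supplied by Lemma~\ref{lemIsomDiv}, using the presentation of $G_k$ in terms of $A,B$ and the values $m=pq^{k-k_0}$, $n=q^{k_0}$, $s=q^{k-k_0}$, $t=p$, $r\equiv r_0\pmod p$, $r\equiv 1\pmod{q^{k-k_0}}$ that were established just before the statement.

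First I would verify the standing hypothesis of Theorem~\ref{thmAmi2}. Condition (3C) holds because $\gcd(n,t)=\gcd(q^{k_0},p)=1$ and $\gcd(s,t)=\gcd(q^{k-k_0},p)=1$. Since the lemma excludes $n=s=2$, condition (1) of Theorem~\ref{thmAmi2} is ruled out, and the division algebra property is equivalent to condition (2). Note also that $p$ is odd: otherwise $|P|=2$, $\mathrm{Aut}(P)=1$, and $Q$ could only act trivially on $P$, contradicting the hypothesis that $G_k$ is the nonabelian group of Theorem~\ref{thmSSN}(ii).

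Next I would check (2) prime by prime. The only prime dividing $n=q^{k_0}$ is $q$, so we need a prime divisor $p'$ of $m=pq^{k-k_0}$ satisfying $q\nmid n_{p'}$. For $p'=q$ one has $n_q=\ord_{p}(r)=\ord_p(r_0)=q^{k_0}$, which is divisible by $q$, so $p'=q$ fails. For $p'=p$, $\alpha_p=v_p(m)=1$ and $n_p=\ord_{q^{k-k_0}}(r)=1$ because $r\equiv 1\pmod{q^{k-k_0}}$; hence $q\nmid n_p$. As $p$ is odd we are in case (2)(a) (and not (2)(b)), and the criterion reduces to
\[
\gcd\!\left(q,\ \tfrac{p^{\delta'}-1}{s}\right)=1.
\]

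It remains to identify $\delta'=\mu_p\delta_p/n_p$. Since $r\equiv 1\pmod{q^{k-k_0}}$ we have $r^{1}\equiv 1=p^{0}\pmod{q^{k-k_0}}$, so $\mu_p=1$ with $\mu'=0$; moreover $n_p=1$ and $\delta_p=\ord_{mp^{-\alpha_p}}(p)=\ord_{q^{k-k_0}}(p)$. Therefore $\delta'=\ord_{q^{k-k_0}}(p)$. By definition of $\delta'$ we have $q^{k-k_0}\mid p^{\delta'}-1$, i.e. $v_q(p^{\delta'}-1)\geq k-k_0=v_q(s)$, so $(p^{\delta'}-1)/s$ is an integer and
\[
\gcd\!\left(q,\ \tfrac{p^{\delta'}-1}{s}\right)=1\ \Longleftrightarrow\ v_q\!\left(\tfrac{p^{\delta'}-1}{s}\right)=0\ \Longleftrightarrow\ v_q(p^{\delta'}-1)=k-k_0,
\]
which is exactly the claimed criterion. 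The main technical step is the correct evaluation of $\mu_p$; once that is in hand, the rest is a direct bookkeeping of Amitsur's numerical invariants, so I do not anticipate a serious obstacle beyond carefully confirming that our parameters fall into branch (2)(a) rather than (2)(b).
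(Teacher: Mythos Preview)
Your proposal is correct and follows essentially the same route as the paper: the paper's ``proof'' is precisely the paragraph of computations immediately preceding the lemma, where the values $n_p=1$, $n_q=q^{k_0}$, $\mu_p=1$, $\delta'=\delta_p=\ord_{q^{k-k_0}}(p)$ are worked out and plugged into Amitsur's criterion~(2)(a). Your justifications for $p>2$ and for discarding branch~(2)(b) differ cosmetically (you use $\mathfrak{p}=p\neq 2$, the paper uses $\mathfrak{p}=p\neq q=\mathfrak{q}$), but the argument is the same.
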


\begin{lem}
\label{lemDecNilp9}
Assume that $\qq[G_k]$ has only one matrix component for $k \geq 2$. Then $y^q$ fixes $x$ and the following holds.
\begin{enumerate}
\item[(i)]
If $q \neq 2$, then $v_q(p-1) = 1$.
\item[(ii)]
If $q = 2$, then either $k = 2$ and $G_2 \simeq Q_{4p}$, or $k > 2$ and $p \equiv 5 \pmod{8}$.
\end{enumerate}
\end{lem}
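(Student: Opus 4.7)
The plan is to combine the Wedderburn decomposition
$\qq[G_k]\simeq\qq[G_k]\widetilde{G_k'}\oplus\bigoplus_{j=k_0}^{k-1}\qq[G_j]\overline{e}_j\oplus\qq[G_k]e_k$
established in the preamble with Lemma~\ref{lemIsomDiv}, which identifies every non-commutative factor as the cyclic algebra $\mathfrak{A}_{m_j,r_j}$ associated to $G_j$ (with the same $k_0$ but $m_j=pq^{j-k_0}$, $s_j=q^{j-k_0}$, $n_j=q^{k_0}$). By Lemma~\ref{lemDecNilp8} the factor at $j=k_0$ is a matrix component of degree $q^{k_0}$, so the hypothesis forces $\mathfrak{A}_{m_j,r_j}$ to be a division algebra for every $k_0<j\leq k$. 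Lemma~\ref{lemCriterionDiv} then packages this requirement as the arithmetic condition $v_q(p^{\ord_{q^e}(p)}-1)=e$ for $e=j-k_0\in\{1,\dots,k-k_0\}$, except in the sporadic case $n_j=s_j=2$ (which forces $q=2$, $k_0=1$, $j=2$, realizing $G_2\simeq Q_{4p}$).

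Assume first that $q$ is odd, so the sporadic case is unavailable. Because $\ord_p(r_0)=q^{k_0}$ divides $p-1$ we have $p\equiv 1\pmod q$, hence $\ord_q(p)=1$; the odd-prime lifting-the-exponent lemma then gives $v_q(p^{\ord_{q^e}(p)}-1)=\max(e,v_q(p-1))$. The criterion thus becomes $e\geq v_q(p-1)$, and applying it at $e=1$ yields $v_q(p-1)\leq 1$. Combined with $v_q(p-1)\geq k_0\geq 1$ this forces $v_q(p-1)=1$ and $k_0=1$, which proves (i) and shows that $y^q$ fixes $x$.

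For $q=2$ I proceed in stages. First, $k_0=1$ is forced: if $k_0\geq 2$ then at $j=k_0+1$ we have $s=2$ but $n=2^{k_0}\geq 4$, so Theorem~\ref{thmAmi2}(1) is unavailable, condition (3D) fails because $\gcd(n_j,t_j)=\gcd(2^{k_0},p)=1\neq 2$, and (2)(a) reduces to $v_2(p-1)=1$, contradicting $v_2(p-1)\geq k_0\geq 2$. With $k_0=1$, the factor at $j=2$ is automatically a division algebra via Theorem~\ref{thmAmi2}(1), since $r\equiv r_0\equiv -1\pmod p$ and $r\equiv 1\pmod 2$ give $r\equiv -1\pmod{2p}$, whence $G_2\simeq Q_{4p}$. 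If $k=2$ we are done; if $k>2$, (2)(a) must hold at every $j\in\{3,\dots,k\}$. When $p\equiv 3\pmod 4$ one computes $\ord_4(p)=2$ and $v_2(p^2-1)=v_2(p+1)+1\geq 3$, making the $e=2$ criterion impossible; hence $p\equiv 1\pmod 4$, and the $e=2$ criterion then reads $v_2(p-1)=2$, i.e.\ $p\equiv 5\pmod 8$. A final LTE check shows that once $p\equiv 5\pmod 8$ the criterion holds for every $e\geq 2$, giving (ii). The main obstacle is the $q=2$, $p\equiv 3\pmod 4$ branch---where $\ord_{2^e}(p)$ jumps to $2$ rather than $1$ and the correct variant of LTE must be tracked---together with verifying at each $j$ which of the clauses (1), (2)(a), (2)(b) in Theorem~\ref{thmAmi2} actually applies.
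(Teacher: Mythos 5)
Your proposal is correct and follows essentially the same route as the paper: it uses the decomposition into the components $\qq[G_j]\overline{e}_j$, the division criterion of Lemma~\ref{lemCriterionDiv} (equivalently Theorem~\ref{thmAmi2}) applied at $j=k_0+1$ and $j=k_0+2$, and the bound $v_q(p-1)\geq k_0$ coming from $\ord_p(r_0)=q^{k_0}$. The only differences are cosmetic: you organize the valuation computations via the lifting-the-exponent identity $v_q\bigl(p^{\ord_{q^e}(p)}-1\bigr)=\max\bigl(e,v_q(p-1)\bigr)$ where the paper evaluates the two relevant cases directly, and your closing check that the criterion holds for all $e\geq 2$ when $p\equiv 5\pmod 8$ belongs to the converse direction (Proposition~\ref{propDecNilp2}) and is not needed here.
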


\begin{proof}
Note that $\ord_p(r_0) = q^{k_0}$ and $r_0^{p-1} \equiv 1 \pmod{p}$. Thus, $q^{k_0} \mid p-1$. In other words, $v_q(p-1) \geq k_0$. By assumption, $\qq[G_j] e_j$ are division rings for $k_0+1 \leq j \leq k$. We first assume $q \neq 2$. Then, by Lemma~\ref{lemCriterionDiv}, we have $v_q(p^{\delta'_j} - 1) = j - k_0$ where $\delta'_j = \ord_{q^{j-k_0}}(p)$ for each $j$. When $j = k_0 + 1$, we obtain $\delta'_j = 1$ and $v_q(p - 1) = 1 \geq k_0$. In particular, $k_0 = 1$ and $y^q$ fixes $x$.

Assume $q = 2$. Recall that $n = 2^{k_0}$ and $s = 2^{j-k_0}$ for the group $G_j$. When $j = k_0+1$, we obtain $s = 2$. If $k_0 > 1$, then $n \neq s = 2$ and we have $v_2(p^{\delta'_j} - 1) = j - k_0 = 1$ by Lemma~\ref{lemCriterionDiv}. However, we also have $\delta'_j = 1$ and $v_2(p^{\delta'_j} - 1) = v_2(p-1) \geq k_0 > 1$, a contradiction. Thus, $k_0 = 1$ and $n = 2$. In this case, $y^{q}$ fixes $x$.

If $k = 2$, then we can obtain that $G_2 \simeq Q_{4p}$. Assume $k \geq 3$. When $j = k_0 + 2$, we have $s = 2^2 \neq n = 2$. So $v_2(p^{\delta'_j} - 1) = j - k_0 = 2$ by Lemma~\ref{lemCriterionDiv} again. On the other hand, $\delta'_j = \ord_{2^2}(p)$ which divides $2$. If $\delta'_j = 2$, we have $p \equiv 3 \pmod{4}$. Then $p^2 \equiv 1 \pmod{8}$ and $v_2(p^{\delta'_j} - 1) = v_2(p^2 - 1) \geq 3$, a contradiction. Thus, $\delta'_j = 1$ and $v_2(p-1) = 2$. It follows that $p \equiv 5 \pmod{8}$. 
\end{proof}

We are going to prove that the converse of Lemma~\ref{lemDecNilp9} is also true. Before we prove it, we give an elementary computation.

\begin{lem}
\label{lempadic}
Let $a, b, c \in \nn$ be such that $c = 1 + q^a b$ and $q \nmid b$, and $a > 1$ if $q = 2$. Then $\ord_{q^{a+d}}(c) = q^d$ and $v_q(c^{q^d} - 1) = a + d$ for every $d \in \zz_{\geq 0}$. 
\end{lem}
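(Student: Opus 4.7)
The plan is to establish the valuation identity $v_q(c^{q^d}-1)=a+d$ first, and then derive the order statement as an immediate consequence. This is essentially the "lifting the exponent" (LTE) lemma in disguise, adapted to the two-prime-case distinction that the hypothesis makes ($q$ odd vs.\ $q=2$ with $a\geq 2$).

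First I would do induction on $d\geq 0$. The base case $d=0$ is trivial: $c-1=q^a b$ with $q\nmid b$, so $v_q(c-1)=a$. For the inductive step, assume $c^{q^d}-1=q^{a+d}e$ with $q\nmid e$, and expand
\begin{equation*}
c^{q^{d+1}}-1=(1+q^{a+d}e)^q-1=\sum_{k=1}^{q}\binom{q}{k}(q^{a+d}e)^{k}.
\end{equation*}
The $k=1$ term is $q\cdot q^{a+d}e=q^{a+d+1}e$, of valuation exactly $a+d+1$. The point is to verify that every other term has strictly larger $q$-valuation. For $q$ odd and $2\leq k\leq q-1$, the factor $\binom{q}{k}$ contributes one factor of $q$, while $(q^{a+d}e)^k$ contributes $k(a+d)$, giving total valuation at least $1+2(a+d)\geq a+d+2$ since $a\geq 1$; the terminal term $k=q$ contributes $q(a+d)\geq 2(a+d)\geq a+d+2$. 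For $q=2$ only the $k=2$ term $(2^{a+d}e)^2=2^{2(a+d)}e^2$ remains, with valuation $2(a+d)$, and $2(a+d)\geq a+d+2$ holds precisely because the hypothesis forces $a\geq 2$ when $q=2$. In both cases the $k=1$ term dominates, yielding $v_q(c^{q^{d+1}}-1)=a+d+1$, completing the induction.

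Given the valuation formula, the order statement follows at once. From $v_q(c^{q^d}-1)=a+d$ one has $q^{a+d}\mid c^{q^d}-1$, so $\operatorname{ord}_{q^{a+d}}(c)$ divides $q^d$ and equals $q^j$ for some $0\leq j\leq d$. If $j<d$, then $c^{q^j}\equiv 1\pmod{q^{a+d}}$ would give $v_q(c^{q^j}-1)\geq a+d$, contradicting the already established equality $v_q(c^{q^j}-1)=a+j<a+d$. Hence $\operatorname{ord}_{q^{a+d}}(c)=q^d$.

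The only subtle point — and the reason the hypothesis $a\geq 2$ for $q=2$ is imposed — is precisely the binomial expansion step at $q=2$, where one really needs $2(a+d)\geq a+d+2$ to conclude that the quadratic term is absorbed. I expect no other obstacle; the rest is bookkeeping of $q$-adic valuations.
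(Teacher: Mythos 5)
Your proof is correct and follows essentially the same route as the paper's: binomial expansion of $(1+q^{a+d}e)^q$, with the hypotheses ($q\nmid b$, and $a\geq 2$ when $q=2$) guaranteeing that the linear term dominates, iterated/inducted over $d$, and then the order statement deduced from the valuation formula exactly as in the paper. One tiny slip to repair: for odd $q$ your chain $q(a+d)\geq 2(a+d)\geq a+d+2$ fails when $a+d=1$; what you need (and what is true) is $q(a+d)\geq 3(a+d)\geq a+d+2$, using $q\geq 3$ and $a+d\geq 1$.
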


\begin{proof}
Computing $c^q$, we have $c^q =  (1 + q^a b)^q = \sum_{i=0}^q \binom{q}{i} (q^a b)^i$. Note that $v_q(\binom{q}{i}) = 1$ for $i \neq 0, q$. Because of the assumptions that $q \nmid b$ and $a > 1$ if $q = 2$, it is easily seen that $c^q = 1 + q^{a+1} b_1$ for some $b_1 \in \nn$ and $q \nmid b_1$. Continuing this process, we  obtain $c^{q^d} = 1 + q^{a+d} b_d$ for some $b_d \in \nn $ and $q \nmid b_d$, for every $d \in \zz_{\geq 0}$ where $b_0 = b$. Thus, $v_q(c^{q^d} - 1) = a+d$. Moreover, $\ord_{q^{a+d}}(c) \mid q^d$. If $d = 0$, then $\ord_{q^{a+0}}(c) = 1$. Assume $d > 0$. Note that $c^{q^{d-1}} = 1 + q^{a+d-1} b_{d-1} \not\equiv 1 \pmod{q^{a+d}}$ since $q \nmid b_{d-1}$. It follows that $\ord_{q^{a+d}}(c) = q^d$, as desired. 
\end{proof}

\begin{prop}
\label{propDecNilp2}
Let $k \geq 2$. Then $\qq[G_k]$ has only one matrix component if and only if $y^q$ fixes $x$ and the following holds.
\begin{enumerate}
\item[(i)]
If $q \neq 2$, then $v_q(p-1) = 1$.
\item[(ii)]
If $q = 2$, then either $k = 2$ and $G_2 \simeq Q_{4p}$, or $k > 2$ and $p \equiv 5 \pmod{8}$.
\end{enumerate}
In particular, the matrix component is isomorphic to $M_{q}(\qq(\xi_p)^{\sigma})$ for $\sigma \in \Aut(\qq(\xi_p))$ given by $\sigma(\xi_p) = \xi_p^{r_0}$.
\end{prop}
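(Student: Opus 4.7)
\medskip

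My plan is to combine the necessary condition (Lemma~\ref{lemDecNilp9}) with a direct verification of sufficiency using the Wedderburn reduction already set up in this subsection together with Amitsur's criterion (Theorem~\ref{thmAmi2}) and the $p$-adic order computation of Lemma~\ref{lempadic}.

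The "only if" direction is exactly Lemma~\ref{lemDecNilp9}, so I only have to handle the "if" direction. Assume therefore that $k_0=1$ (i.e.\ $y^{q}$ fixes $x$) and that (i) or (ii) holds. Recall the decomposition
$$\qq[G_k]\simeq\qq[G_k]\widetilde{G_k'}\;\oplus\;\bigoplus_{j=1}^{k-1}\qq[G_j]\overline{e}_j\;\oplus\;\qq[G_k]e_k,$$
in which the first summand is commutative and $\qq[G_1]\overline{e}_1\simeq M_q(\qq(\xi_p)^{\sigma})$, with $\sigma(\xi_p)=\xi_p^{r_0}$, by Lemma~\ref{lemDecNilp8} (since $G_1=G_{k_0}$ is a group of the form in Theorem~\ref{thmSSN}(i) with cyclic $P$). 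Thus to show that $\qq[G_k]$ has exactly one matrix component (and simultaneously to identify it) it is enough to prove that $\qq[G_j]\overline{e}_j$ for $2\le j<k$, as well as $\qq[G_k]e_k$, are all division rings. By the isomorphisms $\qq[G_k]\overline{e}_j\simeq\qq[G_j]e_j$ established in the reduction paragraph preceding Lemma~\ref{lemIsomDiv}, this reduces further to proving that $\qq[G_j]e_j$ is a division ring for every $2\le j\le k$.

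Now I apply Amitsur's criterion to each such $G_j$. For $G_j$ with $k_0=1$, the parameters of Theorem~\ref{thmAmi} become $m=pq^{j-1}$, $n=q$, $s=q^{j-1}$, $t=p$, and $r\equiv r_0\pmod p$, $r\equiv1\pmod{q^{j-1}}$; condition (3C) is automatic. I split into the two cases of the proposition.

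\emph{Case $q\neq 2$, $v_q(p-1)=1$.} Here $n=s=2$ never occurs, so by Lemma~\ref{lemCriterionDiv} I must show $v_q(p^{\ord_{q^{j-1}}(p)}-1)=j-1$ for every $2\le j\le k$. Since $v_q(p-1)=1$, write $p=1+qb$ with $q\nmid b$; then Lemma~\ref{lempadic} (with $a=1$, $d=j-2$) gives $\ord_{q^{j-1}}(p)=q^{j-2}$ and $v_q(p^{q^{j-2}}-1)=j-1$, as required.

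\emph{Case $q=2$.} If $k=2$ and $G_2\simeq Q_{4p}$, the only non-division, non-commutative candidate is $\qq[G_2]e_2$, and here $n=s=2$ with $r\equiv -1\pmod m$, so Theorem~\ref{thmAmi2}(1) forces $\qq[G_2]e_2$ to be a division algebra. Suppose now $k>2$ and $p\equiv5\pmod 8$, so $v_2(p-1)=2$. For $j=2$ the same $n=s=2$ argument gives that $\qq[G_2]e_2$ is a division ring. For $3\le j\le k$ one has $s=2^{j-1}>2$, so Lemma~\ref{lemCriterionDiv} applies and I need $v_2(p^{\ord_{2^{j-1}}(p)}-1)=j-1$; writing $p=1+4b$ with $b$ odd, Lemma~\ref{lempadic} (with $q=2$, $a=2$, $d=j-3$) yields $\ord_{2^{j-1}}(p)=2^{j-3}$ and $v_2(p^{2^{j-3}}-1)=j-1$, finishing the verification.

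The only potential obstacle is checking, in the $q=2$, $k>2$ branch, that the two instances $j=2$ (handled by Theorem~\ref{thmAmi2}(1)) and $j\ge 3$ (handled by Theorem~\ref{thmAmi2}(2)(a) via Lemma~\ref{lemCriterionDiv}) dovetail correctly at the transition between the conditions (3C) with $n=s=2$ and (3C) with $s>2$; but the explicit congruences on $r$ dictated by $k_0=1$ and $r\equiv 1\pmod{q^{j-1}}$ make both cases routine. The final statement about the matrix component being $M_q(\qq(\xi_p)^{\sigma})$ is then simply the identification of $\qq[G_1]\overline{e}_1$ already recorded above.
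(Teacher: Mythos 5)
Your proof is correct and takes essentially the same route as the paper: necessity via Lemma~\ref{lemDecNilp9}, sufficiency by reducing to the components $\qq[G_j]e_j$ for $2\le j\le k$ and showing they are division rings with Lemma~\ref{lemCriterionDiv} together with the $p$-adic computation of Lemma~\ref{lempadic}, and identification of the matrix component through Lemma~\ref{lemDecNilp8}. The only cosmetic differences are that you handle the $n=s=2$ components via Theorem~\ref{thmAmi2}(1) (the paper cites the known Wedderburn decomposition of $\qq[Q_{4p}]$ for $k=2$ and relies on its earlier $n=s=2$ discussion), and you make the $j=2$ step in the $q=2$, $k>2$ branch explicit, which the paper leaves implicit.
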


\begin{proof}
One direction is actually Lemma~\ref{lemDecNilp9} and we prove the opposite direction. By assumption, $k_0 = 1$. Let $q \neq 2$ and $v_q(p-1) = 1$. Then $p = 1 + q b$ for some $b \in \nn$ and $q \nmid b$. We can conclude that $\delta'_j = \ord_{q^{j - k_0}}(p) = q^{j - k_0 - 1}$ and $v_q(p^{\delta'_j} - 1) = j - k_0$ for $k_0 + 1 \leq j \leq k$ by Lemma~\ref{lempadic}. Thus, by Lemma~\ref{lemCriterionDiv}, $\qq[G_j] e_j$ is a division ring for $k_0 + 1 \leq j \leq k$ and $\qq[G_k]$ has only one matrix component.

Assume $q = 2$. If $k = 2$, then $G_2 \simeq Q_{4 p}$. In this case, $\qq[Q_{4p}]$ has only one matrix component isomorphic to $M_2(\qq(\xi_p + \xi_p^{-1}))$ by \cite[Example~3.5.7]{JesRio}. Assume $k \geq 3$ and $p \equiv 5 \pmod{8}$. Then $p = 1 + 2^2 b$ for some odd integer $b$. Thus, we also have that $\delta'_j = \ord_{2^{j - k_0}}(p) = 2^{j - k_0 - 2}$ and $v_2(p^{\delta'_j} - 1) = j - k_0$ for $k_0 + 2 \leq j \leq k$ by Lemma~\ref{lempadic}. So $\qq[G_j] e_j$ is a division ring for every $k_0 + 2 = 3 \leq j \leq k$ by Lemma~\ref{lemCriterionDiv}. 

Finally, since $k_0 = 1$ and $G_1$ is the group in Theorem~\ref{thmSSN}, the matrix component comes from $\qq[G_1] e_1$ which is isomorphic to the matrix ring described in Lemma~\ref{lemDecNilp8}. We complete the proof.
\end{proof}

\begin{remk}
\label{remkSSNiiND}
Let $G = C_p \rtimes C_{q^k}$ be  a group as  in Theorem~\ref{thmSSN}(ii) with distinct primes $p, q$ and $k \geq 2$. If $p \equiv 3 \pmod{4}$, $q = 2$ and $k \geq 3$, then Hales and Passi proved that $G$ does not have ND by finding a tricky nilpotent element in \cite[Theorem 4.1]{HalPas2}. However, we still do not know in general whether $G$ has ND or not if $\qq[G]$ has more than one matrix component.
\end{remk}

Recall that if $\qq[G_k] e_k$ is a division ring , then $G_k$ is embedded in $\mathfrak{A}_{m,r}$ by Lemma~\ref{lemIsomDiv} and Theorem~\ref{thmAmi}. Note that $v_3 (7 - 1) = 1$ so we have the following infinite number of groups of odd order which can be embedded in division rings. See also \cite[Theorem~6]{Ami}.

\begin{cor}[{\cite[Proposition~4.7]{LiuPas3}}]
\label{cor73k}
The nonabelian group $C_7 \rtimes C_{3^k} = \langle x, y \mid x^7 = y^{3^k} = 1, \, y x y^{-1} = x^2 \rangle$ for $k \geq 2$ are all embeddable in division rings.
\end{cor}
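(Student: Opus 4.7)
The plan is to recognise these groups as a special instance of the framework $G_k$ analysed in this subsection and then to chain Proposition~\ref{propDecNilp2}, Lemma~\ref{lemIsomDiv} and Amitsur's Theorem~\ref{thmAmi}. First, I would set $p = 7$, $q = 3$ and $r_0 = 2$. A direct calculation gives $\ord_7(2) = 3 = q^1$, so $k_0 = 1$ and the element $y^q = y^3$ centralises $x$. This confirms the first requirement of Proposition~\ref{propDecNilp2}.

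Next I would verify the numerical condition in Proposition~\ref{propDecNilp2}(i): since $q = 3 \neq 2$ and $v_3(p-1) = v_3(6) = 1$, the proposition applies and yields that $\qq[G_k]$ has exactly one matrix component for every $k \geq 2$. Tracing the Wedderburn decomposition
\[
\qq[G_k] \simeq \qq[G_k]\widetilde{G_k'} \oplus \bigoplus_{j=k_0}^{k-1} \qq[G_j]\overline{e}_j \oplus \qq[G_k] e_k,
\]
that unique matrix component is the one coming from $G_{k_0} = G_1$, identified by Lemma~\ref{lemDecNilp8}. Consequently every other non-commutative component must be a division ring; in particular $\qq[G_k] e_k$ is a division algebra.

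The last step is to repackage the presentation into Amitsur's form. Using the change of generators $A = x y^{p' q^{k_0}}$, $B = y$ described just before Lemma~\ref{lemIsomDiv}, and choosing $r$ by the Chinese Remainder Theorem so that $r \equiv 2 \pmod{7}$ and $r \equiv 1 \pmod{3^{k-1}}$, one obtains $G_k \simeq G_{m,r}$ with $m = 7 \cdot 3^{k-1}$, $n = 3$, $s = 3^{k-1}$ and $t = 7$. Lemma~\ref{lemIsomDiv} then identifies $\qq[G_k] e_k$ with $\mathfrak{A}_{m,r} = (\qq(\xi_m), \sigma_r, \xi_s)$, which we have just shown is a division algebra. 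Invoking the ``if'' direction of Theorem~\ref{thmAmi} produces the desired embedding $G_k \hookrightarrow \mathfrak{A}_{m,r}$ via $A \leftrightarrow \xi_m$, $B \leftrightarrow \sigma_r$. No genuine obstacle remains: the entire argument is a bookkeeping assembly of tools already developed in this section, with the main conceptual input being the division-ring criterion packaged into Proposition~\ref{propDecNilp2}.
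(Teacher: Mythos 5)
Your proposal is correct and follows essentially the same route as the paper: the remark preceding the corollary applies the criterion of Proposition~\ref{propDecNilp2} (via $k_0=1$ and $v_3(7-1)=1$) to conclude that $\qq[G_k]e_k$ is a division algebra, and then invokes Lemma~\ref{lemIsomDiv} together with the ``if'' direction of Theorem~\ref{thmAmi} to embed $G_k$ in $\mathfrak{A}_{m,r}$. Your only (harmless) deviation is deducing that $\qq[G_k]e_k$ is a division ring from the uniqueness of the matrix component rather than citing Lemma~\ref{lemCriterionDiv} directly, which is an equivalent bookkeeping step.
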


According to \cite[Theorem~6]{Ami}, the group $C_7 \rtimes C_9$ above is a group of smallest possible odd order embedded in a non-commutative division ring. Note also that $v_3(13-1) = 1$. Then we have another infinite number of groups $C_{13} \rtimes C_{3^k} = \langle x, y \mid x^{13} = y^{3^k} = 1, \, y x y^{-1} = x^3 \rangle$ of odd order for $k \geq 2$ embedded in non-commutative division rings. When $k = 2$, the group $C_{13} \rtimes C_{9}$ is the group of next smallest odd order embedded in a division ring, see \cite[(3.7)~Theorem]{Lam2}.


\subsection{Non-solvable \textup{SSN} Groups and ND}$\;$
\label{sec12-3}

According to Theorem~\ref{thmSSN2}, $A_5$ is the unique non-solvable finite SSN group. Note that $(A_4, K)$ is a Shoda pair of the group $A_5$ where $A_4 = \langle (1,2,3), (1,2) (3,4) \rangle$ and $K = \langle (1,2) (3,4), (1,3) (2,4) \rangle$. Let 
$$e = \frac{1}{2} e(A_5, A_4, K) = \frac{1}{2} \sum_{i=0}^4 \varepsilon(A_4, K)^{a^i},$$ 
where $\varepsilon(A_4, K) = \widetilde{K} - \widetilde{A_4}$, 
$a = (1,2,3,4,5)$ and $\varepsilon(A_4, K)^{a^i} = a^i \varepsilon(A_4, K) {a^{-i}}$. 
Then $e$ is a primitive central idempotent of $\qq[A_5]$ (see \cite[Example 3.4.5]{JesRio}). Let 
$$\alpha = \widehat{a} (1,2) (3,4) (1 - a),$$ 
a nonzero nilpotent element of $\zz[A_5]$. We claim that $\alpha e \not\in \zz[A_5]$. We compute the coefficient of $(1,2) (3,4)$ in $\alpha e$. For convenience, let $b = (1,2) (3,4)$ and $c = (1,2,3)$. Compute $\alpha \varepsilon(A_4,K)^{a^i}$ for $i = 0, 1, 2, 3, 4$. Note that $\widehat{a} b \varepsilon(A_4,K)^{a^i}$ always has support $b$ with the coefficient $\frac{1}{4} - \frac{1}{12}$ for each $i$. Moreover, $\widehat{a} b (-a) \varepsilon(A_4,K)^{a^i}$ has $b$ in its support, see the following table.
$$\begin{array}{|c|c|c|c|c|c|} \hline
i & 0 & 1 & 2 & 3 & 4 \\ \hline
b = & a^3 b a b c^2 & a b a (b c^2 b)^a & a^2 b a (c b c^2)^{a^2} & a b a (b c)^{a^3} & a^3 b a c^{a^4} \\ \hline
\text{coeff.} & \frac{1}{12} & \frac{1}{12} & - (\frac{1}{4} - \frac{1}{12}) & \frac{1}{12} & \frac{1}{12} \\ \hline
\end{array}$$
Hence, $\alpha e$ has $b$ in its support and it has  coefficient $$\frac{1}{2} \left(\frac{1}{4} + \frac{1}{4} + 0 + \frac{1}{4} + \frac{1}{4}\right) = \frac{1}{2}.$$ (This can also be checked by \cite{GAP}.) Thus, $\alpha e \not\in \zz[A_5]$. In other words, $A_5$ does not have ND.

It follows from Theorem~\ref{thmSSN2} and the argument above that

\begin{cor}
\label{corDecNilp}
If a group has both \textup{ND} and \textup{SSN}, then it is solvable.
\end{cor}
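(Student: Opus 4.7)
The proof is a short synthesis of the two ingredients immediately preceding the corollary. The plan is to argue by contraposition: assume a finite group $G$ has both \textup{SSN} and \textup{ND}, and show that $G$ must be solvable. Suppose towards contradiction that $G$ is non-solvable; I would then invoke Theorem~\ref{thmSSN2}, which asserts that $A_5$ is the unique non-solvable finite group enjoying \textup{SSN}. Hence $G \simeq A_5$, and it only remains to derive a contradiction with \textup{ND}.

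For this second step I would appeal directly to the computation already carried out in the paragraphs preceding the corollary, which exhibits an explicit witness: the nonzero nilpotent element $\alpha = \widehat{a}\,(1,2)(3,4)\,(1-a) \in \zz[A_5]$ with $a = (1,2,3,4,5)$, together with the primitive central idempotent $e = \tfrac{1}{2} e(A_5, A_4, K)$ constructed from the Shoda pair $(A_4, K)$ with $K = \langle (1,2)(3,4),(1,3)(2,4)\rangle$. The tabulated coefficient computation shows that $(1,2)(3,4)$ appears in $\alpha e$ with coefficient $\tfrac{1}{2}$, so $\alpha e \notin \zz[A_5]$. This contradicts the assumption that $G \simeq A_5$ has \textup{ND}, completing the proof.

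There is no real obstacle here: all the work has been done in Theorem~\ref{thmSSN2} and in the explicit \textup{ND}-failure computation for $A_5$ given just above. The corollary is simply the formal statement one gets by combining these two facts, so the write-up can be reduced to a single short paragraph that cites Theorem~\ref{thmSSN2} to reduce the non-solvable case to $G \simeq A_5$, and then refers to the preceding computation to rule this case out.
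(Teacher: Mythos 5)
Your proposal is correct and follows exactly the paper's own argument: Theorem~\ref{thmSSN2} reduces the non-solvable case to $A_5$, and the explicit computation with $\alpha = \widehat{a}\,(1,2)(3,4)\,(1-a)$ and $e = \tfrac{1}{2}e(A_5,A_4,K)$ shows $A_5$ fails \textup{ND}. Nothing further is needed.
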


\subsection{Second main result: the non-nilpotent case}$\;$

As a consequence of  Theorem~\ref{thmSSN}, Lemma~\ref{lemDecNilp8}, Proposition~\ref{propSSN3}, Proposition~\ref{propDecNilp2} and Corollary~\ref{corDecNilp}, we  get the following result.

\begin{thmA}
\label{thmDecNilp}
Let $G$ be a non-nilpotent \textup{SSN} finite group. Then $\qq[G]$ has only one matrix component if and only if $G$ is one of the following\textup{:}
\begin{enumerate}
\item[(i)]
$G = \langle x \rangle_p \rtimes \langle y \rangle_n$ where $\langle y \rangle_n$ acts faithfully on $\langle x \rangle_p$\textup{;}
\item[(ii)]
$G = C_p^n \rtimes C_q$ where $n \geq 2$, $p^n - 1 = (p-1) q$ and $C_q$ acts nontrivially on $C_p^n$\textup{;}
\item[(iii)]
$G = \langle x \rangle_p \rtimes \langle y \rangle_{q^k}$ where $k \geq 2$, $y$ acts nontrivially on $x$, $y^q$ acts trivially on $x$ and one of the following holds\textup{:}
\begin{enumerate}
\item[(a)]
$q \neq 2$ and $v_q(p-1) = 1$\textup{;}
\item[(b)]
$q = 2$ and either $k = 2$, or $k > 2$ and $p \equiv 5 \pmod{8}$.
\end{enumerate}
\end{enumerate}
Here $p, q$ are distinct primes and $n, k \in \nn$.

In particular, for groups of type \textup{(i)} and \textup{(iii)}, the matrix components are isomorphic to $M_n(\qq(\xi_p)^{\sigma})$ and $M_q(\qq(\xi_p)^{\sigma})$ where $\sigma(\xi_p) = \xi_p^i$ if $x^y = x^i$; for the group of type \textup{(ii)}, it is $M_q(\qq(\xi_p))$. Moreover, rational group algebras of type \textup{(i)} and \textup{(ii)} have only one non-commutative component\textup{;} for type \textup{(iii)}, it has $k-1$ non-commutative components which are division algebras.
\end{thmA}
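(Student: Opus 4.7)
The proof is an assembly of the results from the preceding subsections, and my plan is to collate them according to the dichotomy solvable/non-solvable, and then according to the two types in Theorem~\ref{thmSSN}. First I would dispose of the non-solvable case. By Theorem~\ref{thmSSN2} the only candidate is $A_5$, and by Lemma~\ref{lemOneMatrix} any group whose rational group algebra has at most one matrix component automatically has \ND. Since Subsection~\ref{sec12-3} exhibits an explicit nilpotent element and primitive central idempotent witnessing that $A_5$ does not have \ND, $\qq[A_5]$ must have more than one matrix component, so $A_5$ does not appear in the statement. Hence $G$ is solvable and falls under Theorem~\ref{thmSSN}(i) or (ii).

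Next I would treat Theorem~\ref{thmSSN}(i): $G = P \rtimes Q$ with $P$ elementary abelian $p$-group and $Q$ cyclic $p'$-group acting faithfully. Here I split on whether $P$ is cyclic or not. If $P$ is cyclic, Lemma~\ref{lemDecNilp8} gives directly that $\qq[G]$ has a unique matrix component, namely $M_{|Q|}(\qq(\xi_p)^\sigma)$ with $\sigma(\xi_p)=\xi_p^i$ for $g^h=g^i$; this is exactly type (i) of the present theorem, and there is no arithmetic restriction. If $P$ is non-cyclic, Lemma~\ref{lemSSN2} yields
\[
\qq[G] \simeq \qq[Q] \oplus v\,M_{|Q|}(\qq(\xi_p)), \qquad v=\frac{|P|-1}{(p-1)|Q|},
\]
so the one-matrix-component condition is equivalent to $v=1$, i.e.\ $|P|-1=(p-1)|Q|$, which forces $|Q|$ prime (via the argument of Proposition~\ref{propDecNilp7}). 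The converse — that every $G=C_p^n\rtimes C_q$ with $p^n-1=(p-1)q$ and nontrivial action is automatically SSN — is supplied by Lemma~\ref{lemSSN3}; this gives exactly type (ii) of the theorem.

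For the remaining case of Theorem~\ref{thmSSN}(ii) — non-faithful action of a cyclic $q$-group on $C_p$ — all the work has already been carried out in Subsection~\ref{sec12-4}. Using the telescoping decomposition
\[
\qq[G_k] \simeq \qq[G_k]\widetilde{G_k'} \oplus \bigoplus_{j=k_0}^{k-1} \qq[G_j]\overline{e}_j \oplus \qq[G_k]\,e_k,
\]
one reduces one-matrix-component to the condition that each $\qq[G_j]e_j$ with $k_0 < j \le k$ is a division algebra, and the Amitsur criterion (Theorem~\ref{thmAmi2}) via Lemma~\ref{lemCriterionDiv} translates this into the $q$-adic arithmetic constraints of type (iii). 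This is precisely the content of Proposition~\ref{propDecNilp2}, and the description of the unique matrix component is read off from Lemma~\ref{lemDecNilp8} applied to the quotient $G_1$.

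The main obstacle has already been faced in the preceding subsections; at the level of this theorem the only subtlety is organisational, namely making sure that in the non-cyclic subcase of Theorem~\ref{thmSSN}(i) the SSN condition and the single-matrix-component condition together give exactly the arithmetically described family $C_p^n\rtimes C_q$ with $p^n-1=(p-1)q$, and that the resulting groups are nonempty — both delivered by Lemma~\ref{lemSSN3} and the examples in Remark~\ref{remkDecNilp}. The descriptions of the matrix components in each case follow directly from Lemma~\ref{lemDecNilp8}, Proposition~\ref{propSSN3}, and Proposition~\ref{propDecNilp2} without further computation.
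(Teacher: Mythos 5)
Your assembly is correct and is essentially the paper's own proof, which obtains the theorem as a direct consequence of Theorem~\ref{thmSSN}, Lemma~\ref{lemDecNilp8}, Proposition~\ref{propSSN3} (itself the combination of Lemma~\ref{lemSSN2}, Proposition~\ref{propDecNilp7} and Lemma~\ref{lemSSN3}), Proposition~\ref{propDecNilp2} and Corollary~\ref{corDecNilp}. If anything, you are slightly more explicit than the paper in using Lemma~\ref{lemOneMatrix} to pass from ``only one matrix component'' to \textup{ND} before invoking the \textup{ND}-based results (the exclusion of $A_5$ and the primality of $|Q|$ in the non-cyclic case), which is exactly the bridge the paper leaves implicit.
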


We present a conjecture below based on Corollary~\ref{corDecNilpNilp} and Corollary~\ref{corDecNilpNon-nilp}.

\begin{conjecture}
\label{conjND}
Let $G$ be a finite group such that $\qq[G]$ has some nonzero nilpotent elements. If $G$ has ND, then $\qq[G]$ has only one matrix component.
\end{conjecture}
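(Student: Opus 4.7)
The plan is to argue by contradiction on a minimal counterexample $G$: suppose $G$ has ND, $\qq[G]$ has nonzero nilpotent elements and at least two matrix components, and $|G|$ is minimal with these properties. Proposition~\ref{propDecNilp} already forces $G$ to be SN, and Lemma~\ref{lemDecNilp} says any subgroup $H \leq G$ meeting some nontrivial normal subgroup $N$ trivially must have $\qq[H]$ without nonzero nilpotents (so $H$ is abelian or one of the special Dedekind groups of Lemma~\ref{lemOneMatrix}). My first step would be to reduce to the solvable case: the explicit nilpotent $\widehat{a}(1,2)(3,4)(1-a)$ used in Section~\ref{sec12-3} to kill ND for $A_5$ should admit a generalisation to any $G$ with a non-abelian simple subquotient, using that such a subquotient forces a matrix component whose character structure is rigid enough to supply an analogous obstruction.

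Next I would try to descend to quotients. Although ND is not generally inherited by $G/N$, a partial descent should still be available: if $\alpha \in \zz[G]$ is nilpotent then its image $\overline\alpha$ in $\zz[G/N]$ is also nilpotent, and the primitive central idempotents $\bar e$ of $\qq[G/N]$ correspond bijectively to the primitive central idempotents $e$ of $\qq[G]$ satisfying $e \widetilde{N} = e$. Asking ND only at those idempotents gives a weaker property that does descend to $G/N$, and the hope is that this weakened property together with minimality forces $\qq[G/N]$ to have at most one matrix component for every nontrivial $N$. Combined with SN, which forces every nontrivial normal subgroup to be large, this should pin $G$ down to a tightly constrained metabelian shape so that Theorem~\ref{thmMetabelian} and Proposition~\ref{propSShodaPair} yield an explicit Shoda-pair description of every primitive central idempotent.

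The heart of the proof, and its main obstacle, is to exhibit explicit nilpotent $\alpha \in \zz[G]$ and a matrix-component idempotent $e_i = e(G,H_i,K_i)$ with $\alpha e_i \notin \zz[G]$. The templates to imitate are Example~\ref{exmpSSN3}, the proofs of Lemma~\ref{lemDecNilpBJ1-2} and Lemma~\ref{lemDecNilpBJ3}, and the $A_5$ computation of Section~\ref{sec12-3}: one takes $\alpha$ of the form $(1-y)g\widehat{Y}$ (or $\widehat{Y}g(1-y)$, or an iterated variant) and shows by a direct coefficient calculation that $\alpha e_i$ has a non-integer coefficient on some group element. Given two distinct matrix components with strong Shoda pairs $(H_1,K_1)$ and $(H_2,K_2)$, the idea is to use the SN structure and the failure of orthogonality between certain $G$-conjugates of $\varepsilon(H_1,K_1)$ and $\varepsilon(H_2,K_2)$ to locate an element $y$ and a subgroup $Y$ whose associated $\alpha$ projects badly onto $e_1$. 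The principal difficulty is that, absent a Liu--Passman-style classification of SN groups with more than one matrix component, there is no finite list to verify, so a uniform orthogonality-based construction is required; producing such a construction (or, alternatively, proving by new means that ND together with the presence of two matrix components forces SSN so that Theorem~\ref{thmDecNilp2} and Theorem~\ref{thmDecNilp} apply directly) is in my view the crux of the conjecture.
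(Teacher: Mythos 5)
The statement you are addressing is Conjecture~\ref{conjND}: the paper does not prove it, but poses it as an open problem motivated by Corollary~\ref{corDecNilpNilp} and Corollary~\ref{corDecNilpNon-nilp}, so there is no proof in the paper to compare yours with. What you have written is a programme rather than a proof, and you say so yourself: the step you call the crux --- a uniform construction, for an arbitrary group $G$ with \textup{ND} whose rational group algebra has at least two matrix components, of a nilpotent $\alpha \in \zz[G]$ and a matrix-component idempotent $e$ with $\alpha e \notin \zz[G]$ --- is essentially the whole content of the conjecture, and nothing in the proposal supplies it.

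Beyond that acknowledged gap, two of your intermediate reductions do not work as stated. The descent to quotients is circular: the property that passes along $\zz[G] \to \zz[G/N]$ (using $\qq[G/N] \simeq \qq[G]\widetilde{N}$ and the bijection of idempotents you describe) concerns only those nilpotents of $\zz[G/N]$ that are images of nilpotents of $\zz[G]$; as recalled in Section~\ref{secII2}, nilpotent elements cannot in general be lifted along a group epimorphism, so $G/N$ need not inherit \textup{ND}, and since your weakened property is not the hypothesis of the conjecture, minimality of $|G|$ tells you nothing about $G/N$. The reduction to solvable groups is likewise only a hope: the $A_5$ obstruction in Section~\ref{sec12-3} rests on an explicit Shoda pair and coefficient computation inside $\zz[A_5]$, \textup{ND} does not pass to subquotients, and only SSN groups (not SN groups) have been classified, so there is no finite list and no evident way to transport the obstruction into $\zz[G]$ from a simple subquotient. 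Finally, your fallback --- proving that \textup{ND} together with two matrix components forces SSN ``so that Theorem~\ref{thmDecNilp2} and Theorem~\ref{thmDecNilp} apply directly'' --- would not close the conjecture either: those theorems classify the SSN groups whose rational group algebra has exactly one matrix component, but the paper explicitly leaves open whether \textup{ND} forces a single matrix component for SSN groups of type (BJ1) (see the sentence following Corollary~\ref{corDecNilpNilp}) and for the groups of Theorem~\ref{thmSSN}(ii) with more than one matrix component (Remark~\ref{remkSSNiiND}), so even granting SSN the conjecture would remain unresolved.
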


\section{Concluding Remark}

Let $G$ be a finite group. Denote by $\{e_i\}_{i \in I}$ the collection of primitive central idempotents of $\qq[G]$. If $\alpha \in \zz[G]$ is nilpotent, then $\alpha = \sum_i \alpha_i$ where $\alpha_i = \alpha e_i$. It follows that $\alpha_i \alpha_j = \alpha_j \alpha_i = 0$ for $i \neq j$. Thus, $1 + \alpha = \prod_{i \in I} (1 + \alpha_i)$. In other words, a unipotent unit of $\zz[G]$ is a product of unipotent units of $\qq[G]$ determined by nilpotents in the Wedderburn components of $\qq[G]$. For each $i$, there exists $n_i \in \nn$ such that $(1 + \alpha_i)^{n_i} \in \zz[G]$ (for example if $\alpha_i^{k+1} = 0$ and $m \alpha_i \in \zz[G]$ then we can choose $n_i = k! m^k$ such that $\binom{n_i}{j} \alpha_i^j \in \zz[G]$ for $j = 1, \ldots, k$). Moreover, $(1+\alpha_i)^{n_i}$ is still a unipotent unit. 
So we let $n = \prod_i n_i$ and then $(1+\alpha)^n$ is a product of unipotent units of $\zz[G]$ determined by nilpotent elements in the Wedderburn components of $\qq[G]$. 

Let $\mathcal{U}(G)$ be the subgroup of $\mathcal{U}(\zz[G])$ generated by unipotent units. Denote $p\mathcal{U}(G)$ the subgroup of $\mathcal{U}(\zz[G])$ generated by {\it primitive unipotent units}, namely, units of the form $1 + \beta$ where $\beta \in \zz[G]$ is nilpotent such that $\beta = \beta e_i$ for some $i \in I$. Then $p\mathcal{U}(G)$ is a normal subgroup of $\mathcal{U}(G)$. The above discussion shows that $\mathcal{U}(G) / p\mathcal{U}(G)$ is generated by torsion images of unipotents. Clearly, if $\zz[G]$ has ND, then $\mathcal{U}(G) / p\mathcal{U}(G)$ is a trivial group. Conversely, if $\mathcal{U}(G) = p\mathcal{U}(G)$ and $\alpha \in \zz[G]$ is nilpotent, then $1 + \alpha = \prod_{i \in I} (1 + \gamma_i)$ where each factor $1 + \gamma_i$ is a product of primitive unipotent units of the form $1 + \beta$ with $\beta \in \zz[G]$ and $\beta e_i = \beta$. Observe that $1 + \alpha = 1 + \sum_{i \in I} \gamma_i$. 
Hence, we have the following statement.

\begin{thm*}
For a finite group $G$, $\zz[G]$ has \textup{ND} if and only if $\mathcal{U}(G) / p\mathcal{U}(G)$ is a trivial group.
\end{thm*}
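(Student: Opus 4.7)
The plan is to verify both directions essentially from the discussion preceding the statement, the only real content being the reverse implication, where the key mechanism is the orthogonality of the primitive central idempotents.

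For the forward implication, suppose $\zz[G]$ has ND. Given any unipotent unit $1 + \alpha \in \mathcal{U}(\zz[G])$ with $\alpha$ nilpotent, write $\alpha_i = \alpha e_i$, which by hypothesis lies in $\zz[G]$. Since $e_i e_j = 0$ for $i \neq j$ we have $\alpha_i \alpha_j = 0$, and so the product $\prod_{i \in I}(1 + \alpha_i)$ telescopes to $1 + \sum_i \alpha_i = 1 + \alpha$. Each factor $1 + \alpha_i$ is a primitive unipotent unit. Since $\mathcal{U}(G)$ is generated by unipotent units, this shows $\mathcal{U}(G) \subseteq p\mathcal{U}(G)$, hence equality.

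For the reverse implication, assume $\mathcal{U}(G) = p\mathcal{U}(G)$ and let $\alpha \in \zz[G]$ be nilpotent. Then $1 + \alpha \in \mathcal{U}(G)$, so by hypothesis we may write
\[
1 + \alpha = \prod_{j=1}^N (1 + \beta_j)^{\epsilon_j}, \qquad \epsilon_j \in \{+1,-1\},
\]
with each $\beta_j \in \zz[G]$ nilpotent and $\beta_j = \beta_j e_{i(j)}$ for some index $i(j) \in I$. Since inverses of primitive unipotent units are again primitive unipotent units supported on the same $e_i$ (if $\beta^{k+1} = 0$ then $(1+\beta)^{-1} = 1 + \beta'$ with $\beta' = -\beta + \beta^2 - \cdots + (-1)^k \beta^k$, which is nilpotent and supported on $e_{i(j)}$), we may assume all exponents are $+1$. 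Crucially, factors from different components commute, because $(1 + \beta_j)$ and $(1 + \beta_{j'})$ with $i(j) \neq i(j')$ multiply as $1 + \beta_j + \beta_{j'}$ (the cross term vanishes by orthogonality). Hence we may reorder and regroup the product according to the index $i$, obtaining
\[
1 + \alpha = \prod_{i \in I}(1 + \gamma_i),
\]
where each $1 + \gamma_i$ is the partial product of those factors $1 + \beta_j$ with $i(j) = i$. Each $\gamma_i \in \zz[G]$ satisfies $\gamma_i = \gamma_i e_i$. Applying orthogonality once more, $\gamma_i \gamma_{i'} = 0$ for $i \neq i'$, so
\[
1 + \alpha = 1 + \sum_{i \in I} \gamma_i.
\]
Comparing gives $\alpha = \sum_i \gamma_i$, and multiplying by $e_k$ yields $\alpha e_k = \gamma_k \in \zz[G]$. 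Thus $\zz[G]$ has ND.

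The only step that requires a little care is justifying that one may assume all exponents $\epsilon_j$ are $+1$ and that one may reorder the factors; both reduce to the fact that $(1+\beta_j)^{-1}$ remains a primitive unipotent unit supported on the same $e_{i(j)}$ (via the nilpotent expansion of the inverse) and that elements from different components annihilate each other. I would expect no other obstacle.
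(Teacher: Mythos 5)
Your proof is correct and follows essentially the same route as the paper: the forward direction uses $1+\alpha=\prod_i(1+\alpha e_i)$ with orthogonal components, and the converse regroups a product of primitive unipotent units by component and compares $1+\alpha$ with $1+\sum_i\gamma_i$. Your explicit justification that inverses of primitive unipotent units stay primitive and that factors from distinct components commute merely spells out a regrouping step the paper leaves implicit.
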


\begin{prob}
\label{prob10}
Classify finite groups $G$ such that $\mathcal{U}(G) / p\mathcal{U}(G)$ is a finite group.
\end{prob}

Based on the study in this article, the ND property relates to the number of matrix components. In general, we can ask

\begin{prob}
\label{ques2}
What is the connection between the group $\mathcal{U}(G) / p\mathcal{U}(G)$ and matrix components of $\qq[G]$?
\end{prob}

Finally, we denote by OMC the class of  rational group algebras having at most one matrix component. We finish by giving the following Figure~\ref{figure1} which shows the relations between MJD, ND, SN, SSN and OMC. Note that the SSN groups are not a subclass of the ND groups, as shown for example by the group  $C_3 \rtimes C_8$ (see Remark~\ref{remkSN}(ii)). So, in Figure~\ref{figure1}, the diagonal implication from SSN to ND does not hold.

\begin{figure}[h] 
\begin{center}
\begin{tikzpicture}[scale=1]
\scriptsize
\matrix (m) [matrix of math nodes,row sep=12em,column sep=10em,minimum width=5em, line width=1em]{
\textup{\large MJD} &   & \textup{\large SSN} \\
  & \textup{\large OMC} &   \\
\textup{\large ND} & & \textup{\large SN} \\
};

\path[-stealth]
([yshift=2ex]m-1-3.west) edge [line width=1pt,dotted] 
    node [above,yshift=1.0ex] {\large$\zz[A_4]$} 
    node {\large$\times$} 
([yshift=2ex]m-1-1.east)
([yshift=-3pt]m-1-1.east) edge [line width=1pt]
    node {} 
([yshift=-3pt]m-1-3.west)
([xshift=3pt,yshift=-3pt]m-1-1.south) edge [line width=1pt]
    node{} 
([xshift=3pt,yshift=3pt]m-3-1.north)
([xshift=-2ex,yshift=3pt]m-3-1.north) edge [line width=1pt,dotted]
    node {\large$\times$}
    node [left,xshift=-1.0ex] {\large$\zz[A_4]$} 
([xshift=-2ex,yshift=-3pt]m-1-1.south)
([xshift=-3pt,yshift=-3pt]m-1-3.south) edge [line width=1pt]
    node{} 
([xshift=-3pt,yshift=3pt]m-3-3.north)
([xshift=2ex,yshift=3pt]m-3-3.north) edge [line width=1pt,dotted]
    node {\large$\times$}
([xshift=2ex,yshift=-3pt]m-1-3.south)
([xshift=5ex]m-1-1.south) edge [line width=1pt,dotted]
    node [above] {\Large\textup{?}} 
([xshift=-2ex]m-2-2.north)
([xshift=1ex,yshift=1ex]m-2-2.west) edge [line width=1pt,bend left,dotted]
    node [xshift=1ex] {\large$\times$}
    node [left,xshift=0.5ex,yshift=-2.5ex] {\large$\zz[A_4]$}
([xshift=3ex]m-1-1.south)
([xshift=-2ex]m-2-2.south) edge [line width=1pt,bend left] node {} ([xshift=5ex,yshift=-1ex]m-3-1.north)
([xshift=3ex]m-3-1.north) edge [line width=1pt,dotted]
    node [above] {\Large\textup{?}}
([xshift=2ex,yshift=-2ex]m-2-2.west)
([xshift=3ex]m-2-2.south) edge [line width=1pt] node {} ([xshift=-5ex]m-3-3.north)
([xshift=-3ex,yshift=0.5ex]m-3-3.north) edge [line width=1pt,bend right,dotted]
    node [xshift=1ex] {\large$\times$}
([xshift=-1ex,yshift=-1.5ex]m-2-2.east)
([xshift=-5ex]m-1-3.south) edge [line width=1pt,bend right,dotted]
    node [left,xshift=-3pt,yshift=5pt] {\large$\zz[C_3 \rtimes C_8]$} 
    node [xshift=1pt,yshift=3pt] {\large$\times$}
([xshift=2ex]m-2-2.north)
([xshift=-1ex,yshift=2ex]m-2-2.east) edge [line width=1pt,dotted]
    node [xshift=1ex] {\large$\times$}
    node [xshift=5.5ex,yshift=-5.5ex] {\large$\zz[D_8 \Ydown D_8]$} 
([xshift=-2.5ex]m-1-3.south)
([yshift=3pt]m-3-1.east) edge [line width=1pt]
    node{} 
([yshift=3pt]m-3-3.west)
([yshift=-2ex]m-3-3.west) edge [line width=1pt,dotted] 
    node {\large$\times$} 
    node [below,yshift=-1.0ex] {\large$\zz[C_3 \rtimes C_8]$} 
([yshift=-2ex]m-3-1.east)
;            
\end{tikzpicture}
\caption{Relations between MJD, ND, SN, SSN, OMC}
\label{figure1}
\end{center}
\end{figure}
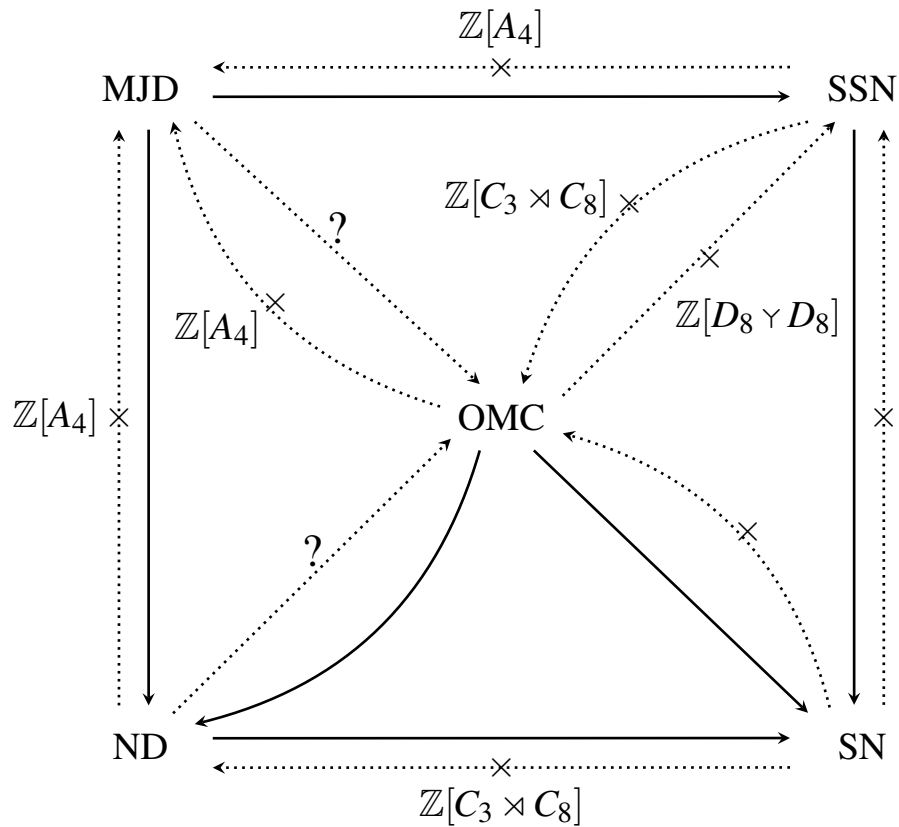

\section*{Acknowledgment}

The first author is supported in part by Onderzoeksraad VUB and FWO (Research Foundation Flanders).
The material in this paper is part of the second author's Ph.D. dissertation written at National Taiwan Normal University. The research of the second author was financially supported by the National Taiwan Normal University (NTNU) within the framework of the Higher Education Sprout Project by the Ministry of Education (MOE) in Taiwan. The second author would like to thank his advisor, Professor Chia-Hsin Liu, for helpful discussions and continuous encouragement. The authors would like to thank the referee for several helpful comments and some suggestions to simplify the proofs of some lemmas.

\bibliographystyle{alphaurl} 
\bibliography{Bib} 

\begin{thebibliography}{OdRS04}

\bibitem[Ami55]{Ami}
S.A. Amitsur.
\newblock Finite subgroups of division rings.
\newblock {\em Trans. Amer. Math. Soc.}, 80:361--386, 1955.
\newblock \href {https://doi.org/10.1090/S0002-9947-1955-0074393-9}
  {\path{doi:10.1090/S0002-9947-1955-0074393-9}}.

\bibitem[BJ06]{BerJan}
Y.~Berkovich and Z.~Janko.
\newblock Structure of finite $p$-groups with given subgroups.
\newblock In {\em Ischia Group Theory 2004}, volume 402 of {\em Contemp.
  Math.}, pages 13--93. Amer. Math. Soc., Providence, RI, 2006.
\newblock \href {https://doi.org/10.1090/conm/402}
  {\path{doi:10.1090/conm/402}}.

\bibitem[BJ09]{BozJan}
Z.~Bo\v{z}ikov and Z.~Janko.
\newblock {A complete classification of finite $p$-groups all of whose
  noncyclic subgroups are normal}.
\newblock {\em Glas. Mat. Ser. III}, 44(64)(1):177--185, 2009.
\newblock \href {https://doi.org/10.3336/gm.44.1.10}
  {\path{doi:10.3336/gm.44.1.10}}.

\bibitem[Dub93]{Dub}
H.~Dubner.
\newblock Generalized repunit primes.
\newblock {\em Math. Comp.}, 61(204):927--930, 1993.
\newblock \href {https://doi.org/10.1090/S0025-5718-1993-1185243-9}
  {\path{doi:10.1090/S0025-5718-1993-1185243-9}}.

\bibitem[GAP20]{GAP}
The GAP~Group.
\newblock {\em {GAP -- Groups, Algorithms, and Programming, Version 4.11.0}},
  2020.
\newblock URL: \url{https://www.gap-system.org}.

\bibitem[{Gir}06]{Gir}
C.R. {Giraldo Vergara}.
\newblock Wedderburn decomposition of small rational group algebras.
\newblock In {\em Groups, Rings and Group Rings}, volume 248 of {\em Lect.
  Notes Pure Appl. Math.}, pages 191--200. Chapman \& Hall/CRC, Boca Raton, FL,
  2006.
\newblock \href {https://doi.org/10.1201/9781420010961}
  {\path{doi:10.1201/9781420010961}}.

\bibitem[Gor80]{Gor}
D.~Gorenstein.
\newblock {\em Finite Groups}.
\newblock Chelsea Publishing Co., New York, second edition, 1980.

\bibitem[GS95]{GiaSeh}
A.~Giambruno and S.K. Sehgal.
\newblock Generators of large subgroups of units of integral group rings of
  nilpotent groups.
\newblock {\em J. Algebra}, 174(1):150--156, 1995.
\newblock \href {https://doi.org/10.1006/jabr.1995.1121}
  {\path{doi:10.1006/jabr.1995.1121}}.

\bibitem[HP17]{HalPas2}
A.W. Hales and I.B.S. Passi.
\newblock Group rings and {J}ordan decomposition.
\newblock In {\em Groups, Rings, Group Rings, and Hopf Algebras}, volume 688 of
  {\em Contemp. Math.}, pages 103--111, Providence, RI, 2017. Amer. Math. Soc.
\newblock \href {https://doi.org/10.1090/conm/688/13829}
  {\path{doi:10.1090/conm/688/13829}}.

\bibitem[HPW07]{HalPasWil}
A.W. Hales, I.B.S. Passi, and L.E. Wilson.
\newblock {The multiplicative {J}ordan decomposition in group rings, II}.
\newblock {\em J. Algebra}, 316(1):109--132, 2007.
\newblock \href {https://doi.org/10.1016/j.jalgebra.2007.07.009}
  {\path{doi:10.1016/j.jalgebra.2007.07.009}}.

\bibitem[JdR16]{JesRio}
E.~Jespers and {\'{A}}.~del R{\'{i}}o.
\newblock {\em Group Ring Groups}, volume~1.
\newblock De Gruyter, Berlin, 2016.
\newblock \href {https://doi.org/10.1515/9783110372946}
  {\path{doi:10.1515/9783110372946}}.

\bibitem[Lam01]{Lam2}
T.Y. Lam.
\newblock Finite groups embeddable in division rings.
\newblock {\em Proc. Amer. Math. Soc.}, 129(11):3161--3166, 2001.
\newblock \href {https://doi.org/10.1090/S0002-9939-01-05961-5}
  {\path{doi:10.1090/S0002-9939-01-05961-5}}.

\bibitem[Liu12]{Liu}
C.-H. Liu.
\newblock {Multiplicative Jordan decomposition in group rings and $p$-groups
  with all noncyclic subgroups normal}.
\newblock {\em J. Algebra}, 371:300--313, 2012.
\newblock \href {https://doi.org/10.1016/j.jalgebra.2012.07.010}
  {\path{doi:10.1016/j.jalgebra.2012.07.010}}.

\bibitem[LP09]{LiuPas}
C.-H. Liu and D.S. Passman.
\newblock {Multiplicative Jordan decomposition in group rings of $3$-groups}.
\newblock {\em J. Algebra Appl.}, 8(4):505--519, 2009.
\newblock \href {https://doi.org/10.1142/S0219498809003461}
  {\path{doi:10.1142/S0219498809003461}}.

\bibitem[LP10]{LiuPas2}
C.-H. Liu and D.S. Passman.
\newblock {Multiplicative Jordan decomposition in group rings of $2,3$-groups}.
\newblock {\em J. Algebra Appl.}, 9(3):483--492, 2010.
\newblock \href {https://doi.org/10.1142/S0219498810004026}
  {\path{doi:10.1142/S0219498810004026}}.

\bibitem[LP13]{LiuPas3}
C.-H. Liu and D.S. Passman.
\newblock {Multiplicative Jordan decomposition in group rings with a Wedderburn
  component of degree $3$}.
\newblock {\em J. Algebra}, 388:203--218, 2013.
\newblock \href {https://doi.org/10.1016/j.jalgebra.2013.04.015}
  {\path{doi:10.1016/j.jalgebra.2013.04.015}}.

\bibitem[LP14]{LiuPas4}
C.-H. Liu and D.S. Passman.
\newblock {Multiplicative Jordan decomposition in group rings of $3$-groups,
  II}.
\newblock {\em Comm. Algebra}, 42(6):2633--2639, 2014.
\newblock \href {https://doi.org/10.1080/00927872.2013.766828}
  {\path{doi:10.1080/00927872.2013.766828}}.

\bibitem[LP16]{LiuPas5}
C.-H. Liu and D.S. Passman.
\newblock {Groups with certain normality conditions}.
\newblock {\em Comm. Algebra}, 44(8):3308--3323, 2016.
\newblock \href {https://doi.org/10.1080/00927872.2015.1044104}
  {\path{doi:10.1080/00927872.2015.1044104}}.

\bibitem[OdRS04]{OliRioSim}
A.~Olivieri, {\'{A}}.~del R{\'{i}}o, and J.J. Sim{\'{o}}n.
\newblock On monomial characters and central idempotents of rational group
  algebras.
\newblock {\em Comm. Algebra}, 32(4):1531--1550, 2004.
\newblock \href {https://doi.org/10.1081/AGB-120028797}
  {\path{doi:10.1081/AGB-120028797}}.

\bibitem[Pas86]{Pas2}
D.S. Passman.
\newblock {\em Group Rings, Crossed Products and Galois Theory}, volume~64 of
  {\em CBMS Regional Conference Series in Mathematics}.
\newblock American Mathematical Society, Providence, 1986.
\newblock \href {https://doi.org/10.1090/cbms/064}
  {\path{doi:10.1090/cbms/064}}.

\bibitem[Roq58]{Roq}
P.~Roquette.
\newblock {Realisierung von Darstellungen endlicher nilpotenter Gruppen}.
\newblock {\em Arch. Math.}, 9:241--250, 1958.
\newblock \href {https://doi.org/10.1007/BF01900587}
  {\path{doi:10.1007/BF01900587}}.

\end{thebibliography}

\end{document}